\def\qed{\hfill \mbox{\rule{0.5em}{0.5em}}}
\newcommand{\be}{\begin{equation}}
\newcommand{\ee}{\end{equation}}
\newcommand{\bes}{\begin{equation*}}
\newcommand{\ees}{\end{equation*}}
\newcommand{\ba}{\begin{aligned}}
\newcommand{\ea}{\end{aligned}}
\newcommand{\bi}{\begin{itemize}}
\newcommand{\ei}{\end{itemize}}
\newcommand{\EE}{\mathbb{E}}
\numberwithin{equation}{section}
\newtheorem{assumption}{Assumption}[section]
\newtheorem{theorem}{Theorem}%[section]
\newtheorem{proposition}{Proposition}%[section]
\newtheorem{lemma}{Lemma}%[section]
\title{Parameter Estimation in Recurrent Tumor Evolution with Finite Carrying Capacity}
\author{Kevin Leder$^{1}$ \and\hspace*{-6pt}  Zicheng Wang$^{2}$ \and\hspace*{-6pt}Xuanming Zhang$^{1}$}
\date{%
    \footnotesize $^1$Department of Industrial and Systems Engineering, University of Minnesota, Twin Cities, MN 55455, USA. \\[2pt]
    $^2$School of Data Science, The Chinese University of Hong Kong, Shenzhen (CUHK-Shenzhen), China
}
\begin{document}
\maketitle
\begin{abstract}
In this work, we investigate the population dynamics of tumor cells under therapeutic pressure. Although drug treatment initially induces a reduction in tumor burden, treatment failure frequently occurs over time due to the emergence of drug resistance, ultimately leading to cancer recurrence. To model this process, we employ a two-type branching process with state-dependent growth rates. The model assumes an initial tumor population composed predominantly of drug-sensitive cells, with a small subpopulation of resistant cells. Sensitive cells may acquire resistance through mutation, which is coupled to a change in cellular fitness. Furthermore, the growth rates of resistant cells are modulated by the overall tumor burden. Using stochastic differential equation techniques, we establish a functional law of large numbers for the scaled populations of sensitive cells, resistant cells, and the initial resistant clone. We then define the stochastic recurrence time as the first time the total tumor population regrows to its initial size following treatment. For this recurrence time, as well as for measures of clonal diversity and the size of the largest resistant clone at recurrence, we derive corresponding law of large number limits. These asymptotic results provide a theoretical foundation for constructing statistically consistent estimators for key biological parameters, including the cellular growth rates, the mutation rate, and the initial fraction of resistant cells.
% In this work, we study the dynamics of a tumor cell population under pharmacological treatment. While drug treatment initially reduces tumor size, it often fails over time as tumor cells develop resistance, ultimately leading to cancer recurrence. We model this process using a two-type, state-dependent branching process, \textcolor{magenta}{assuming the tumor initially consists predominantly of drug-sensitive cells and a small fraction of resistant cells.} Drug-sensitive cells may acquire resistance through mutation, accompanied by fitness changes, while the overall tumor burden influences cellular growth rates. Using stochastic differential equation techniques, we establish a functional law of large numbers for the populations of resistant and sensitive cells as well as the initial resistant clone. Additionally, by defining the stochastic time of cancer recurrence—when the tumor regrows to its initial size—we derive law of large number limits for recurrence time, clonal diversity, and the size of the largest clone. These results enable the construction of consistent estimators for key parameters, such as cell growth rate, mutation rate, and the initial fraction of resistant cells.
% \\

{\bf Keywords:}   Stochastic process; Parameter estimation; Tumor evolution; Carrying capacity
\end{abstract}

\section{Introduction}
Despite substantial advances in cancer therapy, including chemotherapy, immunotherapy, and radiotherapy, initial antitumor responses are often transient, and disease relapse remains a common and formidable challenge. For example, in glioblastoma, the vast majority of patients experience relapse, with approximately 90\% recurring within two years and a median progression-free survival of only $\sim$7 months under contemporary care \cite{rogers2024re,vaz2023recurrent}. Similarly, in advanced epithelial ovarian cancer, around 85\% of cases recur within a decade \cite{perez2024seom}. Mechanistically, relapse is primarily driven by minimal residual disease that evades therapeutic elimination through intrinsic or acquired resistance. This adaptive process is underpinned by Darwinian selection of pre-existing resistant subclones alongside therapy-induced adaptations, such as genetic mutations and phenotypic plasticity \cite{johnson2014mutational,samur2023high,youk2024quantitative,zhang2012activation,beltran2019role}. Consequently, recurrent tumors exhibit pronounced intratumor heterogeneity at genomic, transcriptomic, and phenotypic levels \cite{ding2012clonal,wang2022single,simpson2025challenges,hobor2024mixed}. This intratumor heterogeneity substantially complicates the development of effective subsequent treatments \cite{dagogo2018tumour,raatz2021impact}, highlighting the critical need to understand the dynamics of relapse.

While intratumor heterogeneity in recurrent tumors undermines therapeutic durability, it simultaneously encodes valuable information on tumor evolution. This information presents an opportunity to infer key evolutionary parameters from genomic data using mathematical and computational frameworks.
%%%
A growing body of literature seeks to harness this opportunity. Building on branching-process models, Leder and colleagues \cite{leder2024parameter, leder2021clonal} analyzed the Simpson index (a measure of diversity based on the second moment of subclone-size distributions) to estimate tumor growth and mutation rates from single-time-point sequencing data of recurrent tumors. In a related approach, Gunnarsson et al. \cite{gunnarsson2025limit} examined the site-frequency spectrum of neutral mutations in exponentially growing populations and, through limit theorems, derives estimators for mutation rates and extinction probabilities. Williams et al. \cite{williams2018quantification} employed a branching process framework to model variant allele frequencies in bulk sequencing data, enabling the quantification of subclonal selection, relative fitness, and the timing of subclone emergence.
% There is a body of literature on this direction.
% Building on branching-process models, \cite{leder2024parameter} and \cite{leder2021clonal} analyze the Simpson Index—the second moment of subclone-size distributions—to estimate tumor growth and mutation rates from single-patient, single-time-point sequencing data, particularly in recurrent tumors. Along similar lines, \cite{gunnarsson2025limit} examines the site-frequency spectrum (SFS) of neutral mutations in exponentially growing populations and, via limit theorems, derives estimators for mutation rates and extinction probabilities. Likewise, \cite{williams2018quantification} employs a branching-process framework to model variant allele frequencies in bulk sequencing data, enabling quantification of subclonal selection, relative fitness, and the timing of subclone emergence. 
In another direction, cloneRate \cite{johnson2023clonerate} leveraged coalescent theory to analyze the distribution of shared mutations (those present in more than one but not all cells). This enables the rapid estimation of single-cell clonal growth rates and dynamics. Saleh et al. \cite{salehi2021clonal} introduced fitClone, which applies a diffusion approximation to the $K$-allele Wright--Fisher model with selection. By utilizing longitudinal measurements of clonal abundances from single-cell whole-genome sequencing, the method generates posterior probability densities for fitness values, thereby mapping clonal fitness landscapes over time. Collectively, these studies demonstrate how heterogeneity can be harnessed as a quantitative signal for inferring tumor evolutionary dynamics. However, a common limitation among these studies is the assumption of constant cellular growth rates, independent of the tumor microenvironment, which constrains their biological interpretability.

% In another direction, cloneRate \cite{johnson2023clonerate} leverages coalescent theory to analyze the distribution of shared mutations—those present in more than one but not all cells—facilitating rapid estimation of single-cell clonal growth rates and dynamics. Finally, \cite{salehi2021clonal} introduces fitClone, which applies a diffusion approximation to the $K$-allele Wright--Fisher model with selection, using longitudinal measurements of clonal abundances from single-cell whole-genome sequencing to generate posterior probability densities for fitness values and thereby map clonal fitness landscapes over time. Collectively, these studies demonstrate how heterogeneity can be transformed into a quantitative signal for reconstructing tumor evolutionary dynamics.

% While the studies above leverage diverse mathematical frameworks to recover tumor dynamics from clonal heterogeneity, most treat subclone fitness as constant—typically assuming exponential growth. Such simplifications, though convenient, conflict with the resource- and space-limited tumor microenvironment, where diffusion barriers (e.g., oxygen and nutrients), vascular dysfunction, immune pressure, and solid stress drive a progressive decline in overall growth rates, resulting in the decelerating kinetics observed in vivo. In practice, tumors face a finite carrying capacity rather than an unlimited niche. 

In practice, the limited space and resources inside a tumor, imposed by diffusion barriers (e.g., for oxygen and nutrients), vascular dysfunction, immune surveillance, and solid stress, collectively drive a progressive decline in net proliferation rates. This ultimately results in the decelerating growth kinetics characteristic of in vivo tumors.
A growing body of work explicitly incorporates resource constraints into models of tumor dynamics. For example, Benzekr et al. \cite{benzekry2014classical} established that capacity-dependent models, such as Gompertzian and logistic-type growth, provide more accurate descriptions and predictions of experimental tumor growth deceleration (e.g., in breast and lung carcinoma) than exponential models. This work laid a foundation for forecasting tumor trajectories under bounded resource conditions. In a related approach, Lambert \cite{lambert2005branching} introduced a stochastic branching process with logistic growth, incorporating density-dependent regulation where birth rates decline with population size due to resource competition. This model offers a probabilistic framework for studying population dynamics under carrying-capacity constraints. More recently, Lewinsohn et al. \cite{lewinsohn2023state} developed SDevo, a multi-type birth-death process that classifies solid tumor cells into ``edge'' and ``core'' states based on spatial location. By assigning state-dependent growth rates, this framework helps reveal evolutionary patterns of tumor expansion under both spatial and resource constraints.
% A substantial body of work explicitly incorporates this constraint into tumor dynamics. For instance, \cite{benzekry2014classical} demonstrates that Gompertzian, logistic-type, and other capacity-dependent models more accurately describe and predict the deceleration of experimental tumor growth—such as in breast and lung carcinoma—than exponential models, thereby providing a foundation for forecasting tumor trajectories under bounded resources. Building on this concept, \cite{lambert2005branching} introduces a branching process with logistic growth, a stochastic model that incorporates density-dependent regulation, where birth rates decrease with population size due to pairwise interactions or resource competition, offering a probabilistic framework for population dynamics under carrying-capacity limits. Lewinsohn and co-authors \cite{lewinsohn2023state} presents SDevo, a multi-type birth–death process that partitions solid tumor cells into “edge” and “core” states based on spatial location, assigning state-dependent growth rates to reveal evolutionary modes of tumor expansion under spatial and resource constraints. 
Evolutionary game theory provides another perspective grounded in limited capacity. For example, Zhang et al. \cite{zhang2017integrating} applied Lotka–Volterra competition dynamics to model subclones with distinct phenotypes in metastatic castrate-resistant prostate cancer; this framework is subsequently integrated into treatment simulations to predict evolutionary outcomes. 
The study of competitive interactions under resource constraints has further inspired the development of modern adaptive therapy. For example, Gatenby et al. \cite{gatenby2009adaptive} proposed a strategy that leverages these interactions between sensitive and resistant lineages. By employing modulated dosing based on state-feedback (e.g., PSA levels, ctDNA, or tumor volume thresholds), the approach intentionally preserves a population of therapy-sensitive cells to suppress the expansion of resistant ones, thereby delaying disease progression while minimizing cumulative drug toxicity. Ultimately, incorporating carrying-capacity constraints into mathematical models provides a more biologically realistic framework for interpreting tumor evolution and for designing resilient, evolutionarily-informed therapeutic strategies.

% For example, \cite{zhang2017integrating} applies Lotka–Volterra competition dynamics to model subclones with distinct phenotypes in metastatic castrate-resistant prostate cancer, integrating these into treatment strategies to simulate evolutionary outcomes. This emphasis on capacity constraints has also inspired modern adaptive therapy paradigms: \cite{gatenby2009adaptive} proposes leveraging competitive interactions between sensitive and resistant lineages through modulated dosing based on state-feedback (e.g., PSA, ctDNA, or volume thresholds), intentionally preserving sensitive populations to suppress resistant ones and thereby delay progression while minimizing cumulative drug exposure. Ultimately, incorporating carrying-capacity limits into mathematical models offers a more biologically grounded lens for interpreting tumor evolution and designing resilient therapeutic strategies.

To incorporate carrying capacity into tumor dynamics, we model the system as a multi-type branching process with state-dependent growth rates. Our objective is to quantify tumor evolution by establishing a functional law of large numbers (FLLN) for this process. The FLLN for density-dependent stochastic systems has been extensively studied in probability theory. Ethier and Kurtz \cite{ethier2009markov} developed a general framework for establishing FLLN and central limit theorems for density-dependent Markov processes, demonstrating that their trajectories can be approximated by solutions to ordinary differential equations over finite time intervals. More recently, Prodhomme \cite{prodhomme2023strong} improved these results by extending the time horizon to depend on and grow unbounded with the carrying capacity. In a related work, Bansaye et al. \cite{bansaye2024sharp} analyzed a multi-type birth–death process with density-dependent rates that models mutant invasion into an equilibrium resident population, providing limit approximations across different population phases. In the specific context of hematopoietic cell proliferation, Wang et al. \cite{wang2025stochastic} derived both the FLLN and functional central limit theorem for a regulated stochastic two-compartment model, demonstrating convergence of scaled densities to ODE dynamics and, under appropriate rescaling, to a time-inhomogeneous diffusion process.

Building upon our earlier model \cite{leder2024parameter} that did not account for carrying-capacity constraints, we extend the analysis to incorporate density-dependent regulation. Specifically, we examine the joint dynamics of two tumor subpopulations, sensitive and resistant cells, each evolving according to a birth–death process, where the proliferation of resistant cells is modulated by system-wide resource limitations. A fundamental distinction between our framework and the classical model \cite{ethier2009markov} concerns the transition mechanism: we introduce a mutation rate from sensitive to resistant cells that scales with total population size via a power-law relationship. This formulation is especially relevant for modeling tumor evolution, where large population sizes and rare mutation events make such scaling biologically well-motivated. However, this modeling choice introduces significant theoretical challenges for the analysis and the derivation of a FLLN. Specifically, the presence of this state-dependent transition term prevents direct application of the standard FLLN framework \cite{ethier2009markov}, as that limiting ordinary differential equation will not account for density-driven mutation dynamics. To address these challenges, we define a stochastic stopping time corresponding to tumor recurrence and establish a novel FLLN for the subpopulation trajectories. Moreover, we derive asymptotic results of three key clinical biomarkers: recurrence time, clonal diversity, and pre-existing resistant clone sizes. These results enable the construction of consistent estimators for key parameters, including growth rates, mutation rates, and initial resistant population size.

The remainder of this paper is organized as follows. In Section \ref{sec:model}, we introduce the mathematical model for tumor evolution under therapeutic pressure, including trajectory representations of density-dependent birth–death processes for sensitive and resistant cell populations, their deterministic ODE approximations, and formal definitions of key biological and mathematical quantities such as recurrence time and clonal diversity metrics. In Section \ref{sec:theoretical results}, we present our main theoretical results: the asymptotic analysis of the deterministic system (Section \ref{sec:theo_determ}), functional law of large numbers results for population size trajectories and related quantities up to the time of tumor recurrence (Section \ref{sec:theo_stoch}), and the construction of consistent estimators for key parameters (Section \ref{sec:theo_estima}). In Section \ref{sec:simu}, we conduct numerical studies to corroborate our theoretical findings and assess the finite-sample properties and robustness of the proposed estimators.

% The remainder of this paper is organized as follows. Section \ref{sec:model} introduces the mathematical model of tumor evolution under drug treatment, including trajectory representations of density-dependent birth–death processes for sensitive and resistant cells, deterministic ODE approximations, and definitions of key quantities such as recurrence times and clonal metrics. Section \ref{sec:theoretical results} presents our main theoretical results: the asymptotic behavior of the deterministic system (Section \ref{sec:theo_determ}), law of large numbers results for stochastic trajectories and quantities up to recurrence (Section \ref{sec:theo_stoch}), and the construction of consistent estimators for intratumoral parameters (Section \ref{sec:theo_estima}). Finally, Section \ref{sec:simu} reports simulation studies that validate the convergence of stochastic processes to their ODE counterparts and evaluate the performance and robustness of the proposed estimators.

% \section{Mathematical Model and Quantities Definition}
% \label{sec:model}
\section{Model}
\label{sec:model}
We propose a stochastic model to describe the evolutionary dynamics of a tumor under therapeutic pressure. The tumor population is composed of two distinct cell subpopulations: \textit{sensitive cells} and \textit{resistant cells}. Let \( Z_0(t) \) and \( Z_1(t) \) denote the population sizes of sensitive and resistant cells at time \( t \), respectively. We assume the tumor is initially dominated by sensitive cells, accompanied by a small population of pre-existing resistant cells. The initial conditions are given by \( Z_0(0) = n \) and \( Z_1(0) = n^{\beta} \), with \( 0 < \beta < 1 \). 

We model the population dynamics through continuous-time birth-death processes. Each sensitive cell proliferates at a birth rate of \( r_0 \) and dies at a death rate of \( d_0 \), yielding a net growth rate \( \lambda_0 := r_0 - d_0 <0\). Each sensitive cell also gives birth to a resistant cell and a sensitive cell at a mutation rate which follows a power law, \( n^{-\alpha} \), where \( \alpha \in (0,1) \) \cite{brouard2023genetic}. Each resistant cell proliferates at a state-dependent birth rate, modulated by population size relative to the carrying capacity, and dies at a death rate of \( d_1 \). Specifically, the carrying capacity is defined as \( K(n) = k n \), where \( k > 1 \) is a fixed constant. The birth rate of resistant cells is denoted as \( f(Z_0/K, Z_1/K) \). The net growth rate of resistant cells is then \( \phi(Z_0/K, Z_1/K) = f(Z_0/K, Z_1/K) - d_1 \). We define \( r_1 = f(0, 0) \) as the intrinsic birth rate of resistant cells in the absence of competitive pressures, which yields an intrinsic net growth rate of \( \lambda_1 = r_1 - d_1 \). For notational convenience, we let \( K = K(n) \), \( Z(t) = (Z_0(t), Z_1(t)) \), and introduce the normalized process \( X(t) = (X_0(t), X_1(t)) \), with \( X_0(t):= Z_0(t)/K \) and \( X_1(t):= Z_1(t)/K \).

Following Chapter 2.4 of \cite{bansaye2015stochastic}, the system dynamics admit the following trajectorial representation:
% By Chapter 2.4 of \cite{bansaye2015stochastic}, we have the following trajectorial representation of the described birth-death processes:
\begin{align}
    Z_0(t) &= Z_0(0) 
    + \int_0^t \int_0^{\infty} \mathbbm{1}_{\{u \leq Z_0(s-) r_0 \}} \mathcal{N}_0^b(ds, du)
    - \int_0^t \int_0^{\infty} \mathbbm{1}_{\{u \leq Z_0(s-) d_0 \}} \mathcal{N}_0^d(ds, du), \\
    Z_1(t) &= Z_1(0) 
    + \int_0^t \int_0^{\infty} \mathbbm{1}_{\{u \leq Z_1(s-) f(Z(s-)/K) \}} \mathcal{N}_1^b(ds, du)
    - \int_0^t \int_0^{\infty} \mathbbm{1}_{\{u \leq Z_1(s-) d_1 \}} \mathcal{N}_1^d(ds, du) \\
    &\quad + \int_0^t \int_0^{\infty} \mathbbm{1}_{\{u \leq Z_0(s-) n^{-\alpha} \}} \mathcal{N}_0^m(ds, du),
\end{align}
where \( \mathcal{N}_\bullet^\bullet(ds, du) \) are independent Poisson point measures with Lebesgue measure intensity.
% Correspondingly, the normalized system is:
Similarly, the dynamics of the normalized system are governed by:
\begin{align}
\label{SDE:X0}
    X_0(t) &= X_0(0)
    + \frac{1}{K} \int_0^t \int_0^{\infty} \mathbbm{1}_{\{u \leq K X_0(s-) r_0 \}} \mathcal{N}_0^b(ds, du)
    - \frac{1}{K} \int_0^t \int_0^{\infty} \mathbbm{1}_{\{u \leq K X_0(s-) d_0 \}} \mathcal{N}_0^d(ds, du), \\
    \label{SDE:X1}
    X_1(t) &= X_1(0)
    + \frac{1}{K} \int_0^t \int_0^{\infty} \mathbbm{1}_{\{u \leq K X_1(s-) f(X(s-)) \}} \mathcal{N}_1^b(ds, du) \\
    &\quad - \frac{1}{K} \int_0^t \int_0^{\infty} \mathbbm{1}_{\{u \leq K X_1(s-) d_1 \}} \mathcal{N}_1^d(ds, du)
    + \frac{1}{K} \int_0^t \int_0^{\infty} \mathbbm{1}_{\{u \leq K X_0(s-) n^{-\alpha} \}} \mathcal{N}_0^m(ds, du).\nonumber
\end{align}
Given the important role of pre-existing resistant cells in determining treatment response and evolutionary dynamics, we isolate these cells and their progeny from the overall resistant population. We denote their population process by \( Z_{\beta}(t) \), which is governed by the stochastic differential equation:
% We define the initial clone process \( Z_{\beta}(t) \) through the following stochastic differential equation:
\begin{align}
    Z_{\beta}(t) 
    &= Z_{\beta}(0) 
    + \int_0^t \int_0^{\infty} \mathbbm{1}_{\{ u \leq Z_{\beta}(s-) f(Z(s-)/K) \}} \mathcal{N}_1^b(ds, du)
    - \int_0^t \int_0^{\infty} \mathbbm{1}_{\{ u \leq Z_{\beta}(s-) d_1 \}} \mathcal{N}_1^d(ds, du),
\end{align}
where \( Z_{\beta}(0) = n^{\beta}\) is the initial population size of pre-existing resistant cells. 
% Consequently, \( Z_{\beta}(t) \) denotes the population size of this initial clone at time \( t \). 
The corresponding normalized process, defined as 
\( X_{\beta}(t) = Z_{\beta}(t)/K \), evolves according to the dynamics governed by the following stochastic differential equation:
% The corresponding normalized process \( X_{\beta}(t) = Z_{\beta}(t)/K \) evolves according to:
\begin{align}
\label{SDE:Xbeta}
    X_{\beta}(t) 
    &= X_{\beta}(0) 
    + \frac{1}{K} \int_0^t \int_0^{\infty} \mathbbm{1}_{\{ u \leq K X_{\beta}(s-) f(X(s-)) \}} \mathcal{N}_1^b(ds, du) \\
    &\quad - \frac{1}{K} \int_0^t \int_0^{\infty} \mathbbm{1}_{\{ u \leq K X_{\beta}(s-) d_1 \}} \mathcal{N}_1^d(ds, du).
\end{align}
To facilitate our analysis, we introduce the auxiliary processes
\[
Z_m(t) = Z_1(t) - Z_{\beta}(t), \quad X_m(t) = X_1(t) - X_{\beta}(t),
\]
which represent the population of resistant cells excluding the initial pre-existing clone. Biologically, \( Z_m(t) \) corresponds to resistant subclones originating from mutations acquired from sensitive cells after treatment initiation.
% so that \( Z_m(t) \) and \( X_m(t) \) represent the remaining population of resistant cells beyond the initial clone. Biologically, \( Z_m(t) \) denotes resistant cells present in clones initiated by mutations from sensitive cells.

We define the associated deterministic ordinary differential equation (ODE) system, which approximates the dynamics of the stochastic system under consideration, as follows:
% We define the deterministic ODE system approximating the stochastic system above  as follows:
\begin{equation}
\label{ODEs}
    \begin{cases}
        \dot{y}_0(t) &= \lambda_0 \cdot y_0(t), \\[6pt]
        \dot{y}_1(t) &= \phi( y(t)) \cdot y_1(t) + n^{-\alpha} \cdot y_0(t), \\[6pt]
        \dot{y}_{\beta}(t) &= \phi( y(t)) \cdot y_{\beta}(t),
    \end{cases}
\end{equation}
where \( y(t) = (y_0(t), y_1(t)) \) with initial condition \((y_0(0),y_1(0),y_{\beta}(0)) = (n/K, n^{\beta}/K,n^{\beta}/K)\).

It is well established \cite{ethier2009markov} that in the absence of mutations (i.e., when \(\alpha = \infty\)), the normalized processes \( X_0(t) \) and \( X_1(t) \) converge almost surely to their deterministic counterparts \( y_0(t) \) and \( y_1(t) \), respectively, as \( n \to \infty \) on any finite time interval. In this work, we consider a more biologically realistic scenario where mutations occur at a rate following a power law with exponent \(\alpha \in (0,1)\). Furthermore, rather than examining deterministic finite time horizons, we analyze stopping times corresponding to tumor recurrence, specifically, the first time at which the resistant cell population reaches the initial tumor size:
% It is well established by \cite{ethier2009markov} that \( X_0(t) \) and \( X_1(t) \) converge almost surely to \( y_0(t) \) and \( y_1(t) \), respectively, as \( n \to \infty \) on any finite time interval when mutations are absent (i.e., \(\alpha = \infty\)). However, we consider a more realistic scenario with mutations present, following a power law and assuming \(\alpha \in (0,1)\). Also, instead of a deterministic finite time \( t \), we consider stopping times representing tumor recurrence, when the tumor regrows to an observable level:
\begin{align}
\label{def:zeta}
    \zeta_n &:= \inf\left\{ t > 0 : y_1(t) = \frac{n}{K} \right\}, \\[6pt]
    \label{def:gamma}
    \gamma_n &:= \inf\left\{ t > 0 : X_1(t) = \frac{n}{K} \right\}.
\end{align}

Furthermore, under the infinite-sites model, we assume that each mutation event from sensitive cells gives rise to a distinct lineage (clone) of resistant cells characterized by a unique genotype. Recent advances in genomic sequencing technologies enable the detection and quantification of such distinct resistant clones. In this work, we aim to characterize the number of surviving resistant clones at tumor recurrence. We therefore define the following quantity:
% Furthermore, employing an infinite-sites model, each mutation from sensitive cells initiates a lineage of resistant cells possessing a unique genotype. We refer to groups of resistant cells sharing an identical genotype as clones. Advances in gene sequencing technologies now allow us to detect and quantify distinct surviving clones. Specifically, we wish to investigate the number of surviving resistant clones at critical times, particularly at tumor recurrence. To quantify this, we define:
\[
 I_n(t) := \int_0^t \int_0^{\infty} \mathbbm{1}_{\{B_s(t)>0\}} \, \mathbbm{1}_{\{ u \leq K X_0(s-) n^{-\alpha} \}} \mathcal{N}_0^m(ds, du),
\]
where \( B_s(t) \) denotes the population size at time \( t \) of the resistant clone originating from a mutation at time \( s \). Thus, \( I_n(t) \) corresponds to the number of resistant clones that have survived until time \( t \).

The goal of this work is to construct estimators for key evolutionary parameters, including the growth rates \(\lambda_0\), \(\lambda_1\), the mutation power-law exponent \(\alpha\), and the initial resistant fraction exponent \(\beta\), from observables such as recurrence time \(\gamma_n\), the number of surviving resistant clones \(I_n(\gamma_n)\), and population sizes \(Z_0(\gamma_n)\) and \(Z_{\beta}(\gamma_n)\). These quantities can be derived from gene sequencing data and medical imaging (e.g., CT scans) using state-of-the-art computational methods.
% With advancements in modern technologies such as gene sequencing, CT scans, and computational tools like PyClone for inferring clonal structures in tumors, clinical observations including \(\gamma_n\), \(I_n(\gamma_n)\), \(Z_0(\gamma_n)\), and \(Z_{\beta}(\gamma_n)\) become available upon tumor recurrence. Ultimately, our goal is to utilize this observable data to estimate key intratumoral parameters characterizing cancer progression, including \(\lambda_0\), \(\lambda_1\), \(\alpha\), and \(\beta\).
Before presenting our main results, we specify the assumptions on the density-dependent birth rate function \( f(x,y) \) to ensure analytical tractability.

\begin{assumption}
\label{assump:f}
\leavevmode
\begin{enumerate}[label=(A\arabic*), itemsep=4pt]
    % \item The function \( f(x,y) \) is defined on \( \mathbb{R}^+ \times \mathbb{R}^+ \) and is Lipschitz continuous with respect to both variables \( x \) and \( y \).
    \item The function \( f: \mathbb{R}^+ \times \mathbb{R}^+ \rightarrow  \mathbb{R}^+ \) is Lipschitz continuous in both variables.
    
    % \item Boundary conditions are specified by \( f(x,y) = r_1 \) if \( x+y = 0 \), and \( f(x,y) = d_1 \) if \( x+y = 1 \).

    \item The function $f$ satisfies the boundary conditions \( f(x,y) = r_1 \) when \( x+y = 0 \), and \( f(x,y) = d_1 \) when \( x+y = 1 \).
    
    % \item There exists a non-increasing function \( \Phi(z): \mathbb{R}^+ \to \mathbb{R}^+ \) satisfying \( \frac{d\Phi}{dz} \leq 0 \), such that \( f(x,y) = \Phi(x+y) \).
    \item There exists a non-increasing function \( \Phi(z): \mathbb{R}^+ \to \mathbb{R}^+ \) such that \( \frac{d\Phi}{dz} \leq 0 \) and \( f(x,y) = \Phi(x+y) \).
    
    \item The birth rate function vanishes at infinity: \( \lim\limits_{x \to \infty} f(x,y) = \lim\limits_{y \to \infty} f(x,y) = 0 \).
    
    \item The birth rate function admits the lower bound \( f(x,y) \geq \lambda_1\left(1 - (x+y) \right) + d_1 \).
\end{enumerate}
\end{assumption}

We note that the class of generalized logistic growth functions, defined as
\[
f(x,y) = \lambda_1\left(1 - (x+y)^{\nu}\right) + d_1, \quad \nu\geq 1,
\]
satisfies the conditions specified in Assumption~\ref{assump:f}.

\section{Theoretical Results}
\label{sec:theoretical results}
\subsection{Asymptotic Behavior of the Deterministic System}
\label{sec:theo_determ}
% Before analyzing the behavior of the stochastic processes, we begin by examining the ODE system \eqref{ODEs} and the stopping time definition \eqref{def:zeta} to quantify the deterministic values of \( \zeta_n \) and \( y_{\beta}(\zeta_n) \). We first characterize the asymptotic behavior of \( \zeta_n \):
Before analyzing the stochastic system, we first examine the deterministic counterparts given by the ODE system \eqref{ODEs} and the stopping time \eqref{def:zeta}. Our objective is to characterize the asymptotic behavior of \( \zeta_n \) and \( y_{\beta}(\zeta_n) \).

\begin{proposition}
\label{prop:zeta}
% As \( n \to \infty \), the deterministic recurrence time satisfies:
In the large population limit, the scaled deterministic recurrence time converges to:
\[
\lim_{n \to \infty} \frac{\zeta_n}{\log n} = \frac{\min\left\{ 1 - \beta, \alpha \right\}}{\lambda_1}.
\]
\end{proposition}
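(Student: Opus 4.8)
The plan is to sandwich $\zeta_n$ between two exponential-growth estimates for $y_1$ and then invert. I would start from the exact solution $y_0(t) = (1/k)e^{\lambda_0 t}$ of the first ODE (recall $y_0(0) = n/K = 1/k$ and $\lambda_0 < 0$); the essential structural feature is that $y_0$ decays on an $O(1)$ time scale and is integrable, with $\int_0^\infty y_0(s)\,ds = 1/(k|\lambda_0|)$. Writing $y_1 = y_\beta + y_m$ and using the integrating-factor (Duhamel) representations
\[
y_\beta(t) = \frac{n^{\beta-1}}{k}\exp\left(\int_0^t \phi(y(s))\,ds\right), \qquad y_m(t) = n^{-\alpha}\int_0^t \exp\left(\int_s^t \phi(y(u))\,du\right) y_0(s)\,ds,
\]
reduces everything to comparing the exponent $\int_0^t \phi(y(s))\,ds$ with $\lambda_1 t$, since the recurrence level is $y_1(\zeta_n) = n/K = 1/k$.

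For the lower bound on $\zeta_n$ I would use only the inequality $\phi \le \lambda_1$, which follows from $\phi(x,y) = \Phi(x+y) - d_1 \le \Phi(0) - d_1 = \lambda_1$ by (A2)--(A3). Then $\dot y_1 \le \lambda_1 y_1 + n^{-\alpha}/k$, and the integrating factor gives $y_1(t) \le e^{\lambda_1 t}\left(n^{\beta-1}/k + n^{-\alpha}/(k\lambda_1)\right)$. Setting $t=\zeta_n$ and $y_1 = 1/k$ yields $e^{\lambda_1\zeta_n} \ge \left(n^{\beta-1} + n^{-\alpha}/\lambda_1\right)^{-1}$; since the bracket is of order $n^{-\min\{1-\beta,\alpha\}}$, taking logarithms gives $\zeta_n \ge \frac{\min\{1-\beta,\alpha\}}{\lambda_1}\log n - O(1)$, hence the matching $\liminf$.

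The upper bound is the main obstacle, because $\phi$ is strictly below $\lambda_1$ precisely when the total scaled mass $y_0 + y_1$ is of order one, so one cannot run growth at the clean rate $\lambda_1$ on the whole interval. I would resolve this by a phase decomposition exploiting the separation of time scales: by the lower bound just proved, $y_1$ needs $\Theta(\log n)$ time to reach any fixed level, so once $y_1$ is non-negligible, $y_0 = (1/k)e^{\lambda_0 \Theta(\log n)}$ is already polynomially small. First I would check that $y_0 + y_1 < 1$ on $[0,\zeta_n]$ (at any instant where the total hits $1$ one has $\phi = 0$ by (A2), so $\tfrac{d}{dt}(y_0+y_1) = (\lambda_0 + n^{-\alpha})y_0 < 0$ for large $n$, precluding an upward crossing), which makes $\phi > 0$ and $y_1$ increasing there.

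Then, fixing $\epsilon > 0$ and choosing a large constant $T_0$ and small constant $\theta_0$ with $(1/k)e^{\lambda_0 T_0} + \theta_0 \le \epsilon$, I would split $[0,\zeta_n]$ into: an initial window $[0,T_0]$ of $O(1)$ length, on which $\phi$ is bounded below by a positive constant, so that $y_1(T_0) \ge c_1 n^{\beta-1} + c_2 n^{-\alpha} \ge c_3\, n^{-\min\{1-\beta,\alpha\}}$ from the two Duhamel terms; a middle phase on which $y_0 + y_1 \le \epsilon$, so (A5) gives $\phi \ge \lambda_1(1-\epsilon)$ and $y_1$ climbs from $\Theta(n^{-\min\{1-\beta,\alpha\}})$ to $\theta_0$ in time $\le \frac{\min\{1-\beta,\alpha\}}{\lambda_1(1-\epsilon)}\log n + O(1)$; and a terminal phase from $\theta_0$ to $1/k$, which again takes $O(1)$ time since $y_0$ is negligible and $y_1 \le 1/k < 1$ keeps $\phi$ above a positive constant. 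Summing the three contributions and sending $\epsilon \to 0$ gives $\limsup \zeta_n/\log n \le \frac{\min\{1-\beta,\alpha\}}{\lambda_1}$, which with the lower bound proves the claim. The delicate point throughout is that the two regimes where $\phi \ne \lambda_1$ (small-time, where $y_0$ is order one, and near recurrence, where $y_1$ is order one) each occupy only $O(1)$ time and therefore shift $\zeta_n$ by $O(1)$, leaving the $\log n$ coefficient governed solely by $\lambda_1$.
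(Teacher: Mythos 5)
Your proof is correct, and while your lower bound coincides with the paper's (its Lemma~\ref{lem:zeta_lower_bound} dominates $y_1$ by the linear ODE $\dot{\hat y}_1 = \lambda_1\hat y_1 + n^{-\alpha}y_0$ and inverts the explicit solution; you merely simplify by bounding $y_0 \le 1/k$ rather than solving exactly), your upper bound takes a genuinely different route. The paper (Lemma~\ref{lem:zeta_upper_bound}) constructs an explicit subsolution $\bar y_1$ in two phases: on $[0,\epsilon\log n]$ it runs a linear ODE with the frozen, suboptimal rate $\bar\lambda_1 = \phi(n/K, n^{\beta}/K)$, justified by showing the total mass is nonincreasing there; for $t > \epsilon\log n$ it switches to the logistic dynamics of \textit{(A5)} and solves them in closed form, bounding the resulting integral to control the hitting time; the penalty $\frac{\lambda_1 - \bar\lambda_1}{\lambda_1}\,\epsilon\log n$ incurred by the suboptimal first-phase rate is removed by sending $\epsilon \to 0$ (or choosing $\epsilon = 1/\log n$ in the case $1-\beta \ge \alpha$). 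You avoid closed-form solutions entirely: an $O(1)$ burn-in (where only $\phi \ge 0$ is really needed, secured by your barrier argument that $y_0 + y_1$ cannot cross $1$) delivers $y_1(T_0) \gtrsim n^{-\min\{1-\beta,\alpha\}}$; a middle phase where both components stay below $\epsilon$ gives $\phi \ge \lambda_1(1-\epsilon)$ directly from \textit{(A5)}, so the climb to level $\theta_0$ takes at most $\frac{\min\{1-\beta,\alpha\}}{\lambda_1(1-\epsilon)}\log n + O(1)$; and an $O(1)$ endgame uses $\phi \ge \lambda_1(1 - 1/k - \epsilon) > 0$, the only place where $k>1$ enters. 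Your argument is more elementary and makes the mechanism transparent --- the two regimes where $\phi$ deviates from $\lambda_1$ each cost only $O(1)$ time; the one step you should make explicit is that $y_1 = o(1)$ on $[0,T_0]$ (it follows from your own lower-bound estimate), which is what keeps the total mass near $1/k$ and $\phi$ bounded below on the burn-in window. What the paper's heavier computation buys is its by-products: the quantitative bound \eqref{eq:upper_bound_1_over_y1} on $1/y_1(t)$, extracted from that explicit construction, is reused repeatedly in the proofs of Propositions~\ref{prop:ybeta} and \ref{prop:conv_Xbeta} and Theorem~\ref{thm:ratio_conv}, and your softer phase argument would not supply it.
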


\begin{proof}
See Section~\ref{sec:pf_prop_zeta}.
\end{proof}

Next, we examine the asymptotic behavior of \( y_{\beta}(\zeta_n) \).
% Next, we consider the asymptotic behavior of \( y_{\beta}(\zeta_n) \):

\begin{proposition}
\label{prop:ybeta}
As \( n \to \infty \), the solution \( y_{\beta}(\zeta_n) \) of the ODE system \eqref{ODEs} satisfies:
\[
\lim_{n \to \infty} \frac{\log \log\left( \frac{n}{K y_{\beta}(\zeta_n)} \right)}{\log n} = 1 - \alpha - \beta.
\]
\end{proposition}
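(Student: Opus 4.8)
The plan is to integrate the linear system \eqref{ODEs} explicitly and then reduce the claim to the asymptotics of a single convergent integral. Set $g(t):=\int_0^t \phi(y(s))\,ds$, so that $e^{g(t)}$ is the common integrating factor for the $y_\beta$ and $y_1$ equations. Since $y_0(t)=\tfrac1k e^{\lambda_0 t}$ is explicit and the remaining two equations are linear in their unknowns once the trajectory $y$ is fixed, I obtain
\[
y_\beta(t)=\frac{n^{\beta-1}}{k}\,e^{g(t)},\qquad y_1(t)=e^{g(t)}\Bigl(\frac{n^{\beta-1}}{k}+n^{-\alpha}\!\int_0^t y_0(s)\,e^{-g(s)}\,ds\Bigr),
\]
where I used $y_1(0)=y_\beta(0)=n^{\beta-1}/k$. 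This exhibits the decomposition $y_1=y_\beta+y_m$, with $y_m$ the mutation-generated part carrying the same exponential factor $e^{g(t)}$.

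The crucial point is that $e^{g(\zeta_n)}$ cancels in the ratio $y_m/y_\beta$. Evaluating at the recurrence time and using $y_1(\zeta_n)=n/K$ from \eqref{def:zeta} yields the identity
\[
\frac{n}{K\,y_\beta(\zeta_n)}=\frac{y_1(\zeta_n)}{y_\beta(\zeta_n)}=1+\frac{y_m(\zeta_n)}{y_\beta(\zeta_n)}=1+k\,n^{1-\alpha-\beta}J_n,\qquad J_n:=\int_0^{\zeta_n} y_0(s)\,e^{-g(s)}\,ds.
\]
This already isolates the target exponent $1-\alpha-\beta$ and reduces everything to showing that $J_n$ stays bounded away from $0$ and $\infty$.

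To pin down $J_n$ I would first establish that the total stays below the normalized capacity, $y_0(s)+y_1(s)<1$ on $[0,\zeta_n]$ (the resistant mass only grows to $n/K=1/k<1$, while $y_0$ decays and the two subpopulations are never simultaneously of order one); by (A5) this forces $\phi\ge0$, hence $g$ is nondecreasing and $e^{-g(s)}\le1$. Together with $y_0(s)=\tfrac1k e^{\lambda_0 s}$ and $\lambda_0<0$, the integrand is dominated by the $n$-independent integrable envelope $\tfrac1k e^{\lambda_0 s}$, giving uniform tail control. On any fixed $[0,T]$ the solution of \eqref{ODEs} converges as $n\to\infty$ (both $y_1(0)=n^{\beta-1}/k$ and the mutation term $n^{-\alpha}y_0$ vanish), so $y_1(s)\to0$ and $g(s)\to\bar g(s):=\int_0^s\bigl(\Phi(\tfrac1k e^{\lambda_0 u})-d_1\bigr)\,du$ pointwise. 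Dominated convergence, combined with $\zeta_n\to\infty$ (Proposition \ref{prop:zeta}), then gives
\[
J_n\longrightarrow c_\infty:=\int_0^\infty \frac1k e^{\lambda_0 s}\,e^{-\bar g(s)}\,ds\in(0,\infty).
\]

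With $\tfrac{n}{K y_\beta(\zeta_n)}=1+k c_\infty n^{1-\alpha-\beta}(1+o(1))$ in hand, the conclusion follows from a double logarithm. Writing $x_n:=k c_\infty n^{1-\alpha-\beta}(1+o(1))$, one has $\log\bigl(1+x_n\bigr)$, and in the regime where recurrence is sustained by the pre-existing clone, $\min\{1-\beta,\alpha\}=1-\beta$ (equivalently $\alpha+\beta\ge1$, so $x_n\to0$ and $y_\beta(\zeta_n)$ stays of order $n/K$), the estimate $\log(1+x_n)\sim x_n$ gives $\log\log\bigl(\tfrac{n}{K y_\beta(\zeta_n)}\bigr)=\log x_n+o(1)=(1-\alpha-\beta)\log n+O(1)$; dividing by $\log n$ produces the limit $1-\alpha-\beta$. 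I expect the main obstacle to be the uniform control of $J_n$ over the interval $[0,\zeta_n]$ whose length grows with $n$: specifically, justifying the exchange of limit and integral when the upper endpoint diverges, and proving the capacity bound $y_0+y_1<1$ that both guarantees $e^{-g}\le1$ (finiteness) and keeps $\phi$ bounded below so that $c_\infty$ is genuinely positive.
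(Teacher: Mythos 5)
Your proposal is correct in the regime $\alpha+\beta\ge 1$ (which is the only regime in which the paper ever invokes this proposition), and it takes a genuinely different route from the paper. The paper integrates $\tfrac{d}{dt}\log y_1-\tfrac{d}{dt}\log y_\beta$ to get the exact identity $\log\bigl(n/(Ky_\beta(\zeta_n))\bigr)=n^{-\alpha}\int_0^{\zeta_n}\bigl(y_0(s)/y_1(s)\bigr)\,ds$, and then sandwiches that integral between two multiples of $n^{1-\beta}$: the upper bound via the pointwise estimate \eqref{eq:upper_bound_1_over_y1} inherited from Lemma~\ref{lem:zeta_upper_bound}, and the lower bound via the explicit comparison solution $\hat y_1$ of \eqref{ODEs3} followed by a closed-form evaluation. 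You instead solve both linear equations by variation of constants, cancel the common factor $e^{g(\zeta_n)}$, and reduce everything to $J_n=\int_0^{\zeta_n}y_0e^{-g}\,ds$, which you control by a soft argument (capacity barrier plus dominated convergence) rather than by explicit comparison ODEs. The two identities are equivalent (both compute $y_1(\zeta_n)/y_\beta(\zeta_n)$ exactly), but your treatment is sharper: it produces an actual limit, $n/(Ky_\beta(\zeta_n))-1\sim k\,c_\infty\,n^{1-\alpha-\beta}$, where the paper obtains only two-sided bounds with unidentified constants (which, to be fair, it reuses with explicit values later, e.g.\ in the proofs of Proposition~\ref{prop:conv_Xbeta} and Theorem~\ref{thm: estimators}; your $J_n$ bounds would serve the same purpose).

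Two caveats. First, the capacity bound $y_0+y_1<1$ that you flag as the main obstacle does need a proof, but a short barrier argument closes it: $y_0(0)+y_1(0)=(1+n^{\beta-1})/k<1$ for large $n$, and by \textit{(A2)}--\textit{(A3)} one has $\phi=0$ on the set $\{x+y=1\}$, so at any first hitting time of the barrier the derivative of $y_0+y_1$ equals $(\lambda_0+n^{-\alpha})y_0<0$ for $n$ large, a contradiction; this yields $\phi\ge 0$ via \textit{(A5)}, hence $e^{-g}\le 1$ and your domination. Second, your restriction to $\alpha+\beta\ge 1$ is not a defect of your method but a constraint forced by the statement itself: your exact identity together with the uniform bounds $0<\tfrac1k e^{\lambda_0-\lambda_1}\le J_n\le \tfrac{1}{k|\lambda_0|}$ shows that for $\alpha+\beta<1$ one has $\log\bigl(n/(Ky_\beta(\zeta_n))\bigr)=(1-\alpha-\beta)\log n+O(1)$, so the double-logarithmic ratio converges to $0$ rather than to $1-\alpha-\beta$, i.e.\ the proposition as stated fails in that regime. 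The paper's own proof carries the same hidden restriction: its final lower-bound step, $n^\alpha\log(1+x_n)\ge n^{1-\beta}/(2(\lambda_1-\lambda_0))$ with $x_n\asymp n^{1-\alpha-\beta}$, is justified by Taylor expansion only when $x_n\to0$, i.e.\ when $\alpha+\beta>1$. Since every theorem in the paper assumes $\alpha+\beta>1$, nothing downstream is affected, but in a final write-up you should promote this restriction from a \emph{regime} remark to an explicit hypothesis (noting also that at $\alpha+\beta=1$ your $x_n$ does not tend to $0$, yet the conclusion still holds since both sides are then $O(1)/\log n$).
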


\begin{proof}
See Section~\ref{sec:pf_prop_ybeta}.
\end{proof}
\subsection{Asymptotic Behavior of the Stochastic System}
\label{sec:theo_stoch}
We now present our main convergence results. Specifically, we establish that the ratio between the solutions of the stochastic differential equations~\eqref{SDE:X0}, \eqref{SDE:X1}, \eqref{SDE:Xbeta} and their deterministic counterparts~\eqref{ODEs} converges uniformly to $1$ in probability over the time interval $[0, \zeta_n+\delta]$, for any fixed constant $\delta>0$.

% Now we are ready to present our main convergence results. Specifically, we demonstrate that in probability, the ratio between the solutions of the stochastic differential equations~\eqref{SDE:X0}, \eqref{SDE:X1}, \eqref{SDE:Xbeta} and their deterministic ODE counterparts~\eqref{ODEs} converges uniformly to 1 before the deterministic time \( \zeta_n \) plus an arbitrary constant.

\begin{theorem}
\label{thm:ratio_conv}
Let \( \epsilon, \delta > 0 \). Suppose \( \beta > 1 + \frac{\lambda_1}{\lambda_0} \) and \( \alpha + \beta > 1 \). Then, for any \( u_1 < \beta/2 \), \( u_2 < \min\{ \beta/2, \alpha + \beta - 1 \} \), we have:
\begin{align}
    &\lim_{n \to \infty} \mathbb{P} \left( \sup_{t \leq \zeta_n + \delta} \left| \frac{X_0(t)}{y_0(t)} - 1 \right| > \epsilon \right) = 0, \\[6pt]
    &\lim_{n \to \infty} \mathbb{P} \left( n^{u_1} \sup_{t \leq \zeta_n + \delta} \left| \frac{X_1(t)}{y_1(t)} - 1 \right| > \epsilon \right) = 0, \\[6pt]
    &\lim_{n \to \infty} 
    \mathbb{P} \left( 
        n^{u_2} \sup_{t \leq \zeta_n+\delta} \left| \frac{X_{\beta}(t)}{y_{\beta}(t)} - 1 \right| > \epsilon 
    \right) = 0.
\end{align}
\end{theorem}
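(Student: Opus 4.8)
The plan is to prove each of the three statements by controlling a \emph{deviation process} obtained after an integrating-factor change of variables, which turns the density-dependent SDE into an identity of the form (initial value) $+$ (drift driven by the discrepancy in the birth rate) $+$ (martingale). On each such identity I would bound the martingale part by Doob's $L^2$ inequality (or Burkholder--Davis--Gundy) and the drift by a Gronwall argument, carried out uniformly over the growing window $[0,\zeta_n+\delta]$. The reason this is delicate --- and the reason the standard Ethier--Kurtz estimates do not apply directly --- is that by Proposition~\ref{prop:zeta} the horizon $\zeta_n$ grows like $\log n$, so the exponential integrating factors become \emph{polynomial} in $n$. The whole argument hinges on the fact that, although individual factors such as $e^{-\lambda_0\zeta_n}$ or $g(\zeta_n):=\exp\left(\int_0^{\zeta_n}\phi(y(s))\,ds\right)$ are polynomially large, the time integrals $\int_0^{\zeta_n} y_i(s)\,ds$ of the deterministic trajectories remain $O(1)$ (each is dominated by its $O(1)$ endpoint value divided by the growth rate), which is what prevents the error bounds from diverging.

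First I would treat $X_0$, which is autonomous. Since $Z_0$ is a subcritical linear birth--death process, $W_0(t):=e^{-\lambda_0 t}Z_0(t)$ is a nonnegative martingale with $\mathbb{E}[W_0(t)]=n$ and predictable quadratic variation $\langle W_0\rangle_t=(r_0+d_0)\int_0^t e^{-\lambda_0 s}W_0(s)\,ds$. Taking expectations and using $\mathbb{E}[Z_0(s)]=n e^{\lambda_0 s}$ gives $\mathbb{E}[\langle W_0\rangle_{\zeta_n+\delta}]\asymp n\,e^{|\lambda_0|\zeta_n}\asymp n^{1+\theta}$ with $\theta=|\lambda_0|\min\{1-\beta,\alpha\}/\lambda_1$. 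The hypothesis $\beta>1+\lambda_1/\lambda_0$ is precisely what ensures $\theta<1$, so Doob's $L^2$ inequality followed by Chebyshev yields $\mathbb{P}(\sup_{t\le\zeta_n+\delta}|X_0/y_0-1|>\epsilon)\lesssim n^{\theta-1}\to0$. Running the same computation with $\epsilon$ replaced by a vanishing sequence shows that $X_0/y_0\to1$ at a polynomial rate, which I will feed into the next two estimates.

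For $X_\beta$ and $X_1$ I would use the integrating factor $g$. Writing $U(t):=Z_\beta(t)/g(t)$, a short computation gives
\[
U(t)=n^\beta+\int_0^t\bigl[f(X(s))-f(y(s))\bigr]U(s)\,ds+N_\beta(t),\qquad N_\beta(t):=\int_0^t g(s)^{-1}\,dM_\beta(s).
\]
The martingale $N_\beta$ has $\langle N_\beta\rangle_t=\int_0^t g(s)^{-1}(f(X)+d_1)U(s)\,ds\lesssim n^\beta$, so $\sup|N_\beta|$ is of order $n^{\beta/2}$; relative to the scale $n^\beta$ this is the source of the $\beta/2$ threshold and reflects the $\sqrt{\cdot}$ fluctuations of the size-$n^\beta$ pre-existing clone. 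For the drift I would use that $f=\Phi(\cdot)$ is Lipschitz, so $|f(X)-f(y)|\lesssim|X_0-y_0|+|X_\beta-y_\beta|+|X_m-y_m|$. The $|X_\beta-y_\beta|$ piece is the self-consistent Gronwall term, whose accumulated coefficient is $O(1)$ (of order $L/(k\lambda_1)$) and can be absorbed using the monotonicity of $\Phi$ to control its sign as the total density approaches the carrying capacity; the $|X_0-y_0|$ piece is handled by the previous step. The remaining piece $|X_m-y_m|$ is \emph{external} to the $X_\beta$ dynamics and is only controlled crudely by its size $\asymp y_m$; since $\alpha+\beta>1$, the relative weight $y_m/y_\beta\asymp n^{-(\alpha+\beta-1)}\to0$ (the pre-existing clone dominates the mutant clones), so this term contributes at relative order $n^{-(\alpha+\beta-1)}$. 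Combining the two sources gives $u_2<\min\{\beta/2,\alpha+\beta-1\}$. The estimate for $X_1$ follows the identical scheme, except that $f(X)=\Phi(X_0+X_1)$ depends only on $X_0$ (controlled) and on $X_1$ itself (closed by Gronwall), so \emph{no} external mutant term appears; the additional mutation-driven martingale contributes at a strictly faster rate than $n^{-\beta/2}$, and the estimate closes at $u_1<\beta/2$.

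The main obstacle is making the heuristic ``good event'' rigorous while keeping the coupled system from feeding back on itself over the $\log n$ horizon. I would handle this with a bootstrap/stopping-time argument: define $\tau_n$ to be the first time any of the three ratios leaves its target band, prove the martingale and drift bounds above on $[0,\tau_n\wedge(\zeta_n+\delta)]$ (where all three deviations are a priori small, so the Lipschitz linearization and the Gronwall coefficient are valid), and then conclude $\mathbb{P}(\tau_n\le\zeta_n+\delta)\to0$. The two technical pressure points are (i) verifying that the Gronwall coefficient arising from the density-dependent feedback stays below $1$ uniformly in $n$, for which Assumption~\ref{assump:f} (Lipschitz continuity together with the monotone, sub-carrying-capacity form of $\Phi$) is essential, and (ii) the precise bookkeeping showing that every error term, after multiplication by the polynomially large integrating factors, still decays at the claimed rate --- which is exactly where the $O(1)$ bounds on $\int_0^{\zeta_n}y_i\,ds$ and the hypothesis $\beta>1+\lambda_1/\lambda_0$ are used.
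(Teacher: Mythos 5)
Your overall architecture is essentially the paper's: a semimartingale (integrating-factor) decomposition of the relative error, Burkholder--Davis--Gundy/Doob for the martingale parts, a Gronwall argument over $[0,\zeta_n+\delta]$, and a good-event argument to close the loop (note your $U(t)/n^\beta$ \emph{is} the paper's ratio $X_\beta(t)/y_\beta(t)$, since $Ky_\beta(t)=n^\beta\exp(\int_0^t\phi(y)\,ds)$). Your first statement, your $n^{-\beta/2}$ martingale thresholds, and your identification of the mutant-clone term as the source of the $\alpha+\beta-1$ exponent are all sound. However, there is one genuine quantitative gap: the way you feed the sensitive-cell error into the $X_1$ and $X_\beta$ estimates.

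You control the drift contribution $\int_0^{\zeta_n+\delta} L\,|X_0(s)-y_0(s)|\,U(s)n^{-\beta}\,ds$ through the sup-norm bound on $|X_0/y_0-1|$ from your first step. But $\sup_{t\le\zeta_n+\delta}|X_0(t)/y_0(t)-1|$ is \emph{genuinely} of order $n^{(\theta-1)/2}$, where $\theta=\frac{|\lambda_0|}{\lambda_1}\min\{1-\beta,\alpha\}=\frac{|\lambda_0|}{\lambda_1}(1-\beta)$: the relative fluctuation at time $\zeta_n$ is of order $Z_0(\zeta_n)^{-1/2}\asymp n^{(\theta-1)/2}$, so no argument routed through this supremum can do better. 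Since $\int_0^\infty y_0\,ds=O(1)$, your cross-term bound is $O_P(n^{(\theta-1)/2})$, and $(1-\theta)/2\ge\beta/2$ holds iff $|\lambda_0|\le\lambda_1$. In the regime $|\lambda_0|>\lambda_1$, which the hypotheses permit (e.g.\ $\lambda_0=-0.9$, $\lambda_1=0.1$, $\beta=0.95$, $\alpha=0.9$ satisfies $\beta>1+\lambda_1/\lambda_0$ and $\alpha+\beta>1$, yet $(1-\theta)/2=0.275<0.475=\beta/2$), your argument therefore only yields the second and third limits for $u_1,u_2<(1-\theta)/2$, strictly weaker than claimed. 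The paper avoids this by never passing through the supremum of the relative $X_0$-error: it bounds
\begin{equation*}
\mathbb{E}\left[\int_0^{\zeta_n+\delta}\frac{X_1(s)}{y_1(s)}\,|X_0(s)-y_0(s)|\,ds\right]
\le \left(\int_0^{\zeta_n+\delta}\frac{\mathbb{E}[X_1(s)^2]}{y_1(s)^2}\,ds\right)^{1/2}
\left(\int_0^{\zeta_n+\delta}\mathbb{E}\!\left[(X_0(s)-y_0(s))^2\right]ds\right)^{1/2},
\end{equation*}
and uses the \emph{pointwise} variance $\mathbb{E}[(X_0(s)-y_0(s))^2]\le \frac{n}{K^2}\frac{r_0+d_0}{|\lambda_0|}e^{\lambda_0 s}$, whose time integral is $O(n^{-1})$ because the variance decays exponentially in $s$; this gives $O(n^{-1/2}\sqrt{\log n})$, which beats $n^{-\beta/2}$ for every $\beta<1$ regardless of the sign of $\lambda_1-|\lambda_0|$. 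In your notation the fix is to bound $\int e^{\lambda_0 s}\,\mathbb{E}|W_0(s)-n|\,ds\le\int e^{\lambda_0 s}\,\mathrm{Var}(W_0(s))^{1/2}ds=O(\sqrt{n})$ instead of invoking $\sup_s|W_0(s)-n|$. Two minor remarks: Gronwall does not require your ``coefficient below $1$'' condition, only tightness of $\exp(\int(\cdot))$, which the paper obtains from $\mathbb{E}\int X_1\,ds=O(1)$ via stochastic domination by a branching process with immigration (an alternative to your bootstrap stopping time that needs no a priori band on $X_1$); and that same domination also replaces your bootstrap in the quadratic-variation bounds.
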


\begin{proof}
See Section~\ref{sec:pf_thm_ratio_conv}.
\end{proof}

The parameters \( u_1 \) and \( u_2 \) in Theorem~\ref{thm:ratio_conv} govern the convergence rates of the ratios \( X_1(t)/y_1(t) \) and \( X_{\beta}(t)/y_{\beta}(t) \), respectively. Larger values of \( u_1 \) and \( u_2 \) correspond to faster convergence. The condition \( \beta > 1 + \lambda_1 / \lambda_0 \) ensures the persistence of sensitive cells at recurrence time \( \zeta_n \),  which is biologically supported by clinical observations that sensitive cells often remain detectable upon relapse \cite{chaft2011disease,piotrowska2015heterogeneity, ichihara2015shades}.

% This is biologically plausible, as studies have shown that sensitive cells may still constitute a portion of the tumor population upon relapse \cite{chaft2011disease,piotrowska2015heterogeneity, ichihara2015shades}.

The second condition, \( \alpha + \beta > 1 \), is biologically plausible given that mutation events are typically rare, often resulting in values of \( \alpha \) close to $1$. This inequality admits a natural biological interpretation: the parameter \( \beta \), governing the initial size of the resistant population, reflects the system's intrinsic \textit{stability}, while \( 1 - \alpha \), representing the intensity of mutations from sensitive to resistant cells, introduces external \textit{variability}. For the sample paths of the stochastic system to remain uniformly close to the deterministic trajectories over the relevant time scale, the inherent stability of the resistant population must exceed the variability introduced by mutations. Thus, the condition \( \alpha + \beta > 1 \) ensures that the stochastic fluctuations arising from mutations do not disrupt the mean-field dynamics dictating the system’s long-term behavior.

% The second condition, \( \alpha + \beta > 1 \), is also mild in biological contexts. Since mutation events are typically rare, \( \alpha \) is often close to 1. To better understand this condition, observe that \( \beta \), which governs the size of the initial resistant cell population, can be interpreted as a measure of the system of resistant cells' intrinsic \textit{stability}. In contrast, \( 1 - \alpha \), which quantifies the mutation intensity from sensitive to resistant cells, introduces external \textit{variability}. For the stochastic trajectories to closely track their deterministic counterparts, the system's inherent stability must dominate this variability. Hence, the inequality \( \alpha + \beta > 1 \) guarantees that the noise introduced by mutations does not overwhelm the  growth of the deterministic system. 

Theorem~\ref{thm:ratio_conv} establishes a strong asymptotic equivalence between the deterministic and stochastic systems, thereby justifying the use of deterministic trajectories as approximations for analyzing key stochastic quantities. A direct implication of this result is the convergence of the stochastic recurrence time \( \gamma_n \) to its deterministic counterpart \( \zeta_n \).

% Theorem~\ref{thm:ratio_conv} establishes a strong correspondence between the deterministic and stochastic systems, which motivates the use of deterministic trajectories as proxies for analyzing stochastic quantities. One immediate consequence is the convergence of the stochastic recurrence time \( \gamma_n \) to its deterministic counterpart \( \zeta_n \).

\begin{proposition}
\label{prop:convergence_of_gamma_n}
Let \( \gamma_n \) and \( \zeta_n \) be defined as in~\eqref{def:gamma} and~\eqref{def:zeta}, respectively. Then, under the condition \( \alpha + \beta > 1 \), for any \( \epsilon > 0 \) and \( u < \beta/2 \),
\[
\lim_{n \to \infty} \mathbb{P}\left( n^u |\gamma_n - \zeta_n| > \epsilon \right) = 0.
\]
\end{proposition}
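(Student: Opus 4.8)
The plan is to convert the pathwise closeness of the scaled resistant population $X_1$ to its deterministic profile $y_1$, supplied by Theorem~\ref{thm:ratio_conv}, into closeness of the level‑crossing times $\gamma_n$ and $\zeta_n$. The guiding principle is elementary: if $y_1$ crosses the target level $L := n/K = 1/k$ transversally, with a slope bounded below by a positive constant \emph{uniform in} $n$, then an $O(n^{-u_1})$ perturbation of the trajectory can displace the crossing time by only $O(n^{-u_1})$. Choosing the convergence exponent $u_1\in(u,\beta/2)$, which is possible since $u<\beta/2$, then yields the desired rate $n^{-u}$.

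First I would carry out a deterministic preparation. Writing $w:=y_0+y_1$, Assumption~\ref{assump:f}(A2) gives $\phi(y)=\Phi(w)-d_1=0$ on $\{w=1\}$, so along \eqref{ODEs} one has $\dot w=(\lambda_0+n^{-\alpha})y_0<0$ whenever $w=1$, for $n$ large (recall $\lambda_0<0$). Since $w(0)=(1+n^{\beta-1})/k<1$ for large $n$, a barrier argument shows $w(t)<1$ for all $t\ge 0$; consequently $\phi(y(t))>0$ and $\dot y_1(t)=\phi(y(t))y_1(t)+n^{-\alpha}y_0(t)>0$ throughout, so $y_1$ is strictly increasing on $[0,\zeta_n+\delta]$ with $y_1\le 1$. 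Using Assumption~\ref{assump:f}(A5), $\phi(y(\zeta_n))\ge \lambda_1\bigl(1-y_0(\zeta_n)-1/k\bigr)$, and since Proposition~\ref{prop:zeta} forces $\zeta_n\to\infty$ and hence $y_0(\zeta_n)=k^{-1}e^{\lambda_0\zeta_n}\to 0$, the slope $\dot y_1(\zeta_n)\ge \phi(y(\zeta_n))/k$ is bounded below by a constant $c>0$ for all large $n$. The Lipschitz regularity in Assumption~\ref{assump:f}(A1) bounds $|\ddot y_1|$ by a constant $M$ near $\zeta_n$, and a Taylor expansion then produces a fixed $\eta_1>0$ (uniform in $n$) with
\[
y_1(\zeta_n+s)\ge L+\tfrac{c}{2}s,\qquad y_1(\zeta_n-s)\le L-\tfrac{c}{2}s,\qquad 0\le s\le\eta_1.
\]

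Next I would invoke Theorem~\ref{thm:ratio_conv} with the exponent $u_1\in(u,\beta/2)$: for every $\epsilon'>0$ the event $E_n:=\{\sup_{t\le\zeta_n+\delta}|X_1(t)/y_1(t)-1|\le\epsilon' n^{-u_1}\}$ has probability tending to $1$, and on $E_n$, since $y_1\le 1$, one has $|X_1(t)-y_1(t)|\le\epsilon' n^{-u_1}$ uniformly on $[0,\zeta_n+\delta]$. Setting $s_n:=(4\epsilon'/c)\,n^{-u_1}$ (so $s_n\le\eta_1$ and $s_n\le\delta$ for large $n$), the bounds above give, on $E_n$, that $X_1(\zeta_n+s_n)\ge y_1(\zeta_n+s_n)-\epsilon' n^{-u_1}\ge L+\epsilon' n^{-u_1}>L$, whence $\gamma_n\le\zeta_n+s_n$ because $X_1(0)<L$ and $Z_1$ moves by unit jumps; and for every $t\le\zeta_n-s_n$, monotonicity yields $y_1(t)\le y_1(\zeta_n-s_n)\le L-\tfrac{c}{2}s_n$, so $X_1(t)\le y_1(t)+\epsilon' n^{-u_1}\le L-\epsilon' n^{-u_1}<L$, giving $\gamma_n\ge\zeta_n-s_n$. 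Hence $|\gamma_n-\zeta_n|\le s_n$ on $E_n$, so $n^{u}|\gamma_n-\zeta_n|\le (4\epsilon'/c)\,n^{u-u_1}\to 0$ since $u<u_1$; combined with $\mathbb P(E_n^c)\to 0$ this closes the argument.

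The main obstacle I anticipate is the deterministic step, namely securing a slope lower bound and global monotonicity of $y_1$ that are uniform in $n$ even though $\zeta_n\to\infty$ and the profile $y_1$ itself depends on $n$. The barrier argument for $w=y_0+y_1$ is what makes this tractable: it pins the total scaled mass strictly below the carrying‑capacity threshold $1$ for all time, which simultaneously forces $y_1$ to be increasing (ruling out spurious earlier crossings of $L$ far from $\zeta_n$) and keeps $\phi(y(\zeta_n))$ bounded away from $0$ via (A5). Everything else reduces to a routine sandwiching of the random crossing time between $\zeta_n-s_n$ and $\zeta_n+s_n$.
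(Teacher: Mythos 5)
Your proposal is correct and follows essentially the same route as the paper: both proofs convert the uniform ratio bound of Theorem~\ref{thm:ratio_conv} into a two-sided sandwich $\zeta_n - s_n \le \gamma_n \le \zeta_n + s_n$ by showing, via Assumption~\ref{assump:f}\textit{(A5)}, that $y_1$ passes through the level $n/K$ with speed bounded below uniformly in $n$ (the paper phrases this multiplicatively, through the auxiliary logistic ODE and backward differential inequality; you phrase it additively, through a transversal slope bound with an intermediate exponent $u_1 \in (u,\beta/2)$). A minor bonus of your write-up is the explicit barrier argument giving $y_0+y_1<1$ for all $t$, which justifies the monotonicity of $y_1$ (and the strict positivity of $\phi$) that the paper's proof asserts without justification.
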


\begin{proof}
See Section~\ref{sec:pf_prop_gamma_conv}.
\end{proof}

\vspace{1em}
We now focus on characterizing the resistant population at recurrence. A key quantity is the number of distinct resistant clones present at time \( \gamma_n \), denoted $I_n(\gamma_n)$. The following result shows that $I_n(\gamma_n)$
scales polynomially with exponent \( 1 - \alpha \).

% We now turn our attention to characterizing the resistant population at recurrence. A key quantity of interest is the number of distinct resistant clones alive at time \( \gamma_n \). The following result establishes that this number grows polynomially with exponent \( 1 - \alpha \):

\begin{proposition}
\label{prop:bound_of_In}
There exist positive constants $c_I$ and $C_I$ such that
\[
\lim_{n \to \infty} \mathbb{P} \left( c_I n^{1 - \alpha} \leq I_n(\gamma_n) \leq C_I n^{1 - \alpha} \right) = 1.
\]
\end{proposition}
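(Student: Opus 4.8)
The plan is to sandwich $I_n(\gamma_n)$ between a constant multiple of $n^{1-\alpha}$ from above and below by comparing it with the total number of mutation events and by controlling the survival probability of each resistant clone. Write
\[
M_n(t) := \int_0^t \int_0^{\infty} \mathbbm{1}_{\{u \le K X_0(s-) n^{-\alpha}\}} \, \mathcal{N}_0^m(ds,du)
\]
for the total number of mutations up to time $t$, so that $I_n(\gamma_n) \le M_n(\gamma_n)$ since every surviving clone corresponds to a distinct mutation. The compensator of $M_n$ is $n^{-\alpha}\int_0^t Z_0(s-)\,ds$, and I would use Theorem~\ref{thm:ratio_conv} together with Proposition~\ref{prop:convergence_of_gamma_n} to replace $X_0(s)$ by $y_0(s) = (n/K)e^{\lambda_0 s}$ uniformly on $[0,\gamma_n] \subset [0,\zeta_n+\delta]$. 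Since $\lambda_0 < 0$ and $\gamma_n \to \infty$,
\[
n^{-\alpha}\int_0^{\gamma_n} Z_0(s)\,ds \;\approx\; n^{1-\alpha}\int_0^{\gamma_n} e^{\lambda_0 s}\,ds \;\longrightarrow\; \frac{n^{1-\alpha}}{|\lambda_0|},
\]
so on a high-probability event the compensator is bounded above by $2 n^{1-\alpha}/|\lambda_0|$. A Poisson/martingale concentration bound for $M_n$ against its compensator then yields $M_n(\gamma_n) \le C_I n^{1-\alpha}$ with probability tending to $1$, giving the upper bound.

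For the lower bound I would first show that mutations are plentiful and then that a fixed positive fraction of the resulting clones survive to recurrence. Restricting to a fixed window $[0,T]$, the same compensator estimate gives $M_n(T) \ge \tfrac{1}{2}|\lambda_0|^{-1}(1-e^{\lambda_0 T})\, n^{1-\alpha} \ge c\, n^{1-\alpha}$ with high probability for $T$ large. On $[0,T]$ the resistant population $X_1$ is still $o(1)$ and $X_0 \le (1/k)+o(1)$, so $X_0 + X_1$ stays bounded away from $1$; by Assumption~\ref{assump:f}(A5) this forces $f(X(t)) \ge \underline{r}$ for some constant $\underline{r} > d_1$. Consequently each clone born in $[0,T]$ dominates an independent birth–death process with constant birth rate $\underline{r}$ and death rate $d_1$, which is supercritical and survives forever with probability $q := 1 - d_1/\underline{r} > 0$; surviving forever implies being alive at the finite time $\gamma_n$, so each such clone is present at recurrence with probability at least $q$. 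Conditioning on the environment (the path of $X_0$ and the mutation times) and using the conditional independence of distinct clones under the infinite-sites assumption, the number of survivors stochastically dominates a $\mathrm{Binomial}(M_n(T),q)$ variable; concentration then gives $I_n(\gamma_n) \ge \tfrac{1}{2} q c\, n^{1-\alpha} =: c_I n^{1-\alpha}$ with high probability.

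The main obstacle is the lower bound, specifically decoupling the clones from the shared, random, density-dependent environment: because $X_1$ aggregates all resistant clones, the rate $f(X(\cdot))$ felt by any one clone depends on the others, so the clones are not independent and their survival probabilities are themselves random. The device that resolves this is to work on the high-probability event $\{\sup_{t \le \gamma_n}(X_0(t)+X_1(t)) \le 1 - \eta\}$ — which I would verify from Theorem~\ref{thm:ratio_conv}, the ODE asymptotics of Section~\ref{sec:theo_determ}, and the fact that recurrence fixes $X_1(\gamma_n) = 1/k < 1$ while $X_0(\gamma_n)\to 0$ — so that (A5) produces a \emph{deterministic} lower bound $f \ge \underline r > d_1$ and licenses the clone-by-clone coupling to i.i.d.\ supercritical processes. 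A secondary technical point is that $\gamma_n$ is a random stopping time growing with $n$: I would replace it by the deterministic horizon $\zeta_n + \delta$ (via Proposition~\ref{prop:convergence_of_gamma_n}) before invoking any concentration inequality, and note that on the same event the per-individual extinction probability stays at most $d_1/\underline r < 1$, so an established clone persists throughout $[0,\zeta_n+\delta]$ and is indeed counted at recurrence.
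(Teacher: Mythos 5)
Your proposal is correct, and its skeleton coincides with the paper's: both arguments work on the high-probability event (obtained from Theorem~\ref{thm:ratio_conv}, the ODE bounds, and Proposition~\ref{prop:convergence_of_gamma_n}) on which $X_0+X_1$ stays bounded away from $1$ up to time $\zeta_n+\delta$, invoke \textit{(A5)} of Assumption~\ref{assump:f} to force $f \geq \underline{r} > d_1$ there, and then sandwich $I_n(\gamma_n)$ between two comparison processes --- above by the count of all mutation events (the paper's ``upper envelope,'' in which mutant clones never die, is exactly your $M_n$), and below by clone counts coupled, via thinning, to independent birth--death processes with constant rates $(\underline{r}, d_1)$. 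Where you genuinely diverge is in how the two envelope counts are controlled. The paper imports Theorem~2 of \cite{leder2024parameter}, which gives full laws of large numbers $\hat I_n(\gamma_n)/n^{1-\alpha} \to 1/|\lambda_0|$ and $\bar I_n(\gamma_n)/n^{1-\alpha} \to (r_1^{\min}-d_1)/(|\lambda_0| r_1^{\min})$ for the constant-rate model, from which the constants $c_I$ and $C_I$ drop out immediately. You instead give self-contained estimates: for the upper bound, the compensator $n^{-\alpha}\int_0^{\gamma_n} Z_0(s)\,ds \approx n^{1-\alpha}/|\lambda_0|$ plus a martingale/Poisson concentration bound; for the lower bound, restriction to a fixed window $[0,T]$ (which still captures a constant fraction of all mutations, since $\int_0^T e^{\lambda_0 s}ds$ is a positive constant), the supercritical survival probability $q = 1 - d_1/\underline{r}$ per coupled clone, and binomial concentration. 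Your route is more elementary and does not rely on the external theorem, at the cost of producing only order-of-magnitude constants rather than the sharp limiting ones --- which is all Proposition~\ref{prop:bound_of_In} (and the consistency of $\hat\alpha$ downstream) actually requires. One point to tighten in a written version: your phrase ``conditional independence of distinct clones under the infinite-sites assumption'' is not literally correct for the \emph{actual} clones, since they interact through the shared density $f(X_0,X_1)$; the independence holds only for the coupled lower-envelope clones, whose jump intensities depend on their own states alone --- your ``main obstacle'' paragraph already identifies this, so the fix is just to route the binomial domination entirely through the coupled processes, as the paper's explicit thinning construction does.
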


\begin{proof}
See Section~\ref{sec:pf_prop_bound_In}. 
\end{proof}

\vspace{1em}
% In addition to the number of clones, we are also interested in the size of the clone initiated by the pre-existing resistant population. The next proposition provides precise asymptotics for this quantity at the time of recurrence:
In addition to the number of resistant clones, we are also interested in the the size of the pre-existing resistant clone. The following proposition establishes the asymptotic behavior of this population at recurrence.

\begin{proposition}
\label{prop:conv_Xbeta}
There exist positive constants $c$ and $C$ such that
\[
\lim_{n \to \infty} 
\mathbb{P} \left( 
    c n^{1 - \alpha - \beta} 
    < -\log \left( \frac{Z_{\beta}(\gamma_n)}{n} \right) 
    < C n^{1 - \alpha - \beta} 
\right) = 1.
\]
\end{proposition}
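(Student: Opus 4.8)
The plan is to reduce the statement to a two-sided order estimate for the mutation-derived resistant population $Z_m = Z_1 - Z_\beta$ at recurrence. Since $\gamma_n$ is defined by $X_1(\gamma_n) = n/K$, we have $Z_1(\gamma_n) = n$ exactly, so $Z_\beta(\gamma_n)/n = 1 - Z_m(\gamma_n)/n$ and hence
\[
-\log\!\left(\frac{Z_\beta(\gamma_n)}{n}\right) = -\log\!\left(1 - \frac{Z_m(\gamma_n)}{n}\right).
\]
Using $x \le -\log(1-x) \le x/(1-x)$ for $x\in(0,1)$, it therefore suffices to show that $c_2\, n^{2-\alpha-\beta} \le Z_m(\gamma_n) \le C_2\, n^{2-\alpha-\beta}$ with probability tending to one, for some $0<c_2<C_2$. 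Indeed, the standing hypothesis $\alpha+\beta>1$ gives $2-\alpha-\beta<1$, so such an estimate forces $Z_m(\gamma_n)/n \to 0$; then $-\log(Z_\beta(\gamma_n)/n)$ agrees with $Z_m(\gamma_n)/n$ up to a factor $1+o(1)$, which yields the claimed bounds with exponent $1-\alpha-\beta$.

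I would first pin down the deterministic benchmark $K y_m(\zeta_n)$, where $y_m := y_1 - y_\beta$ solves $\dot y_m = \phi(y)y_m + n^{-\alpha}y_0$ with $y_m(0)=0$. Writing $M(t) = \exp\!\big(\int_0^t \phi(y(s))\,ds\big)$ for the integrating factor, variation of constants gives $y_m(\zeta_n) = n^{-\alpha} M(\zeta_n)\int_0^{\zeta_n} y_0(s)/M(s)\,ds$. Here $y_0(s) = k^{-1}e^{\lambda_0 s}$, and since $y_\beta(\zeta_n) = y_\beta(0)M(\zeta_n)$ with $y_\beta(0)=n^{\beta-1}/k$ and $k y_\beta(\zeta_n)\to 1$ (a consequence of Proposition~\ref{prop:ybeta}), one obtains $M(\zeta_n)\asymp n^{1-\beta}$. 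Assumption~(A5) forces $\phi(y(s))>0$ along the trajectory, so $M(s)\ge 1$ and the integrand is dominated by the integrable $k^{-1}e^{\lambda_0 s}$ while bounded below on a fixed initial interval; hence $\int_0^{\zeta_n} y_0(s)/M(s)\,ds$ lies between two positive constants. Combining, $y_m(\zeta_n)\asymp n^{1-\alpha-\beta}$ and $K y_m(\zeta_n)\asymp n^{2-\alpha-\beta}$, the target order.

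The crux is transferring this to the stochastic $Z_m(\gamma_n)$, and here Theorem~\ref{thm:ratio_conv} is not directly usable: $Z_m$ is the difference of $Z_1$ and $Z_\beta$, each of order $n$, while the theorem controls their relative error only to $O(n^{-u})$ with $u<\min\{\beta/2,\alpha+\beta-1\}$; in absolute terms this error is $O(n^{1-u})$, which exceeds the target $n^{2-\alpha-\beta}$ whenever $u<\alpha+\beta-1$, so $Z_m$ cannot be resolved by subtraction. Instead I would decompose $Z_m(t) = \sum_i B_{s_i}(t)$ over the mutant clones born at the atoms $s_i$ of $\mathcal{N}_0^m$ (intensity $n^{-\alpha}Z_0(s)\,ds$), each $B_{s_i}$ a birth--death process with rates $f(X(\cdot))$ and $d_1$, and compute the first two moments conditional on the burden path. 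On the ratio-convergence event of Theorem~\ref{thm:ratio_conv} one has $X_0\approx y_0$, $X_1\approx y_1$, so $\phi(X)\approx\phi(y)$ and the conditional mean of $Z_m(t)$ reproduces $K y_m(t)\asymp n^{2-\alpha-\beta}$ up to constants (the condition $\beta>1+\lambda_1/\lambda_0$ keeps $Z_0$, hence the mutation source, active up to $\zeta_n$). A variance estimate then gives a relative standard deviation of order $n^{(\alpha-1)/2}=o(1)$, matching $1/\sqrt{I_n(\gamma_n)}$ with $I_n(\gamma_n)\asymp n^{1-\alpha}$ dominant early-born clones by Proposition~\ref{prop:bound_of_In}; thus $Z_m(t)$ concentrates around $K y_m(t)$ uniformly for $t$ in a fixed neighborhood of $\zeta_n$. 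Since $y_m$ is continuous and Proposition~\ref{prop:convergence_of_gamma_n} gives $\gamma_n\to\zeta_n$ in probability, evaluating at $t=\gamma_n$ yields $Z_m(\gamma_n)\asymp n^{2-\alpha-\beta}$ with high probability.

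The main obstacle is precisely this concentration step, which resolves $Z_m$ below the precision of Theorem~\ref{thm:ratio_conv}. Two features make it delicate: the clones are coupled through the shared burden $X$ (so they are not genuinely independent), and $\gamma_n$ is random. I would handle the coupling by sandwiching the environment between deterministic trajectories $y(1\pm\epsilon)$ on the ratio-convergence event, which decouples the clones into independent birth--death processes with frozen rates and makes the moment computations rigorous, and absorb the random time using $\gamma_n\approx\zeta_n$. I expect the lower bound to be the hardest part: ruling out an atypical downward fluctuation, in particular the simultaneous extinction of the dominant early-born clones, requires the second-moment (Paley--Zygmund) argument rather than the first-moment/Markov bound that suffices for the upper bound.
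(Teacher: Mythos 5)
Your reduction ($Z_1(\gamma_n)=n$ exactly, hence $-\log(Z_\beta(\gamma_n)/n)=-\log(1-Z_m(\gamma_n)/n)$), your deterministic benchmark $Ky_m(\zeta_n)\asymp n^{2-\alpha-\beta}$, your variance exponent $n^{(\alpha-1)/2}$, and your observation that naive subtraction through Theorem~\ref{thm:ratio_conv} is too coarse (since $u_2<\alpha+\beta-1$) are all correct, and your clone-decomposition route is genuinely different from the paper's. But there is a concrete gap in the step you rely on most: the environment sandwich $y(1\pm\epsilon)$ does not deliver the \emph{constant-factor} bounds that the proposition asserts. The clones' birth rate error under that sandwich is $|f(X)-f(y)|\le L\left(|X_0-y_0|+|X_1-y_1|\right)$, and while the $X_0$ part is harmless ($\epsilon y_0$ is integrable in time), the $X_1$ part is of size $\epsilon y_1\asymp \epsilon/k$ \emph{uniformly} on $[0,\zeta_n]$. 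This error sits in the exponent of each clone's conditional mean $\exp\left(\int_s^t\phi(\cdot)\,du\right)$, so over the horizon $\zeta_n\asymp\log n$ it accumulates to $O(\epsilon\log n)$, i.e.\ the conditional mean of $Z_m$ is controlled only up to factors $n^{\pm C\epsilon}$. Your argument therefore proves $n^{2-\alpha-\beta-C\epsilon}\le Z_m(\gamma_n)\le n^{2-\alpha-\beta+C\epsilon}$ with high probability for every $\epsilon>0$ (enough for the exponent, and in fact enough for the downstream consistency of $\hat\beta$), but not the statement with fixed constants $c,C$. The fix is available inside Theorem~\ref{thm:ratio_conv} itself: use the rate version for the resistant ratio, $n^{u_1}\sup_{t\le\zeta_n+\delta}|X_1/y_1-1|\le\epsilon$ with $u_1>0$, so the $X_1$ contribution to the rate error is $\le\epsilon n^{-u_1}/k$ and integrates to $o(1)$ over the $\log n$ horizon; then the accumulated exponent error is $O(\epsilon)+o(1)$ and genuine constants emerge. (Also, once the relative standard deviation is $o(1)$, Chebyshev gives both sides of the concentration; Paley--Zygmund is not needed.)

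It is worth seeing why the paper is immune to this issue, because the contrast explains where your difficulty comes from. The paper never resolves $Z_m$ at all: it applies It\^o's formula to $\log X_\beta$ and $\log X_1$, and the common drift $\int_0^t\phi(KX(s))\,ds$ cancels \emph{exactly and pathwise} in the difference, leaving the identity
\begin{equation*}
-\log\left(\frac{KX_\beta(\gamma_n)}{n}\right)
= n^{-\alpha}\int_0^{\gamma_n}\frac{X_0(s)}{X_1(s)}\,ds
+\left(\bar{M}_1-\bar{M}_\beta\right)(\gamma_n)
+\left(Q_1-Q_\beta\right)(\gamma_n),
\end{equation*}
where the martingale difference and the It\^o correction terms are shown to be $o_P(n^{1-\alpha-\beta})$ by BDG/moment estimates. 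The leading integral is then compared to $n^{-\alpha}\int_0^{\gamma_n} y_0/y_1\,ds$ \emph{multiplicatively inside the integrand}, so the plain $\epsilon$-ratio event suffices to produce constants: no exponential of an accumulated error ever appears. Your approach buys a more transparent picture (explicit clone structure, contact with $I_n(\gamma_n)$), but it must pay for an exponential-growth comparison over a $\log n$ time window, and that payment requires the polynomial convergence rate $u_1$, not just ratio convergence.
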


\begin{proof}
See Section~\ref{sec:pf_prop_conv_Xbeta}. 
\end{proof}
\subsection{Construction of Estimators}
\label{sec:theo_estima}
In Section \ref{sec:theo_stoch}, we have characterized the asymptotic behavior of key stochastic quantities at tumor recurrence time 
\( \gamma_n \). Specifically, we have established convergence results for: (i) the number of distinct resistant clones \( I_n(\gamma_n) \), (ii) the size of the pre-existing resistant clone \( Z_{\beta}(\gamma_n) \)\footnote{By Proposition~\ref{prop:conv_Xbeta}, the pre-existing resistant clone is, with high probability, the largest resistant clone at recurrence, making it clinically tractable.}, and (iii) the recurrence time \( \gamma_n \) itself. To facilitate parameter estimation, we additionally incorporate \( Z_0(\gamma_n) \), whose asymptotic properties are well-established in prior work~\cite{leder2024parameter, leder2021clonal}. These results provide the theoretical foundation for constructing estimators of key evolutionary parameters. We now define estimators for \( \lambda_0 \), \( \lambda_1 \), \( \alpha \), and \( \beta \) as follows:
% With all the efforts made above, we have adequately described the behavior of several important quantities at the tumor recurrence time \( \gamma_n \). In particular, we focus on the number of alive clones \( I_n(\gamma_n) \), the size of the clone initiated by the pre-existing resistant population \( Z_{\beta}(\gamma_n) \), and the recurrence time \( \gamma_n \) itself. To construct the estimators, we also include \( Z_0(\gamma_n) \), whose behavior has been well studied in the literature~\cite{leder2024parameter, leder2021clonal}.
% Moreover, by Proposition~\ref{prop:conv_Xbeta}, \( Z_{\beta}(\gamma_n) \) is, with high probability, the largest resistant clone at recurrence. This makes it a biologically and computationally accessible quantity, just like \( I_n(\gamma_n) \), \( Z_0(\gamma_n) \), and \( \gamma_n \).
% Based on these observable quantities, we define our estimators for the parameters \( \lambda_0 \), \( \lambda_1 \), \( \alpha \), and \( \beta \) as follows:
\begin{align}
\label{def:alpha_hat}
    \hat{\alpha} &:= 1 - \log_n\left( I_n(\gamma_n) \right), \\
    \hat{\beta} &:= 1 - \hat{\alpha} - \frac{\log\log\left( \frac{n}{Z_{\beta}(\gamma_n)} \right)}{\log n}, \\
    \hat{\lambda}_0 &:= \frac{1}{\gamma_n} \log\left( \frac{Z_0(\gamma_n)}{n} \right), \\
\label{def:lambda1_hat}
    \hat{\lambda}_1 &:= \frac{1 - \hat{\beta}}{\gamma_n} \log n.
\end{align}

We now state our main statistical result regarding the consistency of the proposed estimators:
% We are now ready to present our main statistical result:
\begin{theorem}
\label{thm: estimators}
Suppose \( \beta > 1 + \frac{\lambda_1}{\lambda_0} \) and \( \alpha + \beta > 1 \). Then the estimators $\hat{\alpha}$, $\hat{\beta}$, $\hat{\lambda}_0$, and $\hat{\lambda}_1$ are consistent.
% defined above are consistent.
\end{theorem}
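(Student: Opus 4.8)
The plan is to establish consistency estimator by estimator, in each case reducing the claim to one of the asymptotic results already proven and then stripping off the irrelevant multiplicative constants by a logarithmic (or double-logarithmic) operation. Throughout, ``consistent'' means convergence in probability to the true parameter value as $n \to \infty$, and the two structural conditions $\alpha + \beta > 1$ and $\beta > 1 + \lambda_1/\lambda_0$ will be used to (i) reduce the minimum in Proposition~\ref{prop:zeta} to $\min\{1-\beta,\alpha\} = 1-\beta$, and (ii) guarantee the persistence of sensitive cells so that $Z_0(\gamma_n)$ remains a meaningful observable.

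I would begin with $\hat\alpha$, which is the cleanest. Proposition~\ref{prop:bound_of_In} sandwiches $I_n(\gamma_n)$ between $c_I n^{1-\alpha}$ and $C_I n^{1-\alpha}$ with probability tending to $1$. Taking $\log_n$ of both sides, the terms $\log c_I/\log n$ and $\log C_I/\log n$ vanish, so $\log_n I_n(\gamma_n) \to 1-\alpha$ in probability and hence $\hat\alpha = 1 - \log_n I_n(\gamma_n) \to \alpha$. For $\hat\beta$ I would apply the same device one level higher: Proposition~\ref{prop:conv_Xbeta} gives the two-sided polynomial bound $c\,n^{1-\alpha-\beta} < \log(n/Z_\beta(\gamma_n)) < C\,n^{1-\alpha-\beta}$, so taking $\log$ once more and dividing by $\log n$ yields $\log\log(n/Z_\beta(\gamma_n))/\log n \to 1-\alpha-\beta$. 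Combining this with the already-established $\hat\alpha \to \alpha$ via the continuous-mapping/Slutsky theorem gives $\hat\beta = 1 - \hat\alpha - \log\log(n/Z_\beta(\gamma_n))/\log n \to 1-\alpha-(1-\alpha-\beta) = \beta$. The two-sidedness of the bounds in these two propositions is exactly what renders the constants irrelevant after taking logarithms, so no further work is needed here.

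The two growth-rate estimators require one extra ingredient, namely the order of the recurrence time. Proposition~\ref{prop:zeta} together with $\alpha+\beta>1$ gives $\zeta_n/\log n \to (1-\beta)/\lambda_1$, and Proposition~\ref{prop:convergence_of_gamma_n} transfers this to the stochastic recurrence time, so $\gamma_n/\log n \to (1-\beta)/\lambda_1$ and in particular $\gamma_n \to \infty$. For $\hat\lambda_1$ this is immediate: $\hat\lambda_1 = (1-\hat\beta)(\log n/\gamma_n) \to (1-\beta)\cdot\lambda_1/(1-\beta) = \lambda_1$ by continuous mapping, using $1-\beta>0$. For $\hat\lambda_0$ I would use the explicit solution $y_0(t) = (1/k)e^{\lambda_0 t}$ of the first ODE in \eqref{ODEs}, which gives $Z_0(\gamma_n)/n = k X_0(\gamma_n) = e^{\lambda_0\gamma_n}\,(X_0(\gamma_n)/y_0(\gamma_n))$, whence $\hat\lambda_0 = \lambda_0 + \gamma_n^{-1}\log(X_0(\gamma_n)/y_0(\gamma_n))$.

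The one genuinely delicate point is evaluating the ratio $X_0/y_0$ at the \emph{random} time $\gamma_n$. Theorem~\ref{thm:ratio_conv} only controls $\sup_{t\le\zeta_n+\delta}|X_0(t)/y_0(t)-1|$ over the deterministic window $[0,\zeta_n+\delta]$, so I would first localize $\gamma_n$ inside this window: by Proposition~\ref{prop:convergence_of_gamma_n}, $\mathbb{P}(\gamma_n>\zeta_n+\delta)\to 0$, and on the complementary event $|X_0(\gamma_n)/y_0(\gamma_n)-1|$ is dominated by the supremum, which tends to $0$ in probability. Hence $\log(X_0(\gamma_n)/y_0(\gamma_n))\to 0$ in probability, and since $\gamma_n\to\infty$ the remainder $\gamma_n^{-1}\log(X_0(\gamma_n)/y_0(\gamma_n))$ is negligible, giving $\hat\lambda_0\to\lambda_0$. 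This localization-at-a-random-time step, together with the bookkeeping needed to divide a vanishing error by the diverging $\gamma_n$, is where I expect the only real care to be required; all the heavy lifting has already been carried out in the preceding propositions.
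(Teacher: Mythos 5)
Your proposal is correct, and for three of the four estimators it is essentially the paper's own argument: for $\hat{\alpha}$, the sandwich from Proposition~\ref{prop:bound_of_In} followed by $\log_n$; for $\hat{\lambda}_0$, the same localization of the random time $\gamma_n$ inside $[0,\zeta_n+\delta]$ via Proposition~\ref{prop:convergence_of_gamma_n} so that Theorem~\ref{thm:ratio_conv} controls $X_0(\gamma_n)/y_0(\gamma_n)$, combined with $\gamma_n\to\infty$; and for $\hat{\lambda}_1$, the convergence $\gamma_n/\log n \to (1-\beta)/\lambda_1$ (from Propositions~\ref{prop:zeta} and~\ref{prop:convergence_of_gamma_n}, using $\alpha+\beta>1$ to resolve the minimum) together with $\hat{\beta}\to\beta$ --- the paper merely writes this last step as an explicit three-term decomposition rather than invoking Slutsky.

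The genuine difference is your treatment of $\hat{\beta}$, which is simpler than the paper's. You apply Proposition~\ref{prop:conv_Xbeta} directly to the stochastic observable: on the high-probability event $c\,n^{1-\alpha-\beta} < \log\bigl(n/Z_{\beta}(\gamma_n)\bigr) < C\,n^{1-\alpha-\beta}$, taking one further logarithm and dividing by $\log n$ makes the constants $c,C$ vanish, so $\log\log\bigl(n/Z_{\beta}(\gamma_n)\bigr)/\log n \to 1-\alpha-\beta$ in probability and $\hat{\beta}\to\beta$ follows by combining with $\hat{\alpha}\to\alpha$. The paper instead inserts the deterministic quantity $Ky_{\beta}(\zeta_n)$, handles the term $\bigl|1-\alpha-\beta-\log\log(n/(Ky_{\beta}(\zeta_n)))/\log n\bigr|$ via Proposition~\ref{prop:ybeta}, and then performs a rather delicate comparison of $n-Z_{\beta}(\gamma_n)$ with $n-Ky_{\beta}(\zeta_n)$ (Taylor expansions, and matching the explicit constants of \eqref{eq:bound_log_ybeta} and \eqref{eq:setting_C}) to control the remaining cross term. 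That extra machinery is unnecessary for consistency, precisely because Proposition~\ref{prop:conv_Xbeta} is already stated for $Z_{\beta}(\gamma_n)$ at the stochastic recurrence time with $n$-independent two-sided constants, which your double-logarithm step absorbs; what the paper's longer route buys is an explicit quantitative link between $Z_{\beta}(\gamma_n)$ and its deterministic counterpart, information not needed for the theorem. Your proof is valid as written.
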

\begin{proof}
See Section~\ref{sec:pf_thm_estimators}. 
\end{proof}

\section{Simulation Results}
\label{sec:simu}
\subsection{Convergence of the Stochastic System}

In this section, we perform numerical simulations to validate Theorem~\ref{thm:ratio_conv}, which establishes the convergence of the stochastic system to its mean-field approximation. Specifically, we demonstrate that the normalized population processes \( X_0(t) = Z_0(t)/K \), \( X_1(t) = Z_1(t)/K \), and \( X_{\beta}(t) = Z_{\beta}(t)/K \) converge in probability to their deterministic counterparts  \( y_0(t), y_1(t), y_{\beta}(t) \), uniformly over the interval \( [0, \zeta_n + \delta] \) as \( n \to \infty \).

% We conduct numerical simulations to illustrate Theorem~\ref{thm:ratio_conv}, which establishes convergence between the stochastic model and its mean-field approximation. In particular, we verify that the normalized processes \( X_0(t) = Z_0(t)/K \), \( X_1(t) = Z_1(t)/K \), and \( X_{\beta}(t) = Z_{\beta}(t)/K \) converge in probability to the deterministic ODE solutions \( y_0(t), y_1(t), y_{\beta}(t) \) uniformly on the interval \( [0, \zeta_n + \delta] \) as \( n \to \infty \).

We simulate the stochastic system using the Gillespie algorithm, which generates exact realizations of the event sequence (e.g., birth, death, mutation) and their precise occurrence times according to the model defined in Section~\ref{sec:model}. For the birth rate function, we employ a logistic growth form $f(x,y)=\lambda_1 \left(1-\left(x+y\right)\right)+d_1$. Mutations from sensitive to resistant cells occur at a rate of \( n^{-\alpha} Z_0(t) \). The recurrence time \( \gamma_n \) is recorded when the resistant population \( Z_1(t) \) reaches the initial tumor burden \( n \). In parallel, we numerically solve the ODE system~\eqref{ODEs} using the Runge–Kutta 45 (RK45) method to obtain the deterministic trajectories \( y_0(t), y_1(t), y_{\beta}(t) \).

% We use Gillespie Algorithm to simulate the exact sequence of events (e.g., birth, death, mutation) and the precise times at which they occur of the model described in Section \ref{sec:model}. We specify the growth function $f(x,y)$ to be logistic growth. Mutation from sensitive to resistant cells occurs continuously at rate \( n^{-\alpha} Z_0(t) \). The recurrence time \( \gamma_n \) is recorded when the resistant population \( Z_1(t) \) reaches the initial tumor burden \( n \). In parallel, we obtain the trajectories \( y_0(t), y_1(t), y_{\beta}(t) \) through solving the ODE system~\eqref{ODEs} numerically via a Runge–Kutta 45 (RK45) scheme to obtain the trajectories \( y_0(t), y_1(t), y_{\beta}(t) \).

% The stochastic system is simulated using a discrete-event implementation of the density-dependent birth-death processes defined in Section~\ref{sec:model}. We specify the growth function $f(x,y)$ to be logistic growth. Mutation from sensitive to resistant cells occurs continuously at rate \( n^{-\alpha} Z_0(t) \). The recurrence time \( \gamma_n \) is recorded when the resistant population \( Z_1(t) \) reaches the initial tumor burden \( n \). In parallel, the ODE system~\eqref{ODEs} is solved numerically via a Runge–Kutta 45 (RK45) scheme to obtain the trajectories \( y_0(t), y_1(t), y_{\beta}(t) \).

Figure~\ref{fig:thm1} compares stochastic and deterministic trajectories for increasing system sizes \( n = 10^3, 10^4, 10^5, 10^6 \). Solid lines depict the stochastic trajectories \( Z_0, Z_1, Z_{\beta} \), while dashed lines represent the scaled deterministic solutions \( K y_0, K y_1, K y_{\beta} \). As $n$ increases, stochastic fluctuations diminish and the trajectories converge uniformly to their deterministic counterparts, validating the convergence result established in Theorem~\ref{thm:ratio_conv}.

% Figure~\ref{fig:thm1} displays stochastic and deterministic trajectories across increasing system sizes \( n = 10^3, 10^4, 10^5, 10^6 \). Solid lines represent the raw stochastic trajectories \( Z_0, Z_1, Z_{\beta} \), while dashed lines represent the ODE solutions \( K y_0, K y_1, K y_{\beta} \). As \( n \) increases, stochastic variability decreases and convergence to the deterministic solutions becomes visually evident, confirming the uniform convergence result of Theorem~\ref{thm:ratio_conv}.

\begin{figure}[ht!]
\centering
\includegraphics[width=0.48\linewidth]{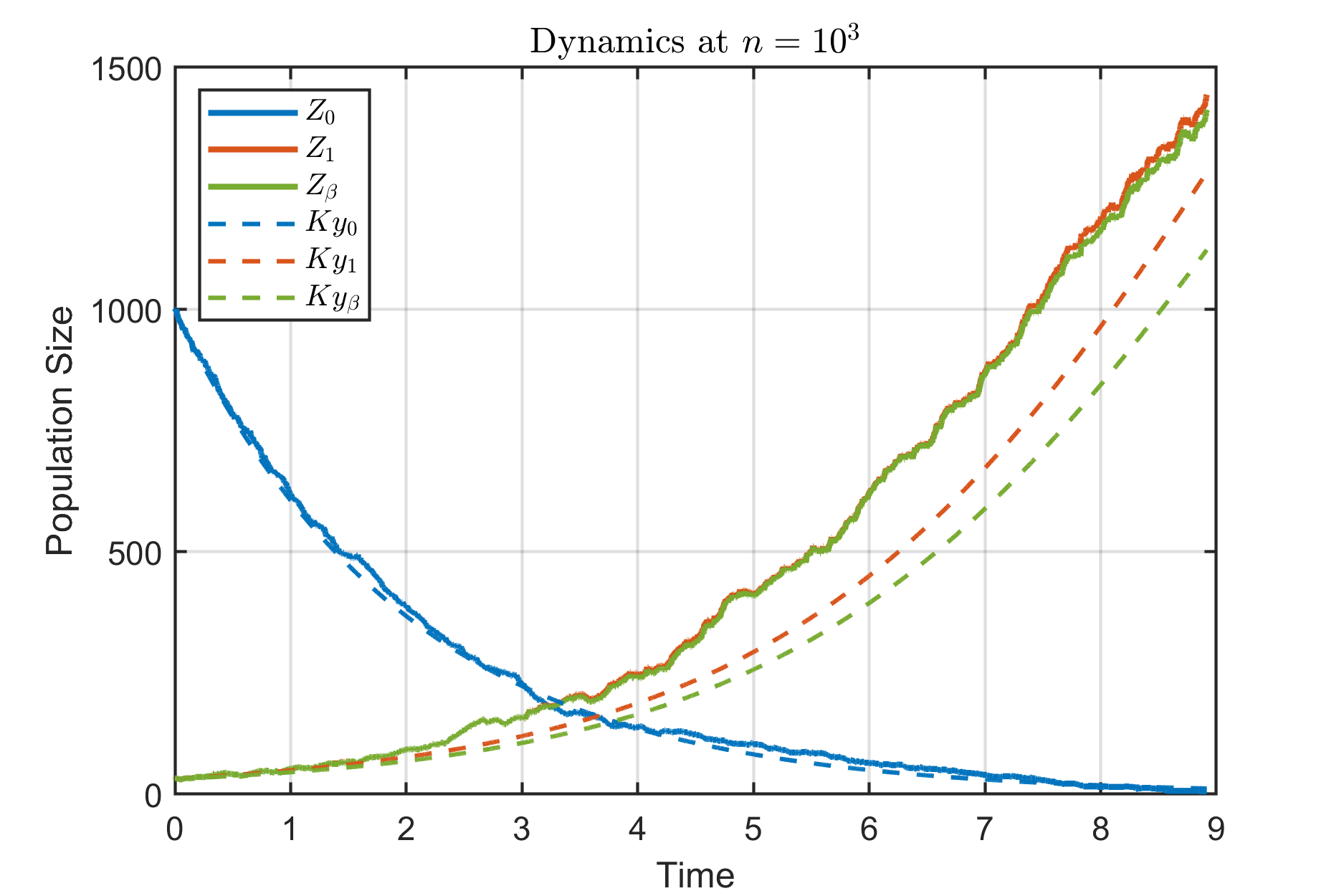}
\includegraphics[width=0.48\linewidth]{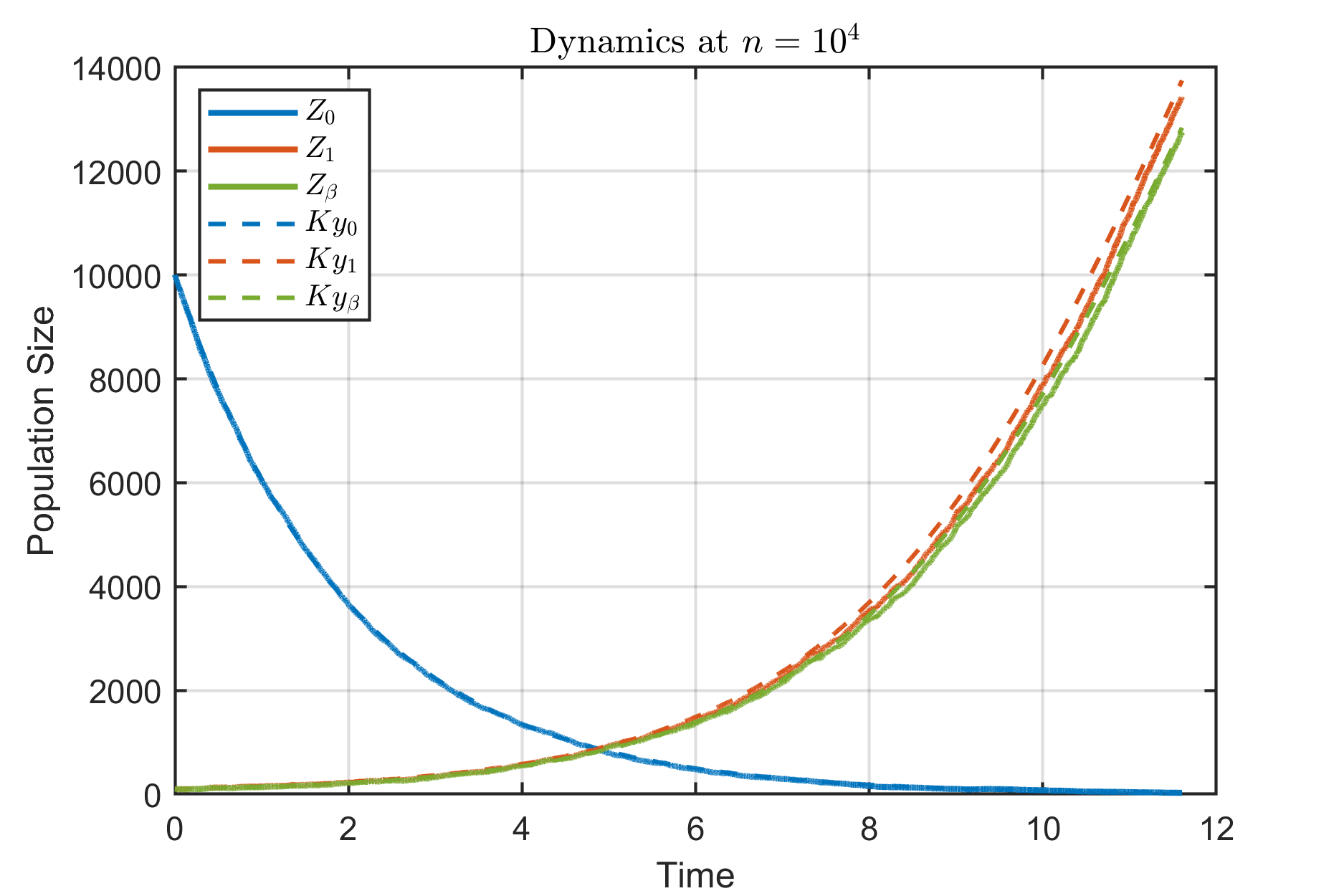} \\
\vspace{2mm}
\includegraphics[width=0.48\linewidth]{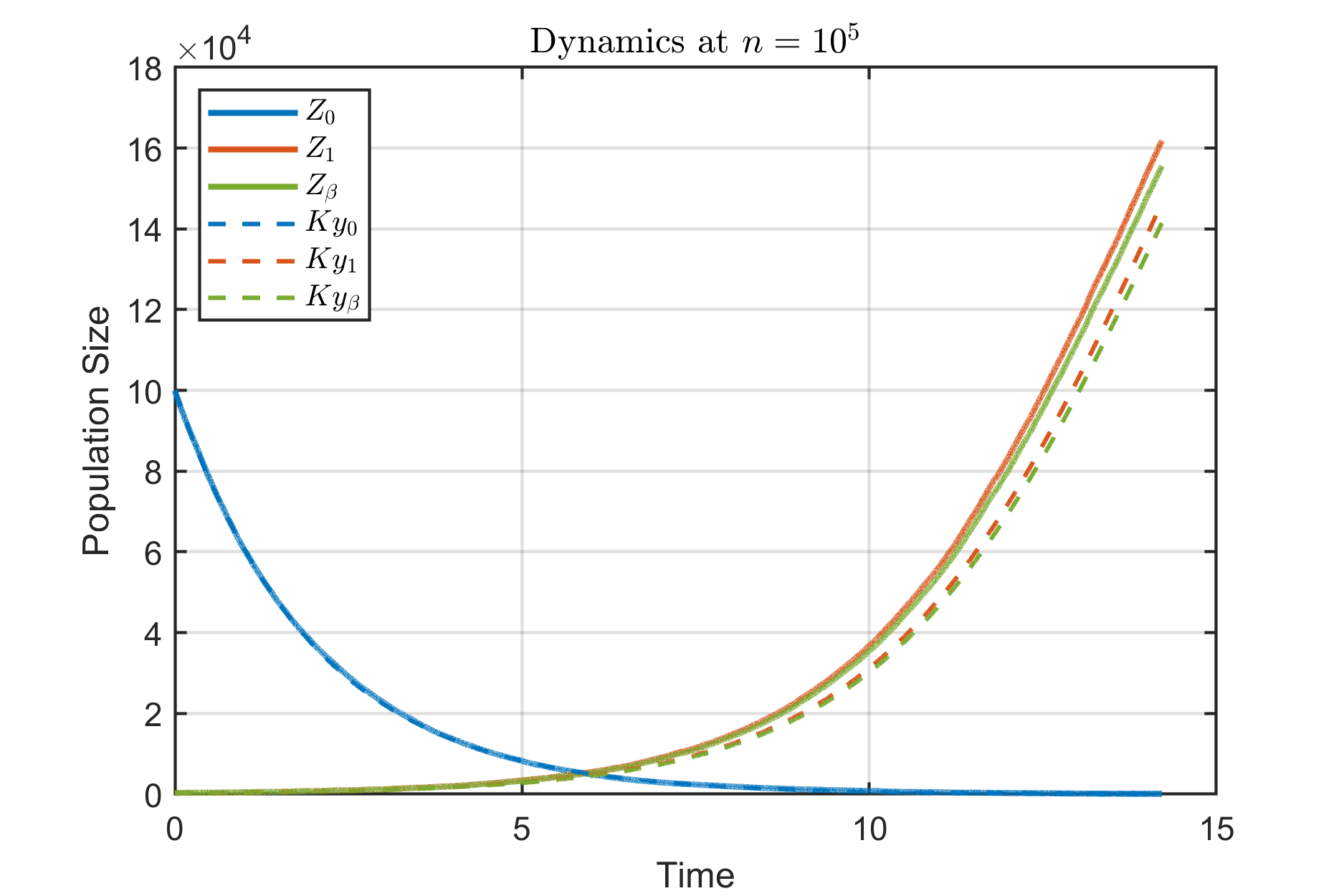}
\includegraphics[width=0.48\linewidth]{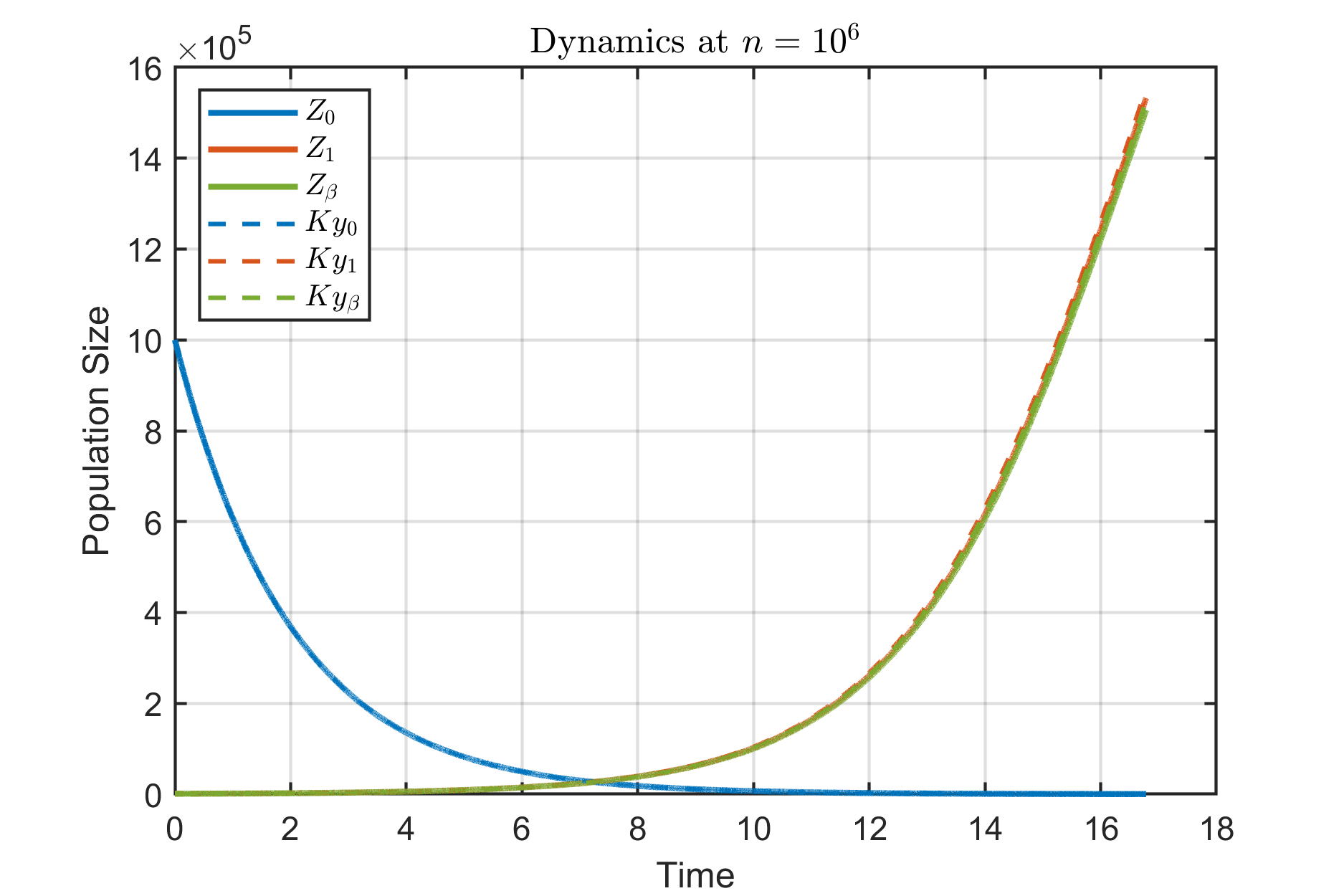}
% \caption{Simulated tumor dynamics for increasing system sizes \( n = 10^3, 10^4, 10^5, 10^6 \). Parameters: \( \alpha = 0.8 \), \( \beta = 0.5 \), \( \lambda_0 = -0.5 \), \( \lambda_1 = 0.5 \), \( k = 3 \). Solid lines: stochastic processes \( Z_0, Z_1, Z_{\beta} \). Dashed lines: corresponding ODE solutions \( K y_0, K y_1, K y_{\beta} \). As \( n \to \infty \), stochastic fluctuations diminish and convergence is observed.}
\caption{Simulated tumor dynamics under therapeutic pressure for increasing system sizes \( n = 10^3, 10^4, 10^5, 10^6 \). Parameter values: \( \alpha = 0.8 \), \( \beta = 0.5 \), \( \lambda_0 = -0.5 \), \( \lambda_1 = 0.5 \), \( k = 3 \). Solid lines represent stochastic trajectories ($Z_0$: sensitive cells, $Z_1$: total resistant cells, $Z_{\beta}$: pre-existing resistant clone). Dashed lines show corresponding scaled deterministic solutions (\( K y_0, K y_1, K y_{\beta} \)). As \( n \to \infty \), stochastic fluctuations diminish and trajectories converge uniformly to their deterministic limits.}
\label{fig:thm1}
\end{figure}

\subsection{Consistency of the Proposed Estimators}
Using the same parameter values as in Figure~\ref{fig:thm1}, we perform $10$ simulations for each system size $n$. At each stochastic recurrence time \( \gamma_n \), we record three key quantities: the number of surviving resistant clones \( I_n(\gamma_n) \), the sensitive cell population size \( Z_0(\gamma_n) \), and the size of the pre-existing resistant clone \( Z_{\beta}(\gamma_n) \). Following the estimator definitions in equations~\eqref{def:alpha_hat}–\eqref{def:lambda1_hat}, we compute the corresponding parameter estimates $\hat{\alpha}$, $\hat{\beta}$, $\hat{\lambda}_0$, and $\hat{\lambda}_1$ for each simulation trial. To quantify estimator accuracy, we compute the relative error for each parameter estimate as follows:
\[
\frac{|\hat{\alpha} - \alpha|}{\alpha}, \quad \frac{|\hat{\beta} - \beta|}{\beta}, \quad \frac{|\hat{\lambda}_0 - \lambda_0|}{|\lambda_0|}, \quad \frac{|\hat{\lambda}_1 - \lambda_1|}{\lambda_1}.
\]
The mean and standard deviation of these relative errors are then computed across simulation trials and plotted against the system size \( n \) (equivalently, against the carrying capacity \( K = 3n \)).
% Under the same parameter settings, we repeat the simulation 10 times for each value of \( n \). At the stochastic recurrence time \( \gamma_n \), we record key quantities: the number of surviving resistant clones \( I_n(\gamma_n) \), the sensitive cell population \( Z_0(\gamma_n) \), and the size of the pre-existing resistant clone \( Z_{\beta}(\gamma_n) \). Using the definitions in equations~\eqref{def:alpha_hat}--\eqref{def:lambda1_hat}, we compute the corresponding estimators \( \hat{\alpha}, \hat{\beta}, \hat{\lambda}_0, \hat{\lambda}_1 \) for each trial.

% To evaluate estimator performance, we compute the relative error in each case as 
% \[
% \frac{|\hat{\alpha} - \alpha|}{\alpha}, \quad \frac{|\hat{\beta} - \beta|}{\beta}, \quad \frac{|\hat{\lambda}_0 - \lambda_0|}{|\lambda_0|}, \quad \frac{|\hat{\lambda}_1 - \lambda_1|}{\lambda_1}.
% \]
% We then plot the mean and standard deviation of these relative errors against system size \( n \) (equivalently \( K = 3n \)).

As shown in Figure~\ref{fig:thm2}, the mean relative error decreases systematically with increasing system size for all estimated parameters. At \( n = 10^7 \), the relative error plus one standard deviation remains below $10\%$ for all estimators and below $2\%$ for \( \lambda_0 \) and \( \lambda_1 \). Given that clinically observed tumors frequently reach sizes on the order of \( 10^9 \) cells or larger, these results indicate strong potential for practical applicability. Furthermore, the narrowing variability (shaded regions) with increasing $n$ provides empirical support for the theoretical consistency established in Theorem~\ref{thm: estimators}.

% As shown in Figure~\ref{fig:thm2}, the mean relative error decreases with increasing system size for all parameters. When \( n = 10^8 \), the relative error—plus one standard deviation—falls below 10\% for all estimators, and below 2\% for \( \lambda_0 \) and \( \lambda_1 \). Since clinically observed tumors often reach populations on the order of \( 10^9 \) cells and above, these results suggest strong practical reliability. Additionally, the narrowing shaded regions support the consistency of our estimators as established in Theorem~\ref{thm: estimators}.

\begin{figure}[ht!]
\centering
    \includegraphics[width=0.48\linewidth]{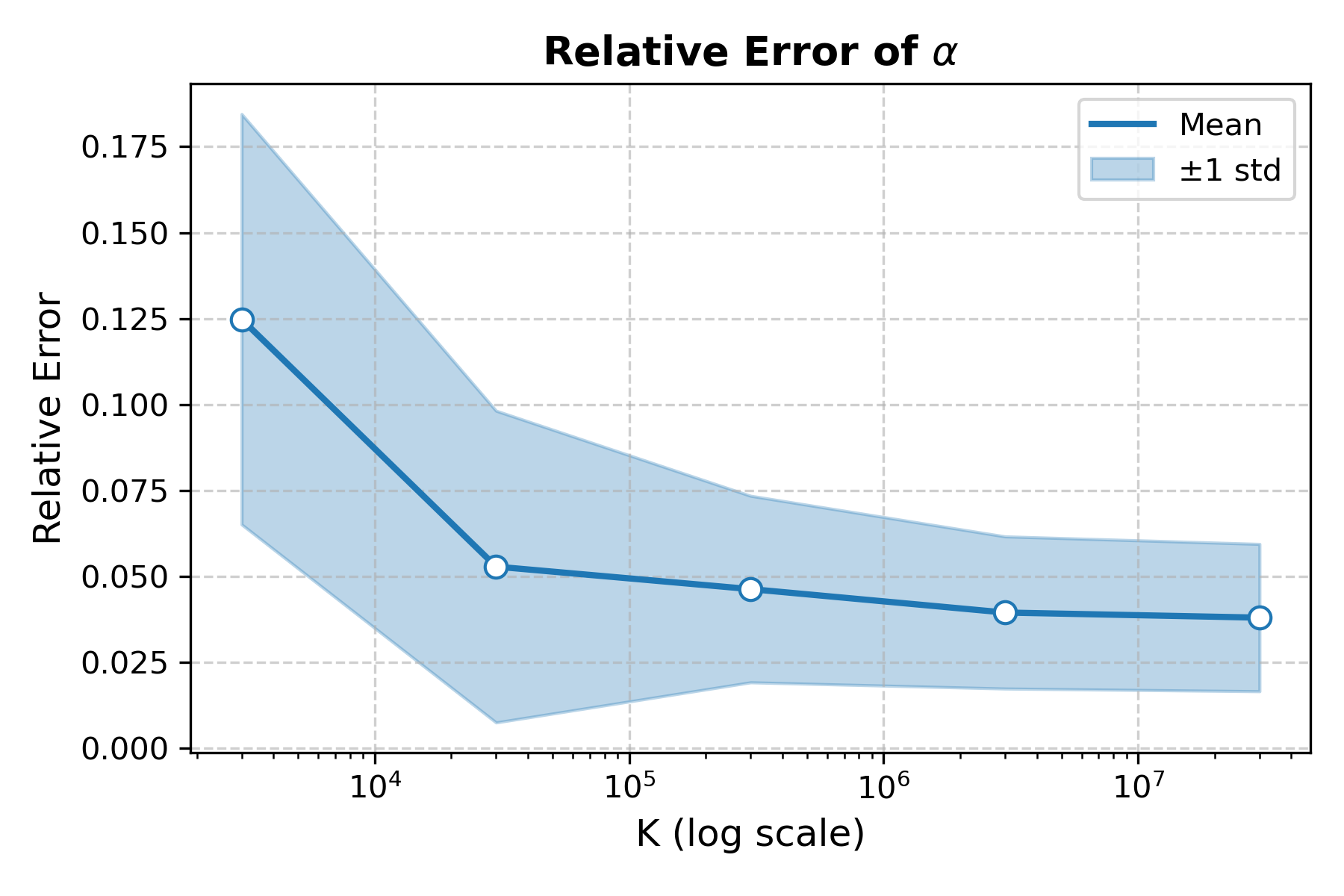}
    \includegraphics[width=0.48\linewidth]{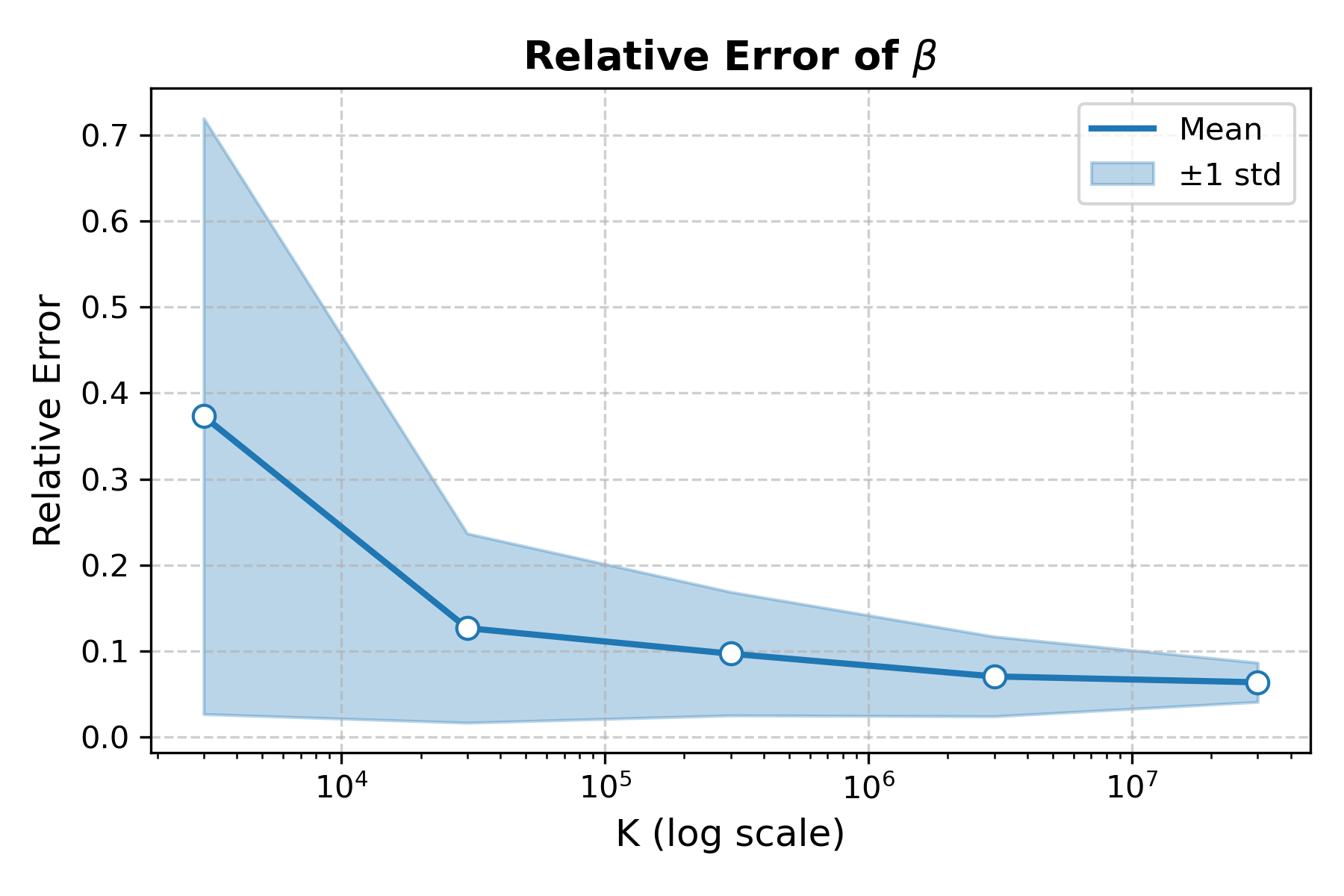} \\
    \vspace{2mm}
    \includegraphics[width=0.48\linewidth]{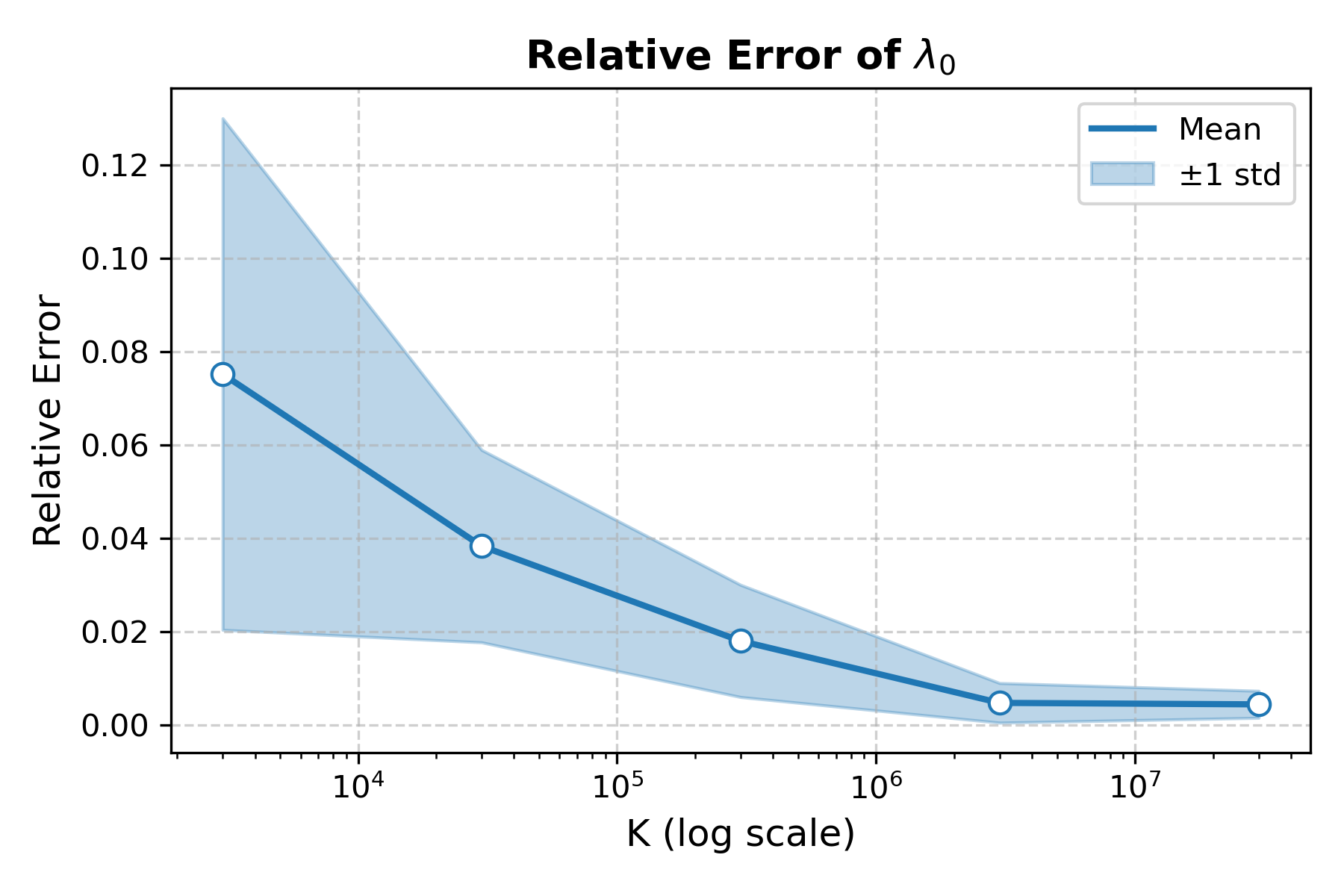}
    \includegraphics[width=0.48\linewidth]{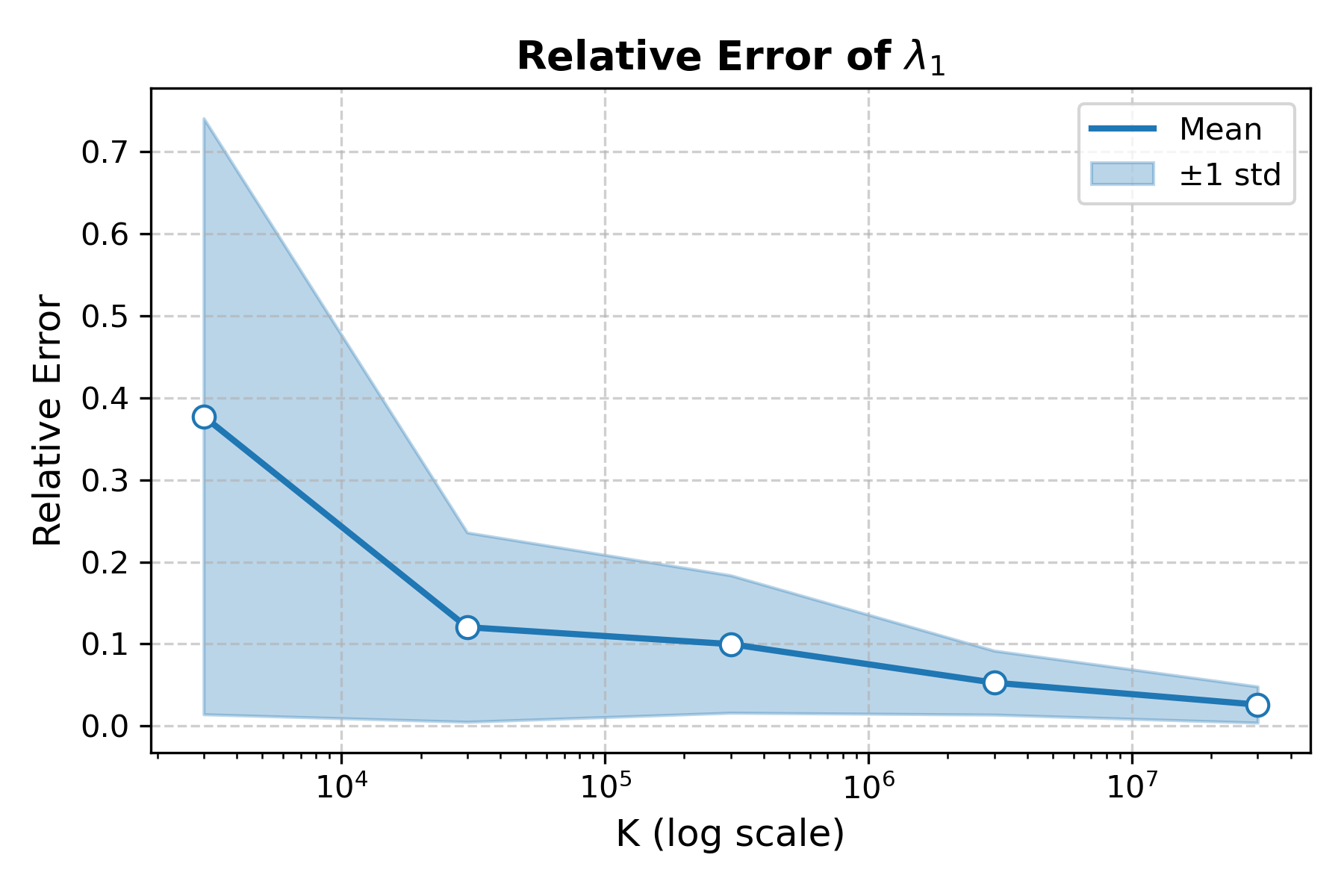}
\caption{Relative error of parameter estimators for increasing system sizes \( n = 10^3, 10^4, 10^5, 10^6, 10^7 \). Parameter values: \( \alpha = 0.8 \), \( \beta = 0.5 \), \( \lambda_0 = -0.5 \), \( \lambda_1 = 0.5 \), \( k = 3 \). Solid lines: mean relative error. Shaded areas: $\pm 1$ standard deviation.}
\label{fig:thm2}
\end{figure}

\subsection{Robustness Analysis}
To evaluate the robustness of the proposed estimators, we perform simulations with parameters sampled from the following ranges:  \( \lambda_0 \in (-0.9, -0.1) \), \( \lambda_1 \in (0.1, 0.9) \), \( \alpha \in (0.5, 0.9) \), \( \beta \in (0.1, 0.9) \), and \( k \in (1.5, 6.5) \). For each randomly generated parameter set, we impose the theoretical constraints required by Theorem~\ref{thm: estimators}, specifically \( \beta > 1 + \frac{\lambda_1}{\lambda_0} \) and \( \alpha + \beta > 1 \). Parameter combinations failing to satisfy these conditions are discarded and resampled. We fix the initial sensitive cell population at \( n = 5 \times 10^6 \) to balance computational tractability with biological realism and estimator accuracy. While moderate, this system size remains sufficient to capture statistically meaningful trends in estimator performance across diverse parameter regimes.

% To assess the robustness of our proposed estimators across a wide range of parameter settings, we conduct simulations under randomized parameter configurations. Specifically, we independently sample each parameter from the following intervals: \( \lambda_0 \in (-0.9, -0.1) \), \( \lambda_1 \in (0.1, 0.9) \), \( \alpha \in (0.5, 0.9) \), \( \beta \in (0.1, 0.9) \), and \( k \in (1.5, 6.5) \). For each sampled parameter set, we enforce the theoretical conditions required in Theorem~\ref{thm: estimators}, namely \( \beta > 1 + \frac{\lambda_1}{\lambda_0} \) and \( \alpha + \beta > 1 \); parameter sets that do not satisfy these conditions are discarded.

% We fix the initial sensitive cell population size at \( n = 5 \times 10^6 \) to strike a balance between computational feasibility and estimation accuracy. Although this value of \( n \) is moderate, it still allows us to capture meaningful trends in estimator performance across varying regimes.

% Each simulation records the relative error of all four estimators—\( \hat{\alpha} \), \( \hat{\beta} \), \( \hat{\lambda}_0 \), and \( \hat{\lambda}_1 \)—as defined earlier. In Figure~\ref{fig:robust_estimators}, we visualize these results using scatter plots, where each blue dot corresponds to the relative error from a single simulation instance. Histogram bars show the average relative error within parameter bins, and the red horizontal line indicates the global mean relative error.

For each simulation, we compute the relative error for all four estimators (\( \hat{\alpha} \), \( \hat{\beta} \), \( \hat{\lambda}_0 \), and \( \hat{\lambda}_1 \)). Figure~\ref{fig:robust_estimators} visualizes the simulation results using scatter plots: each blue point represents the relative error from an individual simulation run. Binned averages of relative errors are displayed as histogram bars, while the red horizontal line denotes the global mean relative error. The results demonstrate that the relative error remains consistently low across the full spectrum of tested parameter values. We observe no systematic bias or performance deterioration as parameters vary, suggesting that the estimators retain high accuracy and robustness. These findings provide strong empirical evidence for the reliability of our estimation framework across a biologically plausible parameter space.

% The results demonstrate that the relative error remains consistently low across the entire range of parameter values. There is no evident systematic bias or performance degradation as parameters vary, suggesting that the estimators maintain strong accuracy and stability. These findings provide empirical support for the robustness of our estimators across biologically plausible parameter ranges.

\begin{figure}[ht!]
\centering
    \includegraphics[width=0.45\linewidth]{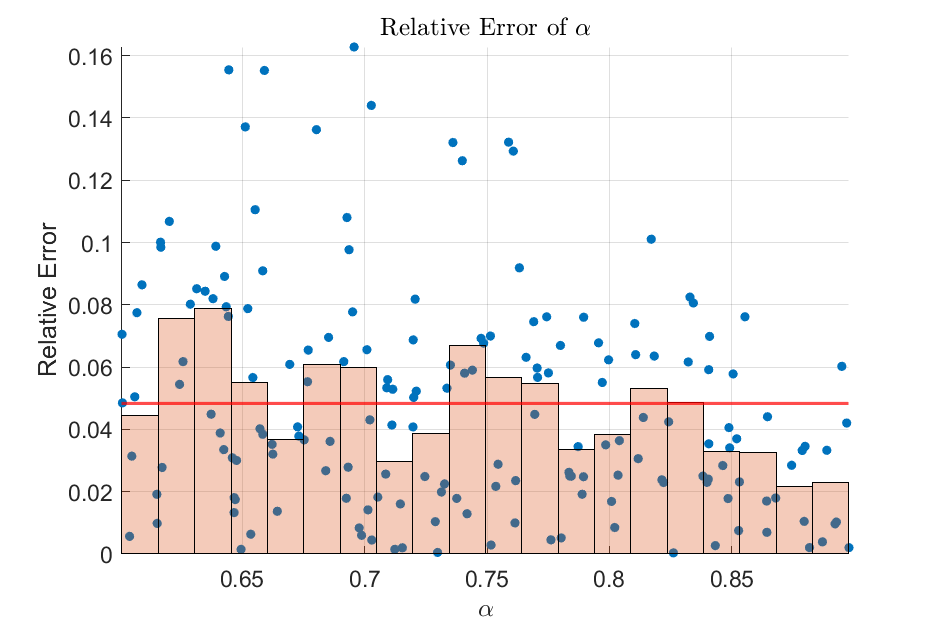}
    \includegraphics[width=0.45\linewidth]{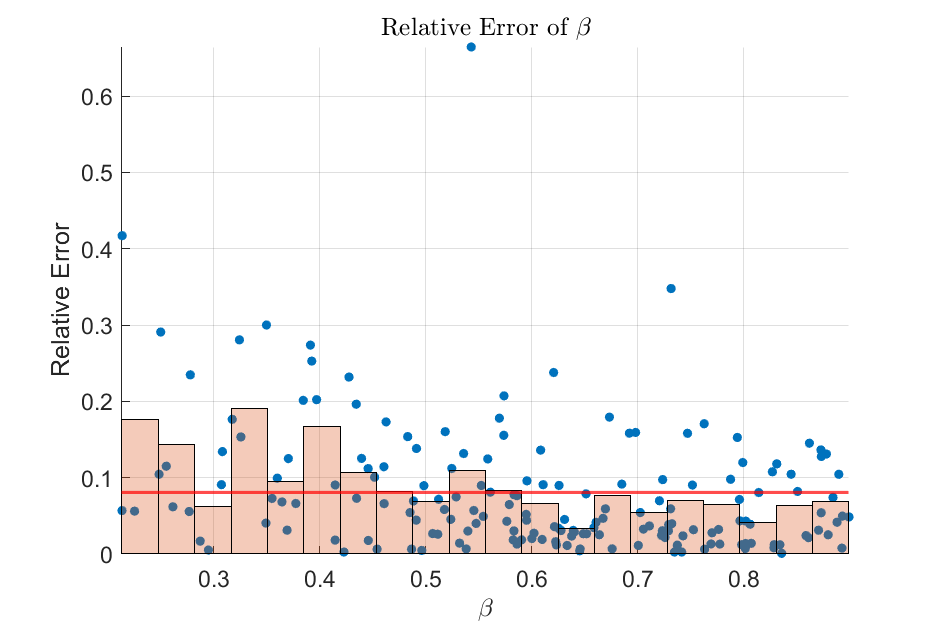} \\
    \vspace{2mm}
    \includegraphics[width=0.45\linewidth]{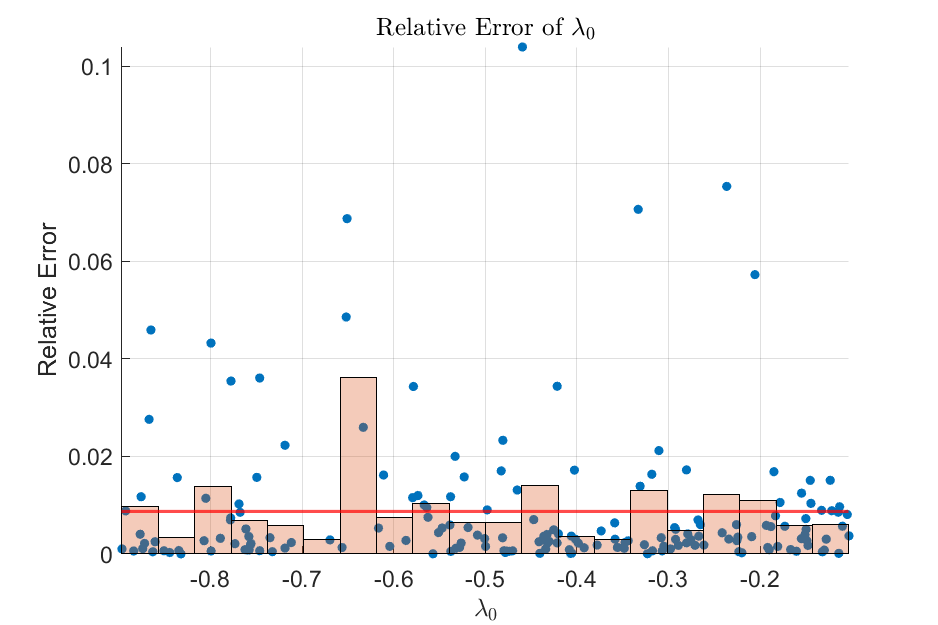}
    \includegraphics[width=0.45\linewidth]{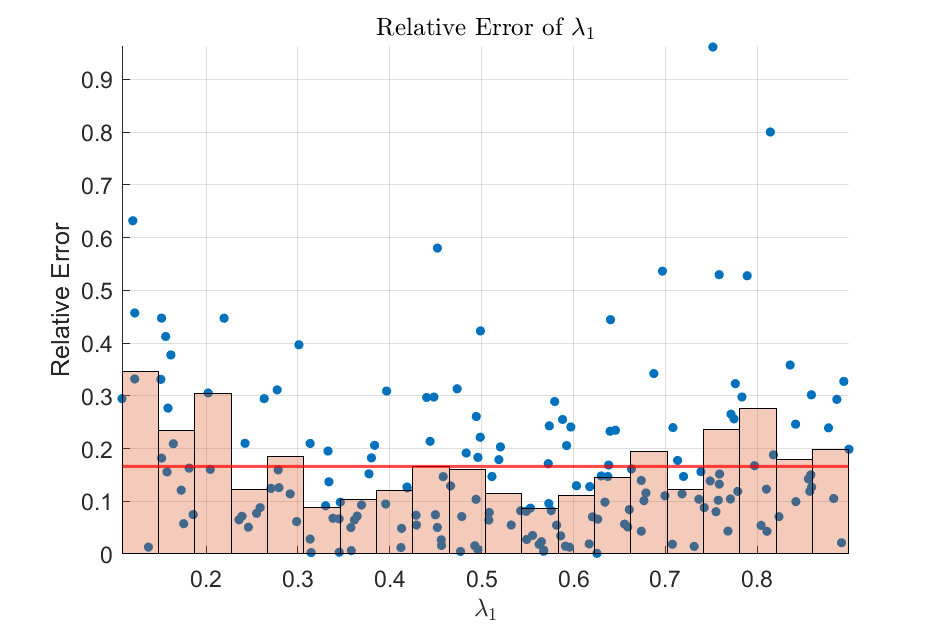}
    \caption{Relative errors of estimators across randomized parameter settings. Parameters are sampled from: \( \lambda_0 \in (-0.9, -0.1) \), \( \lambda_1 \in (0.1, 0.9) \), \( \alpha \in (0.5, 0.9) \), \( \beta \in (0.1, 0.9) \), and \( k \in (1.5, 6.5) \). The sample size is fixed at \( n = 5 \times 10^6 \). Blue dots represent individual simulation runs; histogram bars represent bin-wise mean errors; the red line represents the overall mean error.}
    \label{fig:robust_estimators}
\end{figure}

Although our estimators are theoretically independent of the carrying capacity scaling factor \( k \), it remains necessary to examine whether variation in \( k \) indirectly affect their performance. Intuitively, a smaller value of \( k \) corresponds to stricter resource constraints, which could lead to stronger non-linear effects and potentially reduce the accuracy of the deterministic ODE approximation. However, as shown in Figure~\ref{fig:mean_rel_err_vs_k}, the mean relative error across all four estimators remains low over a wide range of $k$, with no evident degradation in accuracy.

% Although our estimators do not directly depend on the carrying capacity scaling factor \( k \), it is still important to examine whether variation in \( k \) has any secondary effects on estimation performance. Intuitively, a smaller \( k \) implies a tighter carrying capacity, which could lead to stronger non-linear effects and thus potentially worse approximation by the deterministic ODE system. 

% However, Figure~\ref{fig:mean_rel_err_vs_k} demonstrates that the average relative error across all four estimators remains stable across a wide range of \( k \), with no strong trend indicating degradation in accuracy. In particular, even when \( k \) is small (e.g., near 1.5), the relative error remains comparable to larger \( k \)-values. This robustness can be attributed to the asymptotic regime: as \( n \to \infty \), the influence of the carrying capacity becomes increasingly negligible in the normalized stochastic dynamics, which aligns with our theoretical framework that does not require knowledge of \( k \) for estimation.

% These results further validate the practical applicability of our estimators, showing that their performance remains reliable across a broad range of biologically plausible carrying capacities.
\begin{figure}[h!]
    \centering
    \includegraphics[width=0.7\linewidth]{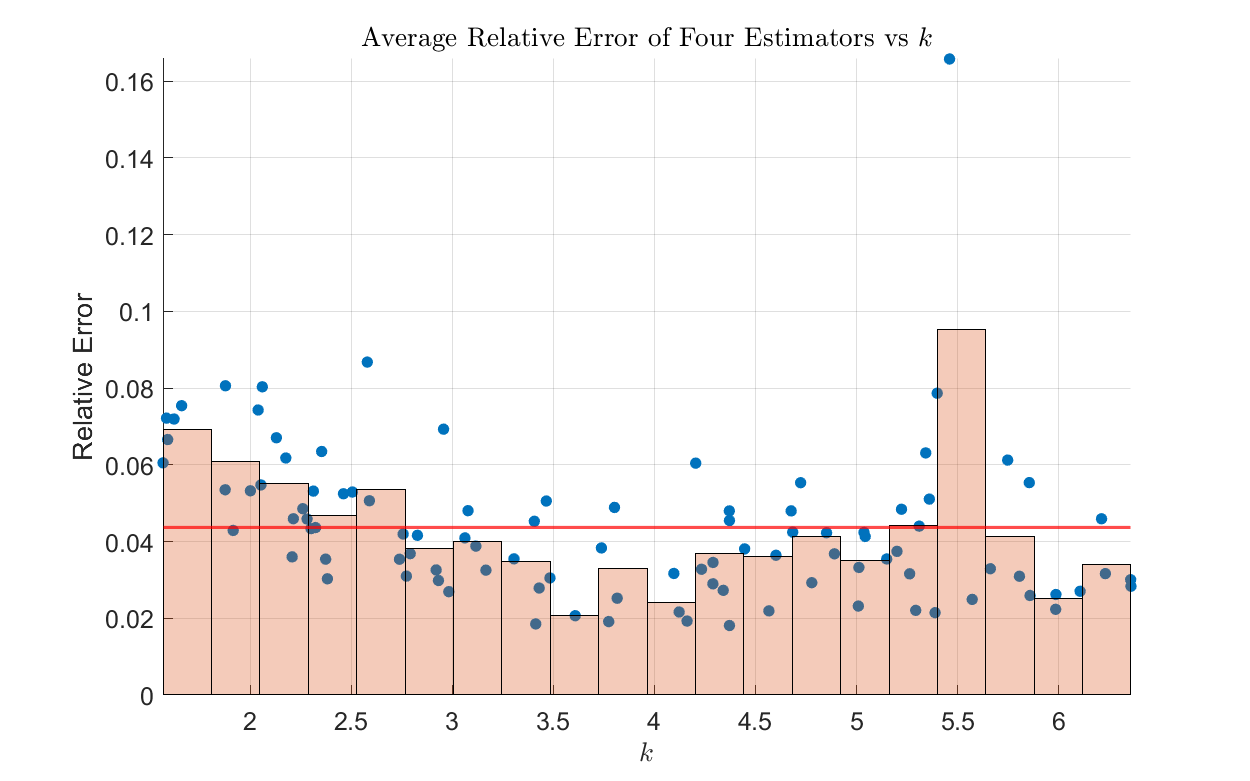}
    \caption{Effect of the carrying capacity scaling factor \( k \) on estimator performance. The vertical axis displays the average relative error across all four estimators \( (\hat{\alpha}, \hat{\beta}, \hat{\lambda}_0, \hat{\lambda}_1) \). Blue dots represent individual simulation runs; histogram bars represent bin-wise mean errors; the red line represents the overall mean error.}
    \label{fig:mean_rel_err_vs_k}
\end{figure}

% \subsection{Unobservable Clones}
% \textcolor{red}{Impractical. The majority of clones less than 2\% of n: alpha = 0.8, beta = 0.5. Try bigger alpha and smaller beta }
\newpage

\begin{appendix}
    \section{Proof of Proposition \ref{prop:zeta}}
    \label{sec:pf_prop_zeta}
\begin{lemma}
\label{lem:zeta_upper_bound}
    There exists a constant $C>0$, independent of $n$, such that 
    \begin{align*}
        \zeta_n < C+\frac{\min\left\{  1-\beta,\alpha\right\}}{\lambda_1}\log n.
    \end{align*}  
\end{lemma}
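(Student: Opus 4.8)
The plan is to bound $\zeta_n$ from above by producing a fast-growing \emph{lower} bound for $y_1$ and reading off the first time it crosses the recurrence level $1/k$. Writing $y_1 = y_\beta + y_m$ with $y_m := y_1 - y_\beta \ge 0$ solving $\dot y_m = \phi(y)\,y_m + n^{-\alpha}y_0$, $y_m(0)=0$, we have $y_1 \ge \max\{y_\beta,\,y_m\}$. Hence it suffices to show that $y_\beta$ reaches $1/k$ by time $\tfrac{1-\beta}{\lambda_1}\log n + C$ and that $y_m$ reaches $1/k$ by time $\tfrac{\alpha}{\lambda_1}\log n + C$; since $y_1$ dominates both, $\zeta_n$ is at most the smaller of the two times, which produces the $\min\{1-\beta,\alpha\}$ in the statement. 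The common engine is the lower growth bound $\phi(y(t)) \ge \lambda_1\left(1-\sigma(t)\right)$ from (A5), where $\sigma := y_0 + y_1$. Integrating $\tfrac{d}{dt}\log y_\beta = \phi(y) \ge \lambda_1 - \lambda_1\sigma$ shows that the effective exponential rate is $\lambda_1$ minus a correction governed by $\int_0^t \sigma$, so the whole argument hinges on controlling this integral.

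The crux is therefore the uniform-in-$n$ estimate $\int_0^{\zeta_n}\sigma(s)\,ds \le M < \infty$, which forces the rate deficit into the additive constant $C$ rather than the coefficient of $\log n$. For the sensitive part this is immediate, since $y_0(s) = \tfrac1k e^{\lambda_0 s}$ with $\lambda_0<0$ gives $\int_0^\infty y_0 \le \tfrac{1}{k|\lambda_0|}$. The delicate term is $\int_0^{\zeta_n} y_1$, because the interval has length $\Theta(\log n)$ and the naive bound $y_1 < 1/k$ only yields $\Theta(\log n)$, which would corrupt the leading coefficient. I would control it algebraically. From the $y_1$-equation, $\phi(y)y_1 = \dot y_1 - n^{-\alpha}y_0 \le \dot y_1$, so for every $t \le \zeta_n$,
\[
\int_0^{t}\phi(y)y_1\,ds \le y_1(t) \le \frac1k ,
\]
using $y_1 < 1/k$ before recurrence. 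On the other hand, multiplying (A5) by $y_1 \ge 0$ gives $\phi(y)y_1 \ge \lambda_1 y_1 - \lambda_1 y_0 y_1 - \lambda_1 y_1^2$; bounding $y_0 y_1 \le \tfrac1k y_0$ and $y_1^2 \le \tfrac1k y_1$ and combining with the previous display yields
\[
\lambda_1\left(1-\tfrac1k\right)\int_0^{t} y_1(s)\,ds \le \frac1k + \frac{\lambda_1}{k^2|\lambda_0|}.
\]
Since $k>1$, this bounds $\int_0^{\zeta_n} y_1$ by an explicit constant, with no circular appeal to the size of $\zeta_n$.

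With $\int_0^t\sigma \le M$ available for all $t\le\zeta_n$, the conclusion follows by two Grönwall/Duhamel computations. Integrating $\tfrac{d}{dt}\log y_\beta \ge \lambda_1 - \lambda_1\sigma$ gives $y_\beta(t) \ge \tfrac1k\, n^{\beta-1} e^{\lambda_1 t - \lambda_1 M}$, which exceeds $1/k$ once $t \ge \tfrac{1-\beta}{\lambda_1}\log n + M$; if $\zeta_n$ exceeded this time then $y_1 \ge y_\beta \ge 1/k$ before recurrence, a contradiction, so the $1-\beta$ bound holds. For the mutational part I would use the Duhamel representation $y_m(t) = \int_0^t n^{-\alpha}y_0(s)\exp\left(\int_s^t \phi\right)ds$ together with $\int_s^t \phi \ge \lambda_1(t-s) - \lambda_1 M$ and $\int_0^t y_0(s)e^{-\lambda_1 s}\,ds \ge \tfrac{1}{2k(\lambda_1-\lambda_0)}$ for $t$ beyond a fixed constant, obtaining $y_m(t) \ge c\,n^{-\alpha}e^{\lambda_1 t}$, which exceeds $1/k$ once $t \ge \tfrac{\alpha}{\lambda_1}\log n + C'$. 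Taking $C = \max\{M,C'\}$ and the smaller of the two threshold times gives the claim. I expect the main obstacle to be exactly the $\int_0^{\zeta_n} y_1$ bound: the algebraic cancellation using (A5) and the recurrence threshold is what keeps the $\log n$ coefficient sharp, whereas any cruder estimate degrades it. As a supporting (though not strictly necessary) fact, one can note that $\sigma$ never exceeds $1$ before recurrence, by a barrier argument at $\sigma=1$, where $\phi=0$ by (A2) and $\dot\sigma = (\lambda_0+n^{-\alpha})y_0 \le 0$, so that $\phi(y)\ge 0$ throughout and the monotonicity used above is literal.
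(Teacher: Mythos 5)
Your proof is correct, but it takes a genuinely different route from the paper's. The paper constructs an explicit two-phase auxiliary lower solution $\bar{y}_1$: on an initial window $[0,\epsilon\log n]$ it freezes the growth rate at $\bar{\lambda}_1=\min_{t\le \epsilon\log n}\phi(y(t))$ (identified via monotonicity of $y_0+y_1$ and (A3)), and afterwards it switches to the logistic comparison ODE from (A5); both phases are solved in closed form, and the crossing time is extracted through a case analysis on whether $1-\beta<\alpha$ or $1-\beta\ge\alpha$. You instead split $y_1=y_\beta+y_m$ and reduce everything to the single uniform estimate $\int_0^{\zeta_n}(y_0+y_1)\,ds\le M$, obtained algebraically by integrating $\phi(y)y_1=\dot y_1-n^{-\alpha}y_0\le \dot y_1\le 1/k$ against the (A5) lower bound $\phi(y)y_1\ge\lambda_1 y_1-\tfrac{\lambda_1}{k}(y_0+y_1)$ below the recurrence threshold; this converts the density dependence into an additive constant $\lambda_1 M$ in the exponent, after which elementary Gr\"onwall/Duhamel lower bounds on $y_\beta$ and $y_m$ deliver the two threshold times, and the $\min\{1-\beta,\alpha\}$ emerges automatically from whichever component hits $1/k$ first, with no case split and no explicit ODE solutions. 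Your argument is more elementary and uses less of Assumption \ref{assump:f} (essentially only (A5), with (A2) relegated to an optional barrier remark, whereas the paper also invokes (A3)); its one structural caveat is that the integral bound is only valid up to $\zeta_n$, so the hitting-time conclusions must be phrased as contradiction arguments, which you do correctly. What the paper's heavier construction buys is not the lemma itself but its byproducts: the monotonicity bound \eqref{eq:upper_bound_y0plusy1} and especially the reciprocal bound \eqref{eq:upper_bound_1_over_y1} on $1/y_1(t)$, which are reused repeatedly in the proofs of Propositions \ref{prop:ybeta}--\ref{prop:conv_Xbeta} and Theorem \ref{thm:ratio_conv}; your pointwise lower bounds $y_\beta(t)\ge \tfrac1k n^{\beta-1}e^{\lambda_1(t-M)}$ and $y_m(t)\ge c\,n^{-\alpha}e^{\lambda_1 t}$ could be inverted to recover essentially the same information, but as written your proposal establishes only the lemma.
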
  
\begin{proof}  
    % In the following analysis, we use notation and specify the function of growth rate: $\lambda_1(z) = \lambda_1(z_0,z_1)= \lambda_1(1-(z_0+z_1)/K)$. 
It is important to note that the function \( \phi \) in the ODE system~\eqref{ODEs} is not explicitly known, which precludes direct analytical treatment of the system. However, using the definition of the deterministic recurrence time \( \zeta_n \) and \textit{(A5)} of Assumption \ref{assump:f}, we can obtain an upper bound for \( \zeta_n \) via a lower bound for the solution \( y_1(t) \). In what follows, we construct an auxiliary function \( \bar{y}_1(t) \) that serves as a lower bound for \( y_1(t) \).

% To this end, we first construct an auxiliary function \( \bar{y}_1(t) \) that serves as a lower bound for the ODE solution.
% we can obtain meaningful insights by studying a lower bound of the solution \( y_1(t) \). In particular, deriving a lower bound for \( y_1(t) \) enables us to establish an upper bound for \( \zeta_n \). 

For \( \epsilon > 0 \), define \( \bar{\zeta}_n = \epsilon \log n \), and let  
\[
    \bar{\lambda}_1 = \min_{0 \leq t \leq \bar{\zeta}_n} \{ \phi(y(t)) \}.
\]
We know that \( Ky_0(t) = n e^{\lambda_0 t} \). Because \( \phi(Ky) \leq \lambda_1 \), we also have
\[
    Ky_1(t) \leq n^{\beta} e^{\lambda_1 t} + \frac{n^{1 - \alpha}}{\lambda_1 - \lambda_0} \left( e^{\lambda_1 t} - e^{\lambda_0 t} \right).
\]
Let 
\[
    g(t) = n e^{\lambda_0 t} + n^{\beta} e^{\lambda_1 t} + \frac{n^{1 - \alpha}}{\lambda_1 - \lambda_0} \left( e^{\lambda_1 t} - e^{\lambda_0 t} \right).
\]
We have
\[
    g'(t) = \lambda_0 \left( n - \frac{n^{1 - \alpha}}{\lambda_1 - \lambda_0} \right) e^{\lambda_0 t} + \lambda_1 \left( n^{\beta} + \frac{n^{1 - \alpha}}{\lambda_1 - \lambda_0} \right) e^{\lambda_1 t}.
\]
% Then,
% \[
%     g'(t) \leq 0 \Longleftrightarrow e^{(\lambda_1 - \lambda_0)t} \leq \frac{-\lambda_0 \left( n - \frac{n^{1 - \alpha}}{\lambda_1 - \lambda_0} \right)}{\lambda_1 \left( n^{\beta} + \frac{n^{1 - \alpha}}{\lambda_1 - \lambda_0} \right)}.
% \]
% For sufficiently large \( n \), we have:
% \[
%     \frac{-\lambda_0 \left( n - \frac{n^{1 - \alpha}}{\lambda_1 - \lambda_0} \right)}{\lambda_1 \left( n^{\beta} + \frac{n^{1 - \alpha}}{\lambda_1 - \lambda_0} \right)}
%     \geq \frac{-\lambda_0}{\lambda_1 + \frac{\lambda_1}{\lambda_1 - \lambda_0}} n^{\min\{1 - \beta, \alpha\}} + \frac{\lambda_0}{\lambda_1}
%     \geq \frac{-\lambda_0}{2\lambda_1 + \frac{2\lambda_1}{\lambda_1 - \lambda_0}} n^{\min\{1 - \beta, \alpha\}}.
% \]
One can verify that for 
\[
    t < \frac{\min\{1 - \beta, \alpha\}}{\lambda_1 - \lambda_0} \log n + \frac{\log \left( \frac{-\lambda_0}{2\lambda_1 + \frac{2\lambda_1}{\lambda_1 - \lambda_0}} \right)}{\lambda_1 - \lambda_0},
\]
the inequality \( g'(t) \leq 0 \) holds. In conclusion, we establish that for sufficiently large \( n \) (specifically, for \( n \) larger than a constant depending on \( \lambda_0, \lambda_1, \alpha \), and \( \beta \)), the following holds:  
If 
\[
t \leq \frac{\min\{1 - \beta, \alpha\}}{2(\lambda_1 - \lambda_0)} \log n,
\]
then \( g'(t) < 0 \). This implies that for any \( \epsilon < \frac{\min\{1 - \beta, \alpha\}}{2(\lambda_1 - \lambda_0)} \), if \( t< \bar{\zeta}_n\), we have
\begin{align}
    \label{eq:upper_bound_y0plusy1}
y_0(t) + y_1(t) \leq g(t)/K \leq g(0)/K = n/K + n^\beta/K.
\end{align}
Moreover, by \textit{(A3)} of Assumption~\ref{assump:f}, we have \( \bar{\lambda}_1 = \phi(n/K, n^\beta/K) \).

We now construct an auxiliary trajectory \( \bar{y}_1(t) \), defined as the solution to the following piecewise system:
\begin{align}
\label{ODEs:y_bar}
    \begin{cases}
        \dfrac{d\bar{y}_1}{dt} = \bar{\lambda}_1 \bar{y}_1 + n^{-\alpha} y_0(t), & \text{for } t \leq \bar{\zeta}_n, \\
        \dfrac{d\bar{y}_1}{dt} = \lambda_1 \left( 1 - (y_0(t) + \bar{y}_1(t)) \right) \bar{y}_1(t), & \text{for } t > \bar{\zeta}_n,
    \end{cases}
\end{align}
subject to the initial condition \( K \bar{y}_1(0) = n^\beta \). By \textit{(A5)} of Assumption~\ref{assump:f},
\[
\frac{d\bar{y}_1}{dt} \leq \frac{dy_1}{dt}
\]
whenever \( \bar{y}_1(t) = y_1(t) \). This monotonicity property ensures that \( \bar{y}_1(t) \leq y_1(t) \) for all \( t \geq 0 \). 

We now proceed to analyze the behavior of $\bar{y}_1(t)$. For the first phase (\( t \leq \bar{\zeta}_n \)), we can solve the equation explicitly:
\begin{align}
    K \bar{y}_1(t) = \frac{n^{1 - \alpha}}{\lambda_0 - \bar{\lambda}_1} e^{\lambda_0 t} 
    + \left( n^\beta + \frac{n^{1 - \alpha}}{\bar{\lambda}_1 - \lambda_0} \right) e^{\bar{\lambda}_1 t}.
\end{align}
Hence, at time \( \bar{\zeta}_n \), we have:
\begin{align}
\label{eq:bar_y1_bar_zeta}
    K \bar{y}_1(\bar{\zeta}_n) 
    = n^{\beta + \bar{\lambda}_1 \epsilon} 
    + \frac{1}{\bar{\lambda}_1 - \lambda_0} \left( n^{1 - \alpha + \bar{\lambda}_1 \epsilon} - n^{1 - \alpha + \lambda_0 \epsilon} \right)<n.
\end{align}
We then consider the second phase (\( t > \bar{\zeta}_n \)). Define \( \bar{y} = \bar{y}_1(\bar{\zeta}_n) \) for convenience. Then, for \( t > 0 \), the solution satisfies:
\begin{align}
    \bar{y}_1(t + \bar{\zeta}_n) 
    &= \frac{\bar{y} e^{\lambda_1 t} \exp\left( \frac{\lambda_1 n}{-\lambda_0 K} e^{\lambda_0 t} \right)}{\bar{y} \lambda_1 \int_0^t e^{\lambda_1 u} \exp\left( \frac{\lambda_1 n}{-\lambda_0 K} e^{\lambda_0 u} \right) du + \exp\left( \frac{\lambda_1 n}{-\lambda_0 K} \right)}.
\end{align}
To simplify the expression, let \( \mu = \frac{\lambda_1 n}{-\lambda_0 K} \). Then, for any \( s \in (0, t) \), we bound the integral as follows:
\begin{align*}
    \int_0^t e^{\lambda_1 u} \exp\left( \mu e^{\lambda_0 u} \right) du 
    &\leq \int_0^{s} e^{\lambda_1 u} e^{\mu} du + \int_{s}^t e^{\lambda_1 u} \exp\left(\mu e^{\lambda_0 s}\right) du \\
    &= \frac{e^{\mu}}{\lambda_1} \left( e^{\lambda_1 s} - 1 \right)
    + \frac{\exp\left(\mu e^{\lambda_0 s}\right)}{\lambda_1} \left( e^{\lambda_1 t} - e^{\lambda_1 s} \right).
\end{align*}
Therefore, we can obtain the following lower bound:
\begin{align*}
    \bar{y}_1(t + \bar{\zeta}_n) 
    &\geq \frac{\bar{y} e^{\lambda_1 t}}{\bar{y} \lambda_1 \int_0^t e^{\lambda_1 u} \exp\left( \mu e^{\lambda_0 u} \right) du + e^{\mu}} \\
    &\geq \frac{e^{\lambda_1 t}}{\exp\left(\mu e^{\lambda_0 s}\right) e^{\lambda_1 t} + e^{\mu} \left( \bar{y}^{-1} - 1 \right) + e^{\lambda_1 s} \left( e^{\mu} - \exp\left(\mu e^{\lambda_0 s}\right) \right)}.
\end{align*}
Recall that \( K(n) = k n \), where \( k > 1 \). Because \( k>1 \), \( \lambda_0 < 0 \) and \(\mu>0\) are all constants, there exists a constant \( \theta > 0 \) such that
\(\exp\left( \mu e^{\lambda_0 \theta} \right) < k\). To streamline notation, define
\[
\nu := \exp\left( \mu e^{\lambda_0 \theta} \right), \quad
\xi := e^{\lambda_1 \theta} \left( e^{\mu} - \exp\left( {\mu} e^{\lambda_0 \theta} \right) \right) > 0.
\]
Then for all \( t > \theta \), we obtain the lower bound:
\begin{align}
\label{eq:lower_bound_y1}
    \bar{y}_1(t + \bar{\zeta}_n) 
    \geq \frac{e^{\lambda_1 t}}{\nu e^{\lambda_1 t} + e^{\mu}\left(\bar{y}^{-1} - 1\right) + \xi}.
\end{align}

Now suppose there exists \( \tilde{\zeta}_n > 0 \) such that \( K \bar{y}_1(\tilde{\zeta}_n + \bar{\zeta}_n) = n \), which exists because \(\exp\left( \mu e^{\lambda_0 \theta} \right) < k\). Then $\tilde{\zeta}_n$ must satisfy:
\begin{align*}
    e^{\lambda_1 \tilde{\zeta}_n} \leq \frac{e^{\mu}\left(\bar{y}^{-1} - 1\right) + \xi}{k - \nu}.
\end{align*}
% Also, if we let  
% \[
% \bar{\lambda} := 1 - \frac{\bar{\lambda}_1}{2(\lambda_1 - \lambda_0)} > 0,
% \quad \text{and} \quad \epsilon < \frac{\min\{1 - \beta, \alpha\}}{2(\lambda_1 - \lambda_0)}.
% \]
% then we will have:
% \begin{align*}
%     \frac{1}{\bar{y}} - 1 
%     &= \frac{K}{n} \cdot \frac{n}{K \bar{y}} - 1 \\
%     &= \frac{K}{n} \cdot \frac{1}{n^{\beta + \bar{\lambda}_1 \epsilon - 1} 
%     + \dfrac{1}{\bar{\lambda}_1 - \lambda_0} \left( n^{-\alpha + \bar{\lambda}_1 \epsilon} - n^{-\alpha + \lambda_0 \epsilon} \right)} - 1 \\
%     &\geq C n^{\bar{\lambda} \min\{1 - \beta, \alpha\}},
% \end{align*}
Moreover, one may verify that for sufficiently small $\epsilon>0$, \(\bar{y}^{-1} \rightarrow \infty\) as $n\rightarrow \infty$. Hence, for large enough \( n \), there exists a positive constant $C$ such that
\[
 e^{\lambda_1 \tilde{\zeta}_n} \leq C\bar{y}^{-1} .
\]

% Also, it's easy to verify that for sufficiently small $\epsilon$, \(\bar{y}^{-1} \xrightarrow[]{n} \infty\). Therefore, for large \( n \) (independent of $\epsilon$ as long as $\epsilon$ smaller than a threshold), we have the bound:
% \[
%  e^{\lambda_1 \tilde{\zeta}_n} \leq C\bar{y}^{-1} .
% \]

Next, we consider two cases depending on the relative magnitude of  \( 1 - \beta \) and \( \alpha \):

\noindent\textbf{(1)} \( \boldsymbol{1 - \beta < \alpha } \) 

In this case, we have
\[
\bar{y}^{-1}  \leq k \cdot n^{1 - \beta - \bar{\lambda}_1 \epsilon},
\]
which implies
\[
e^{\lambda_1 \tilde{\zeta}_n} \leq Ck \cdot n^{1 - \beta - \bar{\lambda}_1 \epsilon}.
\]
Taking logarithms yields
\[
\tilde{\zeta}_n \leq \frac{1}{\lambda_1} \log\left( Ck \right) + \frac{1 - \beta - \bar{\lambda}_1 \epsilon}{\lambda_1} \log n.
\]
Since \( \bar{y}_1(t) \leq y_1(t) \), it follows that
\[
\zeta_n < \bar{\zeta}_n + \tilde{\zeta}_n 
\leq \frac{\lambda_1 - \bar{\lambda}_1}{\lambda_1} \epsilon \log n 
+ \frac{1}{\lambda_1} \log\left( Ck \right)
+ \frac{1 - \beta}{\lambda_1} \log n.
\]
Taking the limit as \( \epsilon \to 0 \), we conclude:
\[
\zeta_n \leq \frac{1}{\lambda_1} \log\left( Ck \right)
 + \frac{1 - \beta}{\lambda_1} \log n.
\]

\noindent\textbf{(2)} \( \boldsymbol{1 - \beta \geq \alpha } \) 

In this case, we have:
\[
\bar{y}^{-1} \leq k \cdot \frac{\bar{\lambda}_1 - \lambda_0}{n^{-\alpha + \bar{\lambda}_1 \epsilon} - n^{-\alpha + \lambda_0 \epsilon}},
\]
which implies
\begin{align*}
    e^{\lambda_1 \tilde{\zeta}_n} 
    &\leq Ck \cdot \frac{\bar{\lambda}_1 - \lambda_0}{n^{-\alpha + \bar{\lambda}_1 \epsilon} - n^{-\alpha + \lambda_0 \epsilon}} \\
    &= Ck \cdot \frac{(\bar{\lambda}_1 - \lambda_0) n^{\alpha - \bar{\lambda}_1 \epsilon}}{1 - n^{(\lambda_0 - \bar{\lambda}_1) \epsilon}}.
\end{align*}
Setting \( \epsilon = \frac{1}{\log n} \), we derive
\[
e^{\lambda_1 \tilde{\zeta}_n} \leq Ck \cdot n^{\alpha} \cdot e^{-\bar{\lambda}_1} \cdot\frac{\bar{\lambda}_1 - \lambda_0}{1 - e^{\lambda_0 - \bar{\lambda}_1}}.
\]
Taking logarithms yields
\[
\tilde{\zeta}_n \leq \frac{1}{\lambda_1} \log\left( Ck \right) 
+ \frac{\alpha}{\lambda_1} \log n - \frac{\bar{\lambda}_1}{\lambda_1} + \frac{1}{\lambda_1} \log\left(\frac{\bar{\lambda}_1 - \lambda_0}{1 - e^{\lambda_0 - \bar{\lambda}_1}}\right).
\]
Because \( \bar{\zeta}_n = \epsilon \log n = \frac{1}{\log n} \log n = 1 \), we conclude
\[
\zeta_n < \bar{\zeta}_n + \tilde{\zeta}_n 
\leq 1 - \frac{\bar{\lambda}_1}{\lambda_1}
+ \frac{1}{\lambda_1} \log\left(\frac{\bar{\lambda}_1 - \lambda_0}{1 - e^{\lambda_0 - \bar{\lambda}_1}}\right)
+ \frac{1}{\lambda_1} \log\left(Ck \right)
+ \frac{\alpha}{\lambda_1} \log n.
\]

In conclusion, define the constant
\[
\bar{C} := \max\left\{ 
\frac{1}{\lambda_1} \log\left( Ck \right),\ 
1 - \frac{\bar{\lambda}_1}{\lambda_1}
    + \frac{1}{\lambda_1} \log\left(\frac{\bar{\lambda}_1 - \lambda_0}{1 - e^{\lambda_0 - \bar{\lambda}_1}}\right)
    + \frac{1}{\lambda_1} \log\left( Ck \right)
\right\}.
\]
Then, in either case, we obtain the unified upper bound:
\[
\boxed{
\zeta_n < \bar{C} + \frac{\min\{1 - \beta, \alpha\}}{\lambda_1} \log n.
}
\]
\qed            
\end{proof}

In the proof of Lemma \ref{lem:zeta_upper_bound}, combining equations~\eqref{eq:bar_y1_bar_zeta} and~\eqref{eq:lower_bound_y1} and taking \( \epsilon = 1/\log n \), we derive the following upper bound for the inverse of \( y_1(t) \):
\begin{align}
\label{eq:upper_bound_1_over_y1}
\frac{1}{y_1(t)} \leq 
\begin{cases}
    c_1 n^{1 - \beta}, & \text{for } t \leq 1, \\
    c_2 + c_3 n^{\min\{1 - \beta, \alpha\}} e^{-\lambda_1 t}, & \text{for } t > 1,
\end{cases}
\end{align}
where \( c_1, c_2, c_3 \) are positive constants. This bound will be instrumental to the subsequent analysis. 

In Lemma \ref{lem:zeta_upper_bound}, we established an upper bound for \( \zeta_n \). Using similar arguments, a corresponding lower bound can be derived.
\begin{lemma}
\label{lem:zeta_lower_bound}
There exists a constant \( c > 0 \), independent of \( n \), such that for all sufficiently large \( n \),
\[
\zeta_n > -c + \frac{\min\left\{ 1 - \beta, \alpha \right\}}{\lambda_1} \log n.
\]
\end{lemma}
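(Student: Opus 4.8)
The plan is to mirror the argument of Lemma~\ref{lem:zeta_upper_bound}, but now bounding $y_1(t)$ from \emph{above} rather than below: an upper bound on the resistant population translates directly into a lower bound on the first time it reaches $n/K$. The key monotonicity fact is that by \textit{(A2)} and \textit{(A3)} of Assumption~\ref{assump:f} we have $f(x,y) = \Phi(x+y) \leq \Phi(0) = r_1$ for all admissible $(x,y)$ with $x,y \geq 0$, whence $\phi(y(t)) \leq \lambda_1$ uniformly in $t$. Substituting this into the $y_1$-equation of the ODE system~\eqref{ODEs} yields the differential inequality $\dot{y}_1(t) \leq \lambda_1 y_1(t) + n^{-\alpha} y_0(t)$, which I would convert into a trajectory bound by a standard Gronwall/comparison argument against the linear ODE solved with equality.

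Using $K y_0(t) = n e^{\lambda_0 t}$, integrating the comparison ODE with initial datum $K y_1(0) = n^{\beta}$ reproduces exactly the upper bound already recorded in the proof of Lemma~\ref{lem:zeta_upper_bound}, namely
\[
K y_1(t) \leq n^{\beta} e^{\lambda_1 t} + \frac{n^{1-\alpha}}{\lambda_1 - \lambda_0}\left(e^{\lambda_1 t} - e^{\lambda_0 t}\right).
\]
Evaluating at $t = \zeta_n$, where by definition $K y_1(\zeta_n) = n$, and discarding the subtracted term $-e^{\lambda_0 \zeta_n} < 0$ to preserve the inequality, I obtain
\[
n \leq e^{\lambda_1 \zeta_n}\left(n^{\beta} + \frac{n^{1-\alpha}}{\lambda_1 - \lambda_0}\right).
\]

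The final step is to extract the dominant power of $n$ from the bracketed factor. Bounding $n^{\beta} + \frac{n^{1-\alpha}}{\lambda_1-\lambda_0} \leq C_0\, n^{\max\{\beta,\,1-\alpha\}}$ with $C_0 = 1 + \frac{1}{\lambda_1 - \lambda_0} > 1$ independent of $n$ gives
\[
e^{\lambda_1 \zeta_n} \geq \frac{1}{C_0}\, n^{\,1 - \max\{\beta,\,1-\alpha\}} = \frac{1}{C_0}\, n^{\min\{1-\beta,\,\alpha\}},
\]
and taking logarithms and dividing by $\lambda_1 > 0$ yields $\zeta_n \geq \frac{1}{\lambda_1}\log(1/C_0) + \frac{\min\{1-\beta,\alpha\}}{\lambda_1}\log n$, which is the claim with $c = \frac{1}{\lambda_1}\log C_0 > 0$. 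Because the argument relies only on the uniform ceiling $\phi \leq \lambda_1$ and elementary algebra, there is no genuine analytic obstacle; the sole point requiring care is the bookkeeping shared with the upper bound, namely verifying the identity $1 - \max\{\beta, 1-\alpha\} = \min\{1-\beta, \alpha\}$ so that the two parameter regimes ($\beta \gtrless 1-\alpha$) collapse into the single exponent, and checking that $C_0$ (hence $c$) is indeed independent of $n$.
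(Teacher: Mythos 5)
Your proposal is correct and follows essentially the same route as the paper: both use the ceiling $\phi \leq \lambda_1$ to compare $y_1$ against the linear ODE $\dot{\hat y}_1 = \lambda_1 \hat y_1 + n^{-\alpha}y_0$, solve it explicitly, and extract the exponent $\min\{1-\beta,\alpha\}$ with the same constant $\frac{1+\lambda_1-\lambda_0}{\lambda_1-\lambda_0}$. The only cosmetic difference is that the paper phrases the comparison via the hitting time $\hat\zeta_n$ of the dominating trajectory (using $\zeta_n \geq \hat\zeta_n$), whereas you evaluate the dominating bound directly at $t=\zeta_n$; the two are equivalent.
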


\begin{proof}
To derive a lower bound for \( \zeta_n \), we construct an auxiliary function \( \hat{y}_1(t) \) that bounds \( y_1(t) \) from above. Define \( \hat{y}_1(t) \) as the solution to the differential equation:
\begin{align}
\label{ODEs3}
\frac{d\hat{y}_1}{dt} = \lambda_1 \hat{y}_1 + n^{-\alpha} y_0(t),
\end{align}
with initial condition \( K\hat{y}_1(0) = n^{\beta} \). Since \( \lambda_1 \) corresponds to the intrinsic net growth rate of resistant cells in the absence of competition, this choice ensures that \( \hat{y}_1(t) \) dominates \( y_1(t) \), i.e., 
\[
y_1(t) \leq \hat{y}_1(t), \quad \text{for all } t \geq 0.
\]

Solving \eqref{ODEs3} yields:
\begin{align}
K \hat{y}_1(t) = \frac{n^{1 - \alpha}}{\lambda_0 - \lambda_1} e^{\lambda_0 t} 
+ \left( n^{\beta} + \frac{n^{1 - \alpha}}{\lambda_1 - \lambda_0} \right) e^{\lambda_1 t}.
\end{align}
Define \( \hat{\zeta}_n \) as the time at which \( K\hat{y}_1(\hat{\zeta}_n) = n \). Then:
\begin{align*}
e^{\lambda_1 \hat{\zeta}_n} 
&= \frac{n - \frac{n^{1 - \alpha}}{\lambda_0 - \lambda_1} e^{\lambda_0 \hat{\zeta}_n}}{n^{\beta} + \frac{n^{1 - \alpha}}{\lambda_1 - \lambda_0}} \\
&\geq \frac{n}{n^{\beta} + \frac{n^{1 - \alpha}}{\lambda_1 - \lambda_0}} \\
&\geq \frac{\lambda_1 - \lambda_0}{1 + \lambda_1 - \lambda_0} \cdot n^{\min\{1 - \beta, \alpha\}}.
\end{align*}
Taking logarithms gives:
\[
\lambda_1 \hat{\zeta}_n \geq \log\left( \frac{\lambda_1 - \lambda_0}{1 + \lambda_1 - \lambda_0} \right) + \min\{1 - \beta, \alpha\} \log n.
\]
Define 
\[
c := \frac{1}{\lambda_1} \log\left( \frac{1 + \lambda_1 - \lambda_0}{\lambda_1 - \lambda_0} \right).
\]
and we have
\[
\hat{\zeta}_n \geq -c + \frac{\min\left\{ 1 - \beta, \alpha \right\}}{\lambda_1} \log n.
\]
Since \( \zeta_n \geq \hat{\zeta}_n \), the desired lower bound follows.
\qed
\end{proof}

As an immediate consequence of Lemma~\ref{lem:zeta_upper_bound} and Lemma~\ref{lem:zeta_lower_bound}, we obtain the asymptotic characterization of the deterministic recurrence time stated in Proposition~\ref{prop:zeta}:
\[
\lim_{n \to \infty} \frac{\zeta_n}{\log n} = \frac{\min\left\{ 1 - \beta, \alpha \right\}}{\lambda_1}.
\]

% As a direct consequence of Lemma~\ref{lem:zeta_upper_bound} and Lemma~\ref{lem:zeta_lower_bound}, we conclude the result of Proposition~\ref{prop:zeta}, which establishes the precise asymptotic behavior of the deterministic recurrence time \( \zeta_n \):
% \[
% \lim_{n \to \infty} \frac{\zeta_n}{\log n} = \frac{\min\left\{ 1 - \beta, \alpha \right\}}{\lambda_1}.
% \]

\section{Proof of Proposition \ref{prop:ybeta}}
\label{sec:pf_prop_ybeta}
\begin{proof}
First, note that
\[
y_{\beta}(t) = y_{\beta}(0) \exp\left( \int_0^t \phi(Ky(s)) \, ds \right),
\]
which implies
\begin{align}
K y_{\beta}(\zeta_n) = n^{\beta} \exp\left( \int_0^{\zeta_n} \phi(Ky(s)) \, ds \right).
\end{align}
From the ODE~(\ref{ODEs}), we have
\[
\frac{d}{dt} \log y_1(t) = \phi(Ky(t)) + \frac{y_0(t)}{y_1(t)} \cdot n^{-\alpha}.
\]
Integrating from $0$ to \( t \) yields
\[
\log y_1(t) - \log y_1(0) = \int_0^t \left( \phi(Ky(s)) + \frac{y_0(s)}{y_1(s)} \cdot n^{-\alpha} \right) ds.
\]
Therefore,
\begin{align}
K y_1(\zeta_n) = n^{\beta} \exp\left( \int_0^{\zeta_n} \phi(Ky(s)) \, ds \right) \exp\left( \int_0^{\zeta_n} \frac{y_0(s)}{y_1(s)} \cdot n^{-\alpha} \, ds \right).
\end{align}
Because \( Ky_1(\zeta_n) = n \), it follows that
\begin{align}
\label{eq:y_beta_zeta}
K y_{\beta}(\zeta_n) = n \cdot \exp\left( -n^{-\alpha} \int_0^{\zeta_n} \frac{y_0(s)}{y_1(s)} \, ds \right).
\end{align}

To analyze equation~\eqref{eq:y_beta_zeta}, we apply the upper bound for \( 1/y_1(t) \) from~\eqref{eq:upper_bound_1_over_y1}, obtaining:
\begin{align*}
    \int_0^{\zeta_n} \frac{y_0(s)}{y_1(s)} \, ds 
    &= \int_0^1 \frac{y_0(s)}{y_1(s)} \, ds + \int_1^{\zeta_n} \frac{y_0(s)}{y_1(s)} \, ds \\
    &\leq C_1 n^{1 - \beta} + C_2 \int_1^{\zeta_n} \left( e^{\lambda_0 s} + n^{1 - \beta} e^{(\lambda_0 - \lambda_1)s} \right) ds \\
    &\leq C_3 n^{1 - \beta},
\end{align*}
where \( C_1, C_2, C_3 \) are positive constants and the second inequality holds for sufficiently large \( n \). Furthermore, by reusing the auxiliary function \( \hat{y}_1 \) defined in~\eqref{ODEs3} and selecting \( \epsilon > 0 \) such that \( \zeta_n > \epsilon \log n \) for sufficiently large \( n \), we obtain:
\begin{align*}
    \int_0^{\zeta_n} \frac{y_0(s)}{y_1(s)} \, ds 
    &\geq \int_0^{\epsilon \log n} \frac{y_0(s)}{y_1(s)} \, ds 
    \geq \int_0^{\epsilon \log n} \frac{y_0(s)}{\hat{y}_1(s)} \, ds \\
    &= \int_0^{\epsilon \log n} \frac{1}{
        \frac{n^{-\alpha}}{\lambda_0 - \lambda_1}
        + \left( n^{\beta - 1} + \frac{n^{-\alpha}}{\lambda_1 - \lambda_0} \right) e^{(\lambda_1 - \lambda_0)s}
    } \, ds.
\end{align*}
Define \( a = \frac{n^{-\alpha}}{\lambda_1 - \lambda_0} \), 
\( b = n^{\beta - 1} + \frac{n^{-\alpha}}{\lambda_1 - \lambda_0} \), 
and \( c = \lambda_1 - \lambda_0 \). Then:
\begin{align*}
    \int_0^{\zeta_n} \frac{y_0(s)}{y_1(s)} \, ds 
    \geq \int_0^{\epsilon \log n} \frac{1}{-a + b e^{c s}} \, ds.
\end{align*}
Substitute \( u = e^{c s} \), hence \( ds = \frac{du}{c u} \), yields:
\begin{align*}
    \int_0^{\epsilon \log n} \frac{1}{-a + b e^{cs}} \, ds 
    &= \int_1^{n^{c\epsilon}} \frac{1}{-a + b u} \cdot \frac{du}{c u} \\
    &= \frac{1}{ac} \left( \int_1^{n^{c\epsilon}} \frac{1}{u - \frac{a}{b}} \, du 
    - \int_1^{n^{c\epsilon}} \frac{1}{u} \, du \right) \\
    &= \frac{1}{ac} \left( 
    \log \left( \frac{n^{c\epsilon} - \frac{a}{b}}{1 - \frac{a}{b}} \right) 
    - \log n^{c\epsilon}
    \right),
\end{align*}
where
\begin{align*}
    \frac{n^{c\epsilon} - \frac{a}{b}}{1 - \frac{a}{b}} 
    &= \left( 1 + \frac{n^{1 - \alpha - \beta}}{\lambda_1 - \lambda_0} \right) 
    \left( n^{c\epsilon} - \frac{n^{1 - \alpha}}{(\lambda_1 - \lambda_0) n^{\beta} + n^{1 - \alpha}} \right) \\
    &= n^{c\epsilon} 
    + \frac{n^{1 - \alpha - \beta + c\epsilon}}{\lambda_1 - \lambda_0} 
    - \frac{n^{1 - \alpha - \beta}}{\lambda_1 - \lambda_0}.
\end{align*}
Thus,
\begin{align*}
    \int_0^{\epsilon \log n} \frac{1}{-a + b e^{cs}} \, ds 
    &= \frac{1}{ac} \left( 
    \log \left( \frac{n^{c\epsilon} - \frac{a}{b}}{1 - \frac{a}{b}} \right) 
    - \log n^{c\epsilon}
    \right) \\
    &= n^{\alpha} \log \left( 
    1 + \left( \frac{n^{1 - \alpha - \beta}}{\lambda_1 - \lambda_0} 
    - \frac{n^{1 - \alpha - \beta}}{\lambda_1 - \lambda_0} n^{-c\epsilon} \right) 
    \right) \\
    &\geq \frac{n^{1 - \beta}}{2(\lambda_1 - \lambda_0)},
\end{align*}
where the inequality follows from Taylor Expansion and holds for sufficiently large \( n \).

Therefore, combining the above analysis with equation (\ref{eq:y_beta_zeta}), we obtain the following bounds:
\begin{align}
\label{eq:bound_log_ybeta}
    \frac{n^{1 - \alpha - \beta}}{2(\lambda_1 - \lambda_0)} 
    \leq \log \left( \frac{n}{K y_{\beta}(\zeta_n)} \right) 
    \leq C_3 n^{1 - \alpha - \beta}.
\end{align}
It can be verified that we may take
\[
C_3 = \frac{2}{\bar{\lambda}_1 - \lambda_0},
\]
where
\[
\bar{\lambda}_1 := \min_{0 \leq t \leq \zeta_n} \phi(y(t)) > 0.
\]
Taking logarithms once more yields the limit:
\begin{align*}
    \lim_{n \to \infty} \frac{\log \log \left( \frac{n}{K y_{\beta}(\zeta_n)} \right)}{\log n} 
    = 1 - \alpha - \beta.
\end{align*}
\qed
\end{proof}

\section{Proof of Theorem \ref{thm:ratio_conv}}
\label{sec:pf_thm_ratio_conv}
\begin{proof}
The proof of Theorem~\ref{thm:ratio_conv} is decomposed into three components, corresponding to the convergence of the following ratios: the sensitive cell population \( X_0(t)/y_0(t) \), the total resistant cell population \( X_1(t)/y_1(t) \), and the pre-existing resistant clone \( X_{\beta}(t)/y_{\beta}(t) \). Although each subpopulation evolves under different dynamical constraints, the proofs follow the same framework. Each case is treated in detail in the subsequent subsections.

% The proof of Theorem~\ref{thm:ratio_conv} will be divided into three parts, corresponding to the ratios for the sensitive cells \( X_0(t)/y_0(t) \), the total resistant population \( X_1(t)/y_1(t) \), and the pre-existing resistant clone \( X_{\beta}(t)/y_{\beta}(t) \). Although these components evolve under different growth mechanisms, they share a common proof strategy. We will address each case separately in the following subsections.

\subsection*{Sensitive cells}
We adopt the scaled process representation from~\cite{bansaye2024sharp} and express the ratio \( X_0(t)/y_0(t) \) as a semimartingale:
% We adopt the representation of the scaled process as formulated in~\cite{bansaye2024sharp}, and express the ratio \( X_0(t)/y_0(t) \) as a semimartingale:
\begin{align}
\frac{X_0(t)}{y_0(t)} &= 1 
- \int_0^t \frac{X_0(s)}{y_0(s)} \lambda_0 \, ds 
+ \int_0^t \int_0^{\infty} \frac{1}{K y_0(s-)} \mathbbm{1}_{\left\{ u \leq K X_0(s-) r_0 \right\}} \mathcal{N}_0^b(ds, du) \nonumber \\
&\quad - \int_0^t \int_0^{\infty} \frac{1}{K y_0(s-)} \mathbbm{1}_{\left\{ u \leq K X_0(s-) d_0 \right\}} \mathcal{N}_0^d(ds, du) \nonumber \\
&= 1 + E_0(t),
\end{align}
% where the fluctuation term \( E_0(t) \) is defined as:
where \( E_0(t) \) is given by:
\begin{align}
E_0(t) &= \int_0^t \int_0^{\infty} \frac{1}{K y_0(s-)} \mathbbm{1}_{\left\{ u \leq K X_0(s-) r_0 \right\}} \tilde{\mathcal{N}}_0^b(ds, du) \nonumber \\
&\quad - \int_0^t \int_0^{\infty} \frac{1}{K y_0(s-)} \mathbbm{1}_{\left\{ u \leq K X_0(s-) d_0 \right\}} \tilde{\mathcal{N}}_0^d(ds, du).
\end{align}
Here, \( \tilde{\mathcal{N}}_0^{\cdot}(ds, du) = \mathcal{N}_0^{\cdot}(ds, du) - ds\,du \) denotes the compensated Poisson martingale measures. 

From Theorem A.3 of \cite{bansaye2015stochastic}, we know that \( E_0(t) \) is a square-integrable martingale with quadratic variation:
\begin{align}
\langle E_0 \rangle_t = \int_0^t \frac{X_0(s)}{K y_0(s)^2}(r_0 + d_0) \, ds.
\end{align}

\noindent
By the Burkholder–Davis–Gundy inequality and Jensen's inequality, we obtain:
\begin{align*}
\mathbb{E} \left[ \sup_{t \leq \zeta_n + \delta} \left| \frac{X_0(t)}{y_0(t)} - 1 \right| \right]
&= \mathbb{E} \left[ \sup_{t \leq \zeta_n + \delta} \left| E_0(t) \right| \right] \\
&\leq C_1 \, \mathbb{E} \left[ \langle E_0 \rangle_{\zeta_n + \delta}^{1/2} \right] \\
&= C_1 \, \mathbb{E} \left[ \left( \int_0^{\zeta_n + \delta} \frac{X_0(s)}{K y_0(s)^2}(r_0 + d_0) \, ds \right)^{1/2} \right] \\
&\leq C_1 \left( \int_0^{\zeta_n + \delta} \frac{\mathbb{E}[X_0(s)]}{K y_0(s)^2}(r_0 + d_0) \, ds \right)^{1/2} \\
&= C_2 K^{-1/2} \left( \int_0^{\zeta_n + \delta} \frac{1}{y_0(s)} \, ds \right)^{1/2} \\
&\leq C_3 n^{-1/2} e^{-\lambda_0 \zeta_n / 2},
\end{align*}
where $C_1$, $C_2$, and $C_3$ are positive constants. The final inequality uses the fact that $\mathbb{E}[X_0(s)] = y_0(s) = n e^{\lambda_0 s}/K$.
% and we used the fact that \( \mathbb{E}[X_0(s)] = y_0(s) =ne^{\lambda_0s}/K\), since \( Z_0(t) \) is a subcritical branching process with exponential decay.
We then apply Lemma~\ref{lem:zeta_upper_bound} together with the assumptions that \(1 - \beta < -\frac{\lambda_1}{\lambda_0}, 1 - \beta < \alpha \) to conclude that 
\[
n^{-1/2} e^{-\lambda_0 \zeta_n / 2} \to 0 \quad \text{as } n \to \infty.
\]
% Finally, by Lemma~\ref{lem:zeta_upper_bound} and the assumption that \(1 - \beta < -\frac{\lambda_1}{\lambda_0}, 1 - \beta < \alpha \), it follows that
% \[
% n^{-1/2} e^{-\lambda_0 \zeta_n / 2} \to 0 \quad \text{as } n \to \infty.
% \]
% \begin{remark}
%     \textbf{An upper bound of $\zeta_n$: } We will need to have an approximation of $\zeta_n$ to know if $n^{-1/2}e^{-\lambda_0\zeta_n/2} \rightarrow 0$. Recall that previously we had $\zeta_n \sim \frac{\alpha}{\lambda_1}\log n$, by setting $\alpha <- \lambda_1/\lambda_0$, $n^{-1/2}e^{-\lambda_0\zeta_n/2}$ should converge to 0.
% \end{remark}
Therefore, for any \( \epsilon > 0 \), we have:
\[
\lim_{n \to \infty} \mathbb{P} \left( \sup_{t \leq \zeta_n+\delta} \left| \frac{X_0(t)}{y_0(t)} - 1 \right| > \epsilon \right) 
\leq \lim_{n \to \infty} \frac{\mathbb{E} \left[ \sup_{t \leq \zeta_n+\delta} \left| \frac{X_0(t)}{y_0(t)} - 1 \right|  \right]}{\epsilon} = 0.
\]

% \subsection*{Total resistant population}
\subsection*{Resistant cells}
Similarly the ratio \( X_1(t)/y_1(t) \) can be expressed as a semimartingale:
% Similarly, the scaled process for the total resistant population \( X_1(t)/y_1(t) \) can also be expressed as a semimartingale:
\begin{align}
\frac{X_1(t)}{y_1(t)} &= 1 
- \int_0^t \left( \frac{X_1(s)}{y_1(s)} \phi(Ky(s)) + \frac{y_0(s) X_1(s)}{y_1(s)^2} n^{-\alpha} \right) ds \nonumber \\
&\quad + \int_0^t \int_0^{\infty} \frac{1}{K y_1(s-)} \mathbbm{1}_{\{ u \leq K X_1(s-) f(X(s-)) \}} \mathcal{N}_1^b(ds, du) \nonumber \\
&\quad - \int_0^t \int_0^{\infty} \frac{1}{K y_1(s-)} \mathbbm{1}_{\{ u \leq K X_1(s-) d_1 \}} \mathcal{N}_1^d(ds, du) \nonumber \\
&\quad + \int_0^t \int_0^{\infty} \frac{1}{K y_1(s-)} \mathbbm{1}_{\{ u \leq K X_0(s-) n^{-\alpha} \}} \mathcal{N}_0^m(ds, du) \nonumber \\
&= 1 + E_1(t) 
+ \int_0^t \frac{X_1(s)}{y_1(s)} \left( \phi(KX(s)) - \phi(Ky(s)) \right) ds\nonumber\\ 
&\quad + \int_0^t \left( \frac{X_0(s)}{y_1(s)} - \frac{y_0(s) X_1(s)}{y_1(s)^2} \right) n^{-\alpha} ds,
\end{align}
where 
\( E_1(t) \) is given by:
\begin{align}
E_1(t) &= \int_0^t \int_0^{\infty} \frac{1}{K y_1(s-)} \mathbbm{1}_{\{ u \leq K X_1(s-) f(X(s-)) \}} \tilde{\mathcal{N}}_1^b(ds, du) \nonumber \\
&\quad - \int_0^t \int_0^{\infty} \frac{1}{K y_1(s-)} \mathbbm{1}_{\{ u \leq K X_1(s-) d_1 \}} \tilde{\mathcal{N}}_1^d(ds, du) \nonumber \\
&\quad + \int_0^t \int_0^{\infty} \frac{1}{K y_1(s-)} \mathbbm{1}_{\{ u \leq K X_0(s-) n^{-\alpha} \}} \tilde{\mathcal{N}}_0^m(ds, du).
\end{align}
It's easy to verify that \( E_1(t) \) is a square-integrable martingale with quadratic variation:
\begin{align}
\langle E_1 \rangle_t 
= \int_0^t \frac{1}{K y_1(s)} \left( \frac{X_1(s)}{y_1(s)} (f(X(s)) + d_1) + \frac{X_0(s)}{y_1(s)} n^{-\alpha} \right) ds.
\end{align}
We then obtain the bound:
\begin{align*}
\sup_{t \leq \zeta_n + \delta} \left| \frac{X_1(t)}{y_1(t)} - 1 \right| 
&\leq \sup_{t \leq \zeta_n + \delta} |E_1(t)| 
+ \int_0^{\zeta_n + \delta} \frac{X_1(s)}{y_1(s)} \left| \phi(KX(s)) - \phi(Ky(s)) \right| ds \\
&\quad + n^{-\alpha} \int_0^{\zeta_n + \delta} \left| \frac{X_0(s)}{y_1(s)} - \frac{y_0(s) X_1(s)}{y_1(s)^2} \right| ds\\
&\leq \sup_{t \leq \zeta_n + \delta} |E_1(t)| 
+ \int_0^{\zeta_n + \delta} C \frac{X_1(s)}{y_1(s)} \left( \left| X_0(s) - y_0(s) \right| + \left| X_1(s) - y_1(s) \right| \right) ds \\
&\quad + n^{-\alpha} \int_0^{\zeta_n + \delta} \left| \frac{X_0(s)}{y_1(s)} - \frac{y_0(s) X_1(s)}{y_1(s)^2} \right| ds\\
&\leq \sup_{t \leq \zeta_n + \delta} |E_1(t)| 
+ \int_0^{\zeta_n + \delta} C \frac{X_1(s)}{y_1(s)} \left( \left| X_0(s) - y_0(s) \right| + \left| X_1(s) - y_1(s) \right| \right) ds \\
&\quad + n^{-\alpha} \int_0^{\zeta_n + \delta} \frac{y_0(s)}{y_1(s)} \left| \frac{X_0(s)}{y_0(s)} - 1 \right| ds 
+ n^{-\alpha} \int_0^{\zeta_n + \delta} \frac{y_0(s)}{y_1(s)} \left| \frac{X_1(s)}{y_1(s)} - 1 \right| ds\\
&\leq \sup_{t \leq \zeta_n + \delta} |E_1(t)| 
 \\
&\quad + \int_0^{\zeta_n + \delta} C \frac{X_1(s)}{y_1(s)} \left| X_0(s) - y_0(s) \right| ds + n^{-\alpha} \int_0^{\zeta_n + \delta} \frac{y_0(s)}{y_1(s)} \left| \frac{X_0(s)}{y_0(s)} - 1 \right| ds \\
&\quad + \int_0^{\zeta_n + \delta} \left( C X_1(s) + n^{-\alpha} \frac{y_0(s)}{y_1(s)} \right) 
\sup_{r \leq s} \left| \frac{X_1(r)}{y_1(r)} - 1 \right| ds,
\end{align*}
where (i) $C$ is some positive constant; (ii) the second inequality comes from the Lipschitz continuity property of \( \phi(Kx) \) (\textit{(A1)} of Assumption~\ref{assump:f}); (iii) the third inequality is obtained by applying the triangle inequality.
% where the second inequality comes from Assumption~\ref{assump:f}\textit{(A1)}, the Lipschitz continuity property of \( \phi(Kx) \) and the third inequality comes from triangle inequality.

By Gronwall’s inequality, we obtain the bound
\begin{align}
    \sup_{t \leq \zeta_n + \delta} \left| \frac{X_1(t)}{y_1(t)} - 1 \right| 
    &\leq \exp\left( \int_0^{\zeta_n + \delta} \left( C X_1(s) + n^{-\alpha} \frac{y_0(s)}{y_1(s)} \right) ds \right) \times \left( \sup_{t \leq \zeta_n + \delta} |E_1(t)|
     \right.\\
    &\quad \left. + \int_0^{\zeta_n + \delta} C \frac{X_1(s)}{y_1(s)} \left| X_0(s) - y_0(s) \right| ds + n^{-\alpha} \int_0^{\zeta_n + \delta} \frac{y_0(s)}{y_1(s)} \left| \frac{X_0(s)}{y_0(s)} - 1 \right| ds \right).\nonumber 
\end{align}
To establish our result, we proceed to analyze the three integral terms and the martingale supremum on the right-hand side.
% To proceed, we analyze the four terms on the right-hand side individually:

\textbf{(1)}\quad $\boldsymbol{\exp\left( \int_0^{\zeta_n + \delta} \left( C X_1(s) + n^{-\alpha} \frac{y_0(s)}{y_1(s)} \right) ds \right)}$

Let \(\hat{Z}_1(t) \) represent a branching process with intrinsic growth rate \( \lambda_1 = r_1 - d_1 \) and immigration from the sensitive population at rate \( n^{-\alpha} Z_0(s) \). Because \( r_1 = f(0, 0) \geq f(Z/K) \) for all \( Z \in \mathbb{R}^+ \times \mathbb{R}^+ \), it follows that 
\begin{align}
\label{eq:upper_bound_E_X1}
\mathbb{E}[X_1(t)] \leq \mathbb{E}[\hat{Z}_1(t)]/K = \frac{n^{1 - \alpha}}{K(\lambda_1 - \lambda_0)} \left( e^{\lambda_1 t} - e^{\lambda_0 t} \right) + \frac{n^{\beta}}{K} e^{\lambda_1 t},
\end{align}
which is a standard result for branching processes with immigration \cite{durrett2015branching}. Applying the upper bound on $\zeta_n$ from Lemma~\ref{lem:zeta_upper_bound}, specifically $\zeta_n < C+\frac{\min\left\{  1-\beta,\alpha\right\}}{\lambda_1}\log n$ for some constant $C>0$ (with a slight abuse of notation, we allow this constant to be redefined and it may differ from the constant $C$ used previously), we obtain
\begin{align*}
    \int_0^{\zeta_n + \delta} \mathbb{E}[X_1(s)] \, ds 
    &\leq \int_0^{ C + \frac{1 - \beta}{\lambda_1} \log n } 
    \left( \frac{n^{1 - \alpha}}{K(\lambda_1 - \lambda_0)} + \frac{n^{\beta}}{K} \right) e^{\lambda_1 s} \, ds \\
    &= O(1).
\end{align*}
Hence, the integral is uniformly bounded in \( n \). Furthermore, applying inequality (\ref{eq:upper_bound_1_over_y1}) and the assumption $1 - \beta < \alpha$, we obtain:
\begin{align*}
    n^{ - \alpha}\int_0^{\zeta_n + \delta} \frac{y_0(s)}{y_1(s)} \, ds 
    &\leq C_1 n^{1 - \alpha - \beta} \int_0^1 e^{\lambda_0 s} \, ds 
    + C_2 n^{-\alpha} \int_1^{\zeta_n + \delta} e^{\lambda_0 s} \, ds \\
    &\quad + C_3 n^{1 - \alpha - \beta} \int_1^{\zeta_n + \delta} e^{(\lambda_0 - \lambda_1)s} \, ds \\
    &= O(n^{1 - \alpha - \beta}).
\end{align*}
We therefore conclude that the following expectation is uniformly bounded in \( n \):
\begin{align}
\label{eq:integral_X1}
    \limsup_{n \to \infty} 
    \mathbb{E} \left[ 
    \int_0^{\zeta_n + \delta} \left( 
    C X_1(s) + n^{-\alpha} \frac{y_0(s)}{y_1(s)} 
    \right) ds 
    \right] < \infty.
\end{align}
\textbf{(2)}\quad $\boldsymbol{\sup\limits_{t \leq \zeta_n + \delta} |E_1(t)|}$

% We now estimate the martingale term. By the Burkholder–Davis–Gundy inequality and Jensen’s inequality, we have:
We now proceed to estimate the martingale term. Applying the Burkholder–Davis–Gundy inequality followed by Jensen's inequality yields:
\begin{align*}
    \mathbb{E}\left[ \sup_{t \leq \zeta_n + \delta} \left| E_1(t) \right| \right] 
    &\leq C \, \mathbb{E}\left[ \langle E_1 \rangle_{\zeta_n + \delta}^{1/2} \right] 
    \leq C \left( \mathbb{E}\left[ \langle E_1 \rangle_{\zeta_n + \delta} \right] \right)^{1/2} \\
    &= C \left( \mathbb{E} \left[ \int_0^{\zeta_n + \delta} \frac{1}{K y_1(s)} \left( 
    \frac{X_1(s)}{y_1(s)} (f(X(s)) + d_1) 
    + \frac{X_0(s)}{y_1(s)} n^{-\alpha} 
    \right) ds \right] \right)^{1/2} \\
    &\leq C \left( \mathbb{E} \left[ \int_0^{\zeta_n + \delta} \frac{1}{K y_1(s)} \left( 
    \frac{X_1(s)}{y_1(s)} (r_1 + d_1) 
    + \frac{X_0(s)}{y_1(s)} n^{-\alpha} 
    \right) ds \right] \right)^{1/2},
\end{align*}
where the final inequality follows from the inequality \( f(X(s)) \leq r_1 \).
% From (\ref{eq:upper_bound_1_over_y1}), we further bound the expectations inside the square root. First,
To bound the expectation in the previous expression, we apply the upper bound from (\ref{eq:upper_bound_1_over_y1}). We first estimate the integral involving the expectation of $X_1(s)$:
\begin{align*}
\frac{1}{K} \int_0^{\zeta_n + \delta} \frac{\mathbb{E}[X_1(s)]}{y_1(s)^2} \, ds 
&\leq C_1 n^{-\beta} \int_0^1 e^{\lambda_1 s} ds 
+ C_2 n^{\beta - 2} \int_1^{\zeta_n + \delta} e^{\lambda_1 s} ds 
+ C_3 n^{-\beta} \int_1^{\zeta_n + \delta} e^{-\lambda_1 s} ds \\
&= O(n^{-\beta}).
% \to 0 \quad \text{as } n \to \infty.
\end{align*}
Similarly, for the second term, with the fact $\EE[X_0]=y_0$, we obtain:
\begin{align*}
\frac{n^{-\alpha}}{K} \int_0^{\zeta_n + \delta} \frac{y_0(s)}{y_1(s)^2} \, ds 
&\leq C_1 n^{1 - \alpha - 2\beta} \int_0^1 e^{\lambda_0 s} ds 
+ C_2 n^{-1 - \alpha} \int_1^{\zeta_n + \delta} e^{\lambda_0 s} ds \\
&\quad + C_3 n^{1 - \alpha - 2\beta} \int_1^{\zeta_n + \delta} e^{(-2\lambda_1 + \lambda_0)s} ds \\
&= O(n^{1 - \alpha - 2\beta}) + O(n^{-1 - \alpha}).
\end{align*}
Therefore, we conclude that
\[
\mathbb{E}\left[ \sup_{t \leq \zeta_n + \delta} \left| E_1(t) \right| \right] = O(n^{-\beta/2}).
\]

\textbf{(3)}\quad $\boldsymbol{\int_0^{\zeta_n + \delta} \frac{X_1(s)}{y_1(s)} |X_0(s) - y_0(s)| \, ds}$

Applying Hölder's inequality and Cauchy–Schwarz inequality to the expectation yields:
\begin{align*}
    & \quad \mathbb{E} \left[ \int_0^{\zeta_n + \delta} \frac{X_1(s)}{y_1(s)} \left| X_0(s) - y_0(s) \right| ds \right] \\
    &\leq \mathbb{E} \left[ \left( \int_0^{\zeta_n + \delta} \left( \frac{X_1(s)}{y_1(s)} \right)^2 ds \right)^{1/2} 
    \left( \int_0^{\zeta_n + \delta} \left( X_0(s) - y_0(s) \right)^2 ds \right)^{1/2} \right] \\
    &\leq \left( \mathbb{E} \left[ \int_0^{\zeta_n + \delta} \left( \frac{X_1(s)}{y_1(s)} \right)^2 ds \right] \right)^{1/2} 
    \left( \mathbb{E} \left[ \int_0^{\zeta_n + \delta} \left( X_0(s) - y_0(s) \right)^2 ds \right] \right)^{1/2} \\
    &= \left( \int_0^{\zeta_n + \delta} \frac{\mathbb{E}[X_1(s)^2]}{y_1(s)^2} ds \right)^{1/2}
    \left( \int_0^{\zeta_n + \delta} \mathbb{E}\left[ \left( X_0(s) - y_0(s) \right)^2 \right] ds \right)^{1/2} \\
    &\leq C \left( \int_0^{\zeta_n + \delta} \frac{\mathbb{E}[X_1(s)^2]}{y_1(s)^2} ds \right)^{1/2} \cdot \frac{1}{\sqrt{n}},
\end{align*}
where the last inequality follows from a second moment calculation for subcritical branching processes (see \cite{durrett2015branching}). In particular, the second moment of $X_0(t)$ is given by
% In particular, we have ((see \cite{bp}))
\begin{align}
&\mathbb{E}[X_0(t)^2] = \frac{n}{K^2} \cdot \frac{r_0 + d_0}{\lambda_0} e^{\lambda_0 t} \left( e^{\lambda_0 t} - 1 \right) 
+ \frac{n^2}{K^2} e^{2 \lambda_0 t}.
\end{align}
Consequently, the variance satisfies the bound
\begin{align}
    \mathbb{E}\left[ \left( X_0(s) - y_0(s) \right)^2 \right]\leq \frac{n}{K^2} \cdot \frac{r_0 + d_0}{|\lambda_0|} e^{\lambda_0 t}.
\end{align}

% Following a similar argument as (\ref{eq:upper_bound_E_X1}), we know
Following an argument analogous to that used in (\ref{eq:upper_bound_E_X1}), we establish the bound
\begin{align}
\label{eq:upper_bound_E_X1_square}
\mathbb{E}[X_1(s)^2] \leq \frac{1}{K^2} \, \mathbb{E}[\hat{Z}_1(s)^2] 
\leq C \left( n^{2 - 2\alpha} e^{2 \lambda_1 s} + n^{2\beta} e^{2 \lambda_1 s} \right)/K^2.
\end{align}
% Combined with (\ref{eq:upper_bound_1_over_y1}),
Substituting this bound, together with the bound from (\ref{eq:upper_bound_1_over_y1}), yields
\begin{align*}
\int_0^{\zeta_n + \delta} \frac{\mathbb{E}[X_1(s)^2]}{y_1(s)^2} ds 
&\leq C_1 \int_0^1 e^{2\lambda_1 s} ds 
+ C_2 n^{2\beta - 2} \int_1^{\zeta_n + \delta} e^{2\lambda_1 s} ds 
+ C_3 \int_1^{\zeta_n + \delta} ds \\
&= O(\log n).
\end{align*}
Consequently, we obtain the final estimate:
% which gives the final estimate:
\[
\mathbb{E} \left[ \int_0^{\zeta_n + \delta} \frac{X_1(s)}{y_1(s)} \left| X_0(s) - y_0(s) \right| ds \right] = O(n^{-1/2} \sqrt{\log n}).
\]

\textbf{(4)}\quad $\boldsymbol{ n^{-\alpha} \int_0^{\zeta_n + \delta} \frac{y_0(s)}{y_1(s)} \left| \frac{X_0(s)}{y_0(s)} - 1 \right| ds}$

% We now bound the mutation correction term. Using Hölder's inequality and  (\ref{eq:upper_bound_1_over_y1}), we have:
Applying Hölder's inequality and the bound from (\ref{eq:upper_bound_1_over_y1}) gives:
\begin{align*}
    & \quad \mathbb{E} \left[ n^{-\alpha} \int_0^{\zeta_n + \delta} \frac{y_0(s)}{y_1(s)} \left| \frac{X_0(s)}{y_0(s)} - 1 \right| ds \right] \\
    &= n^{-\alpha} \int_0^{\zeta_n + \delta} \frac{1}{y_1(s)} \mathbb{E} \left[ \left| X_0(s) - y_0(s) \right| \right] ds \\
    &\leq n^{-\alpha} \left( \int_0^{\zeta_n + \delta} \frac{1}{y_1(s)^2} ds \right)^{1/2} 
    \left( \int_0^{\zeta_n + \delta} \mathbb{E} \left[ \left( X_0(s) - y_0(s) \right)^2 \right] ds \right)^{1/2} \\
    &\leq C n^{-\alpha - 1/2} \left( \int_0^{\zeta_n + \delta} \frac{1}{y_1(s)^2} ds \right)^{1/2}.
    % &= O(n^{1/2 - \alpha - \beta}).
\end{align*}
Applying (\ref{eq:upper_bound_1_over_y1}) again to the remaining integral:
\begin{align*}
    \int_0^{\zeta_n + \delta} \frac{1}{y_1(s)^2} ds 
    &\leq C_1 n^{2 - 2\beta} \int_0^1 ds 
    + C_2 \int_1^{\zeta_n + \delta} ds 
    + C_3 n^{2 - 2\beta} \int_1^{\zeta_n + \delta} e^{-2\lambda_1 s} ds \\
    &= O(n^{2 - 2\beta}).
\end{align*}
Combining these results, we obtain:
\[
\mathbb{E} \left[ n^{-\alpha} \int_0^{\zeta_n + \delta} \frac{y_0(s)}{y_1(s)} \left| \frac{X_0(s)}{y_0(s)} - 1 \right| ds \right] = O(n^{1/2 - \alpha - \beta}).
\]

% Now we can combine the four terms and analyze their joint behavior. For simplification, we define the following quantities:
We now combine the four terms analyzed above to derive their joint asymptotic behavior. To simplify the presentation, we define the following two quantities:
\begin{align*}
    A(n) &:= n^u\left(\sup_{t \leq \zeta_n + \delta} |E_1(t)|
    + \int_0^{\zeta_n + \delta} C \frac{X_1(s)}{y_1(s)} \left| X_0(s) - y_0(s) \right| ds  
    + n^{-\alpha} \int_0^{\zeta_n + \delta} \frac{y_0(s)}{y_1(s)} \left| \frac{X_0(s)}{y_0(s)} - 1 \right| ds\right), \\
    B(n) &:= \int_0^{\zeta_n + \delta} \left( 
    C X_1(s) + n^{-\alpha} \frac{y_0(s)}{y_1(s)} 
    \right) ds.
\end{align*}
From the previous analysis, specifically, the bounds of order $O(n^{-\beta/2})$, $O(n^{-1/2} \sqrt{\log n})$, and $O(n^{1/2 - \alpha - \beta})$, we obtain that for any exponent $u<\beta/2$, the following holds:
% From previous estimates, we have: \textcolor{red}{(add detail for $u$)}
\[
\lim_{n \to \infty} \mathbb{E}[A(n)] = 0 \quad \text{and} \quad \limsup_{n \to \infty} \mathbb{E}[B(n)] < \infty.
% \quad\text{when}\quad u<\beta/2.
\]
Because \( A(n) \geq 0 \), Markov’s inequality implies that \( A(n) \xrightarrow{P} 0 \). Now, for any \( \epsilon > 0 \) and \( \delta > 0 \), we bound the probability as follows:
\[
\mathbb{P}\left( A(n) \cdot \exp(B(n)) > \epsilon \right) 
\leq \mathbb{P}(A(n) > \delta) + \mathbb{P}\left( \exp(B(n)) > \frac{\epsilon}{\delta} \right).
\]
The first term converges to zero as \( n \to \infty \) by the convergence in probability of \( A(n)\). For the second term, applying Markov’s inequality gives:
\[
\mathbb{P}\left( \exp(B(n)) > \frac{\epsilon}{\delta} \right) 
= \mathbb{P}\left( B(n) > \log\left( \frac{\epsilon}{\delta} \right) \right) 
\leq \frac{\mathbb{E}[B(n)]}{\log\left( \frac{\epsilon}{\delta} \right)}.
\]
Because \( \sup_n \mathbb{E}[B(n)] \leq C < \infty \), we can make this bound arbitrarily small by choosing \( \delta > 0 \) sufficiently small (thus making the logarithmic term arbitrarily large). Therefore,
\[
\mathbb{P}\left( A(n) \cdot \exp(B(n)) > \epsilon \right) \to 0,
\]
which implies \( A(n) \cdot \exp(B(n)) \xrightarrow{P} 0 \).

Recalling the Gronwall-type inequality
\[
n^u\sup_{t \leq \zeta_n + \delta} \left| \frac{X_1(t)}{y_1(t)} - 1 \right| \leq A(n) \cdot \exp(B(n)),
\]
we thus conclude that
\[
\lim_{n \to \infty} \mathbb{P} \left( n^u\sup_{t \leq \zeta_n+\delta} \left| \frac{X_1(t)}{y_1(t)} - 1 \right| > \epsilon \right) = 0,
\]
which establishes the desired convergence in probability.

\subsection*{Pre-existing resistant clone}
The ratio $X_{\beta}(t)/y_{\beta}(t)$ can be expressed as a semimartingale:
\begin{align}
    \frac{X_{\beta}(t)}{y_{\beta}(t)} 
    &= 1 
    - \int_0^t \frac{X_{\beta}(s)}{y_{\beta}(s)} \phi(K y(s)) \, ds \nonumber \\
    &\quad + \int_0^t \int_0^{\infty} \frac{1}{K y_{\beta}(s-)} \mathbbm{1}_{\left\{ u \leq K X_{\beta}(s-) f( X(s-)) \right\}} \mathcal{N}_1^b(ds, du) \nonumber \\
    &\quad - \int_0^t \int_0^{\infty} \frac{1}{K y_{\beta}(s-)} \mathbbm{1}_{\left\{ u \leq K X_{\beta}(s-) d_1 \right\}} \mathcal{N}_1^d(ds, du\nonumber\\
    &= 1 + E_{\beta}(t) 
    + \int_0^t \frac{X_{\beta}(s)}{y_{\beta}(s)}\left( \phi(KX(s)) - \phi(Ky(s)) \right) ds,
\end{align}
where \( E_{\beta}(t) \) is given by:
\begin{align}
    E_{\beta}(t) 
    &= \int_0^t \int_0^{\infty} \frac{1}{K y_{\beta}(s-)} 
    \mathbbm{1}_{\left\{ u \leq K X_{\beta}(s-) f(X(s-)) \right\}} 
    \tilde{\mathcal{N}}_1^b(ds, du) \nonumber\\
    &\quad - \int_0^t \int_0^{\infty} \frac{1}{K y_{\beta}(s-)} 
    \mathbbm{1}_{\left\{ u \leq K X_{\beta}(s-) d_1 \right\}} 
    \tilde{\mathcal{N}}_1^d(ds, du).
\end{align}
Then we have
\begin{align*}
    &\sup_{t \leq \zeta_n + \delta} \left| \frac{X_{\beta}(t)}{y_{\beta}(t)} - 1 \right|\\ 
    \leq& \sup_{t \leq \zeta_n + \delta} \left| E_{\beta}(t) \right| 
    + \int_0^{\zeta_n + \delta} \frac{X_{\beta}(s)}{y_{\beta}(s)} \left| \phi(KX(s)) - \phi(Ky(s)) \right| ds \\
    \leq& \sup_{t \leq \zeta_n + \delta} \left| E_{\beta}(t) \right| 
    + \int_0^{\zeta_n + \delta} C \frac{X_{\beta}(s)}{y_{\beta}(s)} \left( |X_0(s) - y_0(s)| + |X_1(s) - y_1(s)| \right) ds \\
    \leq &\sup_{t \leq \zeta_n + \delta} \left| E_{\beta}(t) \right| 
    + \int_0^{\zeta_n + \delta} C \frac{X_{\beta}(s)}{y_{\beta}(s)} \left( |X_0(s) - y_0(s)| + |X_{\beta}(s) - y_{\beta}(s)| + |X_m(s) - y_m(s)| \right) ds \\
    \leq &\sup_{t \leq \zeta_n + \delta} \left| E_{\beta}(t) \right| 
    + \int_0^{\zeta_n + \delta} C \frac{X_{\beta}(s)}{y_{\beta}(s)} |X_0(s) - y_0(s)| ds 
    + \int_0^{\zeta_n + \delta} C \frac{X_{\beta}(s)}{y_{\beta}(s)} |X_m(s) - y_m(s)| ds \\
    &\quad + \int_0^{\zeta_n + \delta} C X_{\beta}(s) \sup_{r \leq s} \left| \frac{X_{\beta}(r)}{y_{\beta}(r)} - 1 \right| ds,
\end{align*}
where $y_m(t):=y_1(t)-y_{\beta}(t)$. Applying Gronwall’s inequality to the system yields the following bound:
\begin{align*}
    \sup_{t \leq \zeta_n+\delta} \left| \frac{X_{\beta}(t)}{y_{\beta}(t)} - 1 \right| 
    &\leq C\left( 
        \sup_{t \leq \zeta_n+\delta} \left| E_{\beta}(t) \right| 
        + \int_0^{\zeta_n+\delta} \frac{X_{\beta}(s)}{y_{\beta}(s)} |X_0(s) - y_0(s)| ds 
        + \int_0^{\zeta_n+\delta} \frac{X_{\beta}(s)}{y_{\beta}(s)} |X_m(s) - y_m(s)| ds 
    \right) \\
    &\quad \times \exp\left( \int_0^{\zeta_n+\delta} C X_{\beta}(s) ds \right).
\end{align*}
We now bound the expectation of the martingale term. Applying the Burkholder–Davis–Gundy inequality followed by Jensen's inequality gives:
\begin{align*}
    \mathbb{E} \left[ \sup_{t \leq \zeta_n + \delta} |E_{\beta}(t)| \right] 
    &\leq C \, \mathbb{E} \left[ \langle E_{\beta} \rangle_{\zeta_n + \delta}^{1/2} \right] 
    \leq C \left( \mathbb{E} \left[ \langle E_{\beta} \rangle_{\zeta_n + \delta} \right] \right)^{1/2} \\
    &= C \left( \mathbb{E} \left[ \int_0^{\zeta_n + \delta} \frac{1}{K y_{\beta}(s)} \cdot \frac{X_{\beta}(s)}{y_{\beta}(s)} \left( f(X(s)) + d_1 \right) ds \right] \right)^{1/2} \\
    &\leq C \left( \int_0^{\zeta_n + \delta} \frac{ \mathbb{E}[X_{\beta}(s)] }{K y_{\beta}(s)^2} (r_1 + d_1) \, ds \right)^{1/2} \\
    &= C (r_1 + d_1)^{1/2} \left( \int_0^{\zeta_n + \delta} \frac{ \mathbb{E}[X_{\beta}(s)] }{K y_{\beta}(s)^2} \, ds \right)^{1/2}.
\end{align*}

Since the mutation term in the ODE system~\eqref{ODEs} vanishes as \( \alpha \to \infty \), the function \( y_1(t) \) converges to \( y_{\beta}(t) \). This convergence implies that the established upper bound for \( 1/y_1(t) \) in~\eqref{eq:upper_bound_1_over_y1} yields a corresponding bound for \( 1/y_{\beta}(t) \) in the limit:
% As observed from the ODE system~\eqref{ODEs}, the functions \( y_1(t) \) and \( y_{\beta}(t) \) coincide in the limit \( \alpha \to \infty \), since the mutation term vanishes. Therefore, by taking the limit \( \alpha \to \infty \) in the upper bound for \( 1/y_1(t) \) given in~\eqref{eq:upper_bound_1_over_y1}, we obtain a corresponding upper bound for \( 1/y_{\beta}(t) \). Specifically, we have:
\begin{align}
\label{eq:upper_bound_1_over_ybeta}
    \frac{1}{y_{\beta}(t)} \leq c_1 + c_2 n^{1 - \beta} e^{-\lambda_1 t},
\end{align}
where the constant \( c_2 \) has been adjusted to account for the behavior on \( t < 1 \).

By an analogous argument, taking the limit \( \alpha \to \infty \) or \( \beta \to -\infty \) in estimates~\eqref{eq:upper_bound_E_X1} and~\eqref{eq:upper_bound_E_X1_square} yields the following moment bounds for the pre-existing and mutation-derived resistant populations:
% Following the same reasoning, by taking the limit \( \alpha \to \infty \) or \( \beta \to -\infty \) in the estimates~\eqref{eq:upper_bound_E_X1} and~\eqref{eq:upper_bound_E_X1_square}, we obtain the following bounds for the moments of the pre-existing and mutation-derived resistant populations:
\begin{align}
\label{eq:first_mom_Xbeta}
    \mathbb{E}[X_{\beta}(t)] &\leq C n^{\beta - 1} e^{\lambda_1 t}, \\
    \label{eq:second_mom_Xbeta}
    \mathbb{E}[X_{\beta}(t)^2] &\leq C n^{2\beta - 2} e^{2\lambda_1 t}, \\
    \label{eq:first_mom_Xm}
    \mathbb{E}[X_m(t)] &\leq C n^{-\alpha} e^{\lambda_1 t},\\
    \label{eq:second_mom_Xm}
    \mathbb{E}[X_m(t)^2] &\leq C n^{-2\alpha} e^{2\lambda_1 t},
\end{align}
where \( C > 0 \) is a constant independent of \( n \) and \( t \). We therefore establish the bound
\begin{align*}
    \int_0^{\zeta_n + \delta} \frac{\mathbb{E}[X_{\beta}(s)]}{K y_{\beta}(s)^2} ds &= O(n^{-\beta}),
    % \Longrightarrow \mathbb{E} \left[ \sup_{t \leq \zeta_n + \delta} |E_{\beta}(t)| \right] &= O(n^{-\beta/2}).
\end{align*}
which implies
\begin{align}
    \mathbb{E} \left[ \sup_{t \leq \zeta_n + \delta} |E_{\beta}(t)| \right] &= O(n^{-\beta/2}).
\end{align}

To bound the second and third error terms, we apply the Cauchy–Schwarz inequality:
\begin{align*}
    \mathbb{E} \left[ \int_0^{\zeta_n+\delta} \frac{X_{\beta}(s)}{y_{\beta}(s)} \left| X_0(s) - y_0(s) \right| ds \right]
    &\leq \left( \int_0^{\zeta_n+\delta} \frac{\mathbb{E}[X_{\beta}(s)^2]}{y_{\beta}(s)^2} ds \right)^{1/2}
    \left( \int_0^{\zeta_n+\delta} \mathbb{E} \left[ \left( X_0(s) - y_0(s) \right)^2 \right] ds \right)^{1/2} \\
    &= O(n^{-1/2} \sqrt{\log n}), \\[1em]
    \mathbb{E} \left[ \int_0^{\zeta_n+\delta} \frac{X_{\beta}(s)}{y_{\beta}(s)} \left| X_m(s) - y_m(s) \right| ds \right]
    &\leq \left( \int_0^{\zeta_n+\delta} \frac{\mathbb{E}[X_{\beta}(s)^2]}{y_{\beta}(s)^2} ds \right)^{1/2}
    \left( \int_0^{\zeta_n+\delta} \mathbb{E} \left[ \left( X_m(s) - y_m(s) \right)^2 \right] ds \right)^{1/2} \\
    &= O(n^{1-\alpha-\beta} \sqrt{\log n}).
\end{align*}
These bounds follow from the estimate
\begin{align*}
    \int_0^{\zeta_n+\delta} \frac{\mathbb{E}[X_{\beta}(s)^2]}{y_{\beta}(s)^2} ds &= O(\log n),
\end{align*}
and
\begin{align}
    \mathbb{E} \left[ \left( X_m(s) - y_m(s) \right)^2 \right]
    \leq \mathbb{E} \left[ X_m(s)^2 \right] + y_m(s)^2
    \leq C n^{-2\alpha} e^{2\lambda_1 s},
\end{align}
where the upper bound on \( y_m(s)^2 \) follows from the assumption \( \phi(Ky(s)) \leq \lambda_1 \). Lastly, since $X_{\beta}(s)\le X_1(s)$ for all $s$, the boundedness result from (\ref{eq:integral_X1}) implies
\begin{align}
    \limsup_{n \to \infty} 
    \mathbb{E} \left[ 
        \int_0^{\zeta_n + \delta} X_{\beta}(s) \, ds 
    \right]
    < 
    \limsup_{n \to \infty} 
    \mathbb{E} \left[ 
        \int_0^{\zeta_n + \delta} X_1(s) \, ds 
    \right] < \infty.
\end{align}

To simplify the presentation, we define the following quantities:
\begin{align*}
    A(n) &:= n^u\left(\sup_{t \leq \zeta_n+\delta} \left| E_{\beta}(t) \right| 
        + \int_0^{\zeta_n+\delta} \frac{X_{\beta}(s)}{y_{\beta}(s)} |X_0(s) - y_0(s)| ds 
        + \int_0^{\zeta_n+\delta} \frac{X_{\beta}(s)}{y_{\beta}(s)} |X_m(s) - y_m(s)| ds \right), \\
    B(n) &:= \int_0^{\zeta_n+\delta} C X_{\beta}(s) ds.
\end{align*}
From the previous analysis, we obtain that for any exponent $u <\min\{ \beta/2 ,\alpha+\beta-1\}$, the following holds:
\[
\lim_{n \to \infty} \mathbb{E}[A(n)] = 0 \quad \text{and} \quad \limsup_{n \to \infty} \mathbb{E}[B(n)] < \infty.
% \quad\text{when}\quad  u <\min\{ \beta/2 ,\alpha+\beta-1\}.
\]
Following the same Gronwall inequality argument applied to the total resistant population, we conclude that
\[
\lim_{n \to \infty} \mathbb{P} \left( n^u\sup_{t \leq \zeta_n+\delta} \left| \frac{X_{\beta}(t)}{y_{\beta}(t)} - 1 \right| > \epsilon \right) = 0.
\]
\qed
\end{proof}

\section{Proof of Proposition \ref{prop:convergence_of_gamma_n}}
\label{sec:pf_prop_gamma_conv}
\begin{proof}
Let \( \epsilon_n := \epsilon n^{-u} \). We decompose the probability into two parts:
\begin{align}
\mathbb{P}\left( |\gamma_n - \zeta_n| > \epsilon_n \right) 
= \mathbb{P}\left( \gamma_n > \zeta_n + \epsilon_n \right) + \mathbb{P}\left( \gamma_n < \zeta_n - \epsilon_n \right).
\end{align}

\textbf{(1)} We begin by bounding the first term. From the definition of \( \gamma_n \) in (\ref{def:gamma}), it follows that:
\begin{align*}
\mathbb{P}\left( \gamma_n > \zeta_n + \epsilon_n \right) 
&= \mathbb{P}\left( \sup_{0 \leq t \leq \zeta_n + \epsilon_n} X_1(t) < \frac{n}{K} \right) \\
&= \mathbb{P}\left( \sup_{0 \leq t \leq \zeta_n + \epsilon_n} \frac{X_1(t)}{y_1(t)} \cdot y_1(t) < \frac{n}{K} \right) \\
&\leq \mathbb{P}\left( \inf_{0 \leq t \leq \zeta_n + \epsilon_n} \frac{X_1(t)}{y_1(t)} \cdot \sup_{0 \leq t \leq \zeta_n + \epsilon_n} y_1(t) < \frac{n}{K} \right).
\end{align*}
Because \( y_1(t) \) is a monotonic increasing, its supremum over the interval is attained at the endpoint:
\[
\sup_{0 \leq t \leq \zeta_n + \epsilon_n} y_1(t) = y_1(\zeta_n + \epsilon_n).
\]
Substituting this expression yields the bound:
\begin{align}
\mathbb{P}\left( \gamma_n > \zeta_n + \epsilon_n \right) 
\leq \mathbb{P}\left( \inf_{0 \leq t \leq \zeta_n + \epsilon_n} \frac{X_1(t)}{y_1(t)} < \frac{n}{K y_1(\zeta_n + \epsilon_n)} \right).
\end{align}
%    We now define an auxiliary ODE system with solution \( \bar{y}_1(t) \), given by:
% \begin{align*}
% \begin{cases}
% \displaystyle \frac{d\bar{y}_1}{dt} = \lambda_1(1 - (\bar{y}_0(t) + \bar{y}_1(t))) \bar{y}_1(t), \\
% \bar{y}_1(0) = \frac{n}{K},
% \end{cases}
% \end{align*}
% where we set \( \bar{y}_0(t) := \frac{n}{K} e^{\lambda_0 \zeta_n} e^{\lambda_0 t} \leq \frac{n}{K} e^{\lambda_0 \zeta_n} \) for all \( t \geq 0 \). 
% \smallskip
% \noindent
% It's not hard to find that we have \( y_1(\zeta_n + t) \geq \bar{y}_1(t) \).
% \smallskip
% \noindent

By Lemma~\ref{lem:zeta_lower_bound}, there exists a constant \( c > 0 \) such that for all sufficiently large \( n \), the following holds:
\begin{align}
\label{eq:zeta_n_lower}
\zeta_n > \zeta_n - \epsilon_n > c \log n, \text{ and } \frac{n}{K} n^{c\lambda_0} < \frac{1}{4}\left(1 - \frac{n}{K}\right).
\end{align}
% This implies the following upper bound for \( y_0(t + \zeta_n) \):
This yields the upper bound
\begin{align}
\label{eq:upper_y_0}
y_0(t+\zeta_n) = \frac{n}{K} e^{\lambda_0 \zeta_n} e^{\lambda_0 t} \leq \frac{n}{K} n^{c\lambda_0} < \frac{1}{4} \left(1 - \frac{n}{K} \right), \quad \text{for } t \geq 0.
\end{align}

% Now consider the auxiliary ODE defined in~\eqref{ODEs:y_bar}. We focus on the regime where \( \bar{y}_1(t+\zeta_n) \leq \frac{n}{K} + \frac{1}{4}(1 - \frac{n}{K}) \). From the upper bound \eqref{eq:upper_y_0}, we deduce:
Now consider the auxiliary ODE system defined in~\eqref{ODEs:y_bar}. We examine the regime where \( \bar{y}_1(t+\zeta_n) \leq \frac{n}{K} + \frac{1}{4}(1 - \frac{n}{K}) \). Using the upper bound from \eqref{eq:upper_y_0}, we obtain:
\[
1 - (y_0(t+\zeta_n) + \bar{y}_1(t+\zeta_n)) 
> 1 - \left( \frac{1}{4} (1 - \frac{n}{K}) + \frac{n}{K} + \frac{1}{4}(1 - \frac{n}{K}) \right) 
= \frac{1}{2} \left(1 - \frac{n}{K} \right).
\]
Consequently, the following differential inequality holds:
\begin{align*}
\frac{d\bar{y}_1(t+\zeta_n)}{dt} 
\geq \frac{1}{2} \lambda_1 \left(1 - \frac{n}{K} \right) \bar{y}_1(t+\zeta_n).
\end{align*}
Integrating this inequality yields the lower bound:
\begin{align*}
\bar{y}_1(t+\zeta_n) \geq \min\left\{  \bar{y}_1(\zeta_n) \cdot e^{ \frac{1}{2} \lambda_1 \left(1 - \frac{n}{K} \right)t }, \; \frac{n}{K} + \frac{1}{4} \left(1 - \frac{n}{K} \right) \right\}.
\end{align*}

Although the auxiliary ODE solution satisfies \( \bar{y}_1(\zeta_n) \leq \frac{n}{K} \), we impose the initial condition \( \bar{y}_1(\zeta_n) = \frac{n}{K} \) to match the known value \( y_1(\zeta_n) = \frac{n}{K} \). This choice preserves the lower bound for all \( t \geq \zeta_n \), yielding:
\begin{align}
y_1(\zeta_n + \epsilon_n) \geq \bar{y}_1(\zeta_n+\epsilon_n) 
\geq \min\left\{ \frac{n}{K} \cdot e^{ \frac{1}{2} \lambda_1 \left(1 - \frac{n}{K} \right)\epsilon_n }, \; \frac{n}{K} + \frac{1}{4} \left(1 - \frac{n}{K} \right) \right\}.
\end{align}
% From this lower bound on \( y_1(\zeta_n + \epsilon_n) \), it follows that:
From this bound, it follows that:
\[
\frac{n}{K y_1(\zeta_n + \epsilon_n)} \leq \max\left\{ e^{\frac{1}{2} \lambda_1 \left( \frac{n}{K} - 1 \right) \epsilon_n}, \; \frac{1}{1 + \frac{1}{4} \left( \frac{K}{n} - 1 \right)} \right\}.
\]
For sufficiently large \( n \), the right-hand side is bounded above by \( 1 - \varepsilon n^{-u} \) for some \( \varepsilon > 0 \). Hence,
\[
\mathbb{P}(\gamma_n > \zeta_n + \epsilon_n)
\leq \mathbb{P}\left( \inf_{0 \leq t \leq \zeta_n + \epsilon_n} \frac{X_1(t)}{y_1(t)} < 1 - \varepsilon n^{-u} \right).
\]
By Theorem~\ref{thm:ratio_conv}, the right-hand side converges to zero as \( n \to \infty \). We conclude that
\[
\lim_{n \to \infty} \mathbb{P}(\gamma_n > \zeta_n + \epsilon_n) = 0.
\]

\textbf{(2)} For the second term, we have:
\begin{align*}
\mathbb{P}(\gamma_n < \zeta_n - \epsilon_n) 
&= \mathbb{P}\left( \sup_{0 \leq t \leq \zeta_n - \epsilon_n} X_1(t) > \frac{n}{K} \right) \\
&= \mathbb{P}\left( \sup_{0 \leq t \leq \zeta_n - \epsilon_n} \frac{X_1(t)}{y_1(t)} \cdot y_1(t) > \frac{n}{K} \right) \\
&\leq \mathbb{P}\left( \sup_{0 \leq t \leq \zeta_n - \epsilon_n} \frac{X_1(t)}{y_1(t)} \cdot y_1(\zeta_n - \epsilon_n) > \frac{n}{K} \right). \\
\end{align*}
To bound \( y_1(\zeta_n - \epsilon_n) \), consider the interval \( t \in [\zeta_n - \epsilon_n, \zeta_n] \), where the dynamics satisfy:
\begin{align}
\frac{dy_1}{dt} = y_1 \cdot \phi(Ky) + y_0 \cdot n^{-\alpha}
\geq \lambda_1 \left( 1 - (y_0 + y_1) \right) y_1.
\end{align}
Because \( y_0(t) \) is decreasing and \( y_1(t) \) is increasing, it follows that:
\[
y_0(t) + y_1(t) \leq y_0(\zeta_n - \epsilon_n) + y_1(\zeta_n) 
\leq \frac{n}{K} e^{\lambda_0(\zeta_n - \epsilon_n)} + \frac{n}{K}.
\]
Using the bounds from \eqref{eq:zeta_n_lower} and \eqref{eq:upper_y_0}, we obtain that for sufficiently large \( n \), this sum is bounded above by $\frac{n}{K} + \frac{1}{4} \left( 1 - \frac{n}{K} \right)$. Therefore,
\[
\frac{dy_1}{dt} \geq \frac{3}{4} \lambda_1 \left( 1 - \frac{n}{K} \right) y_1.
\]
Integrating this inequality backward from \( \zeta_n \) yields:
\begin{align}
y_1(\zeta_n - \epsilon_n) \leq e^{\frac{3}{4} \lambda_1 \left( \frac{n}{K} - 1 \right) \epsilon_n} y_1(\zeta_n)
= e^{\frac{3}{4} \lambda_1 \left( \frac{n}{K} - 1 \right) \epsilon_n} \cdot \frac{n}{K}.
\end{align}
Therefore, the probability can be bounded as:
\begin{align*}
\mathbb{P}(\gamma_n < \zeta_n - \epsilon_n)
&\leq \mathbb{P}\left( \sup_{0 \leq t \leq \zeta_n - \epsilon_n} \frac{X_1(t)}{y_1(t)} > 
e^{\frac{3}{4} \lambda_1 \left( 1 - \frac{n}{K} \right) \epsilon_n} \right).
\end{align*}
For sufficiently large \( n \), the exponential lower bound derived above satisfies
\begin{align*}
    e^{\frac{3}{4} \lambda_1 \left( 1 - \frac{n}{K} \right) \epsilon_n}>1 + \varepsilon n^{-u}
\end{align*}
for some constant \( \varepsilon > 0 \). It follows from Theorem~\ref{thm:ratio_conv} that
\[
\lim_{n \to \infty} \mathbb{P}(\gamma_n < \zeta_n - \epsilon_n) = 0.
\]
\qed
\end{proof}

\section{Proof of Proposition \ref{prop:bound_of_In}}\label{sec:pf_prop_bound_In}
\begin{proof}
By Theorem~\ref{thm:ratio_conv} and Proposition~\ref{prop:convergence_of_gamma_n}, for any \( \varepsilon, \delta > 0 \), there exists \( n_0 > 0 \) such that for all \( n > n_0 \),
\[
\mathbb{P} \left( \sup_{t \leq \zeta_n+\delta} \frac{X_0(t) + X_1(t)}{y_0(t) + y_1(t)} < 1 + \varepsilon, \gamma_n<\zeta_n+\delta \right) > 1-\varepsilon.
\]
From inequality (\ref{eq:upper_bound_y0plusy1}) in Lemma~\ref{lem:zeta_upper_bound}, there further exists \( n_1 > 0 \) such that for all \( n > n_1 \),
 \[
\sup_{t\leq \zeta_n+\delta} K y_0(t) + K y_1(t) \leq \max \left\{ n + n^{\beta}, \, K y_0(\bar{\zeta}_n) + n e^{\lambda_1\delta} \right\},
 \]
where \( K y_0(\bar{\zeta}_n) = n^{1 + \lambda_0 \epsilon} \). Therefore, for sufficiently small $\delta>0$ and $\varepsilon$, there exists \( n_2 > 0 \) such that for all \( n > n_2 \),
\[
(1 + \varepsilon) \cdot \max \left\{ n + n^{\beta}, \, K y_0(\bar{\zeta}_n) + ne^{\lambda_1\delta} \right\} 
< \frac{1}{2} \left( K + n \right).
\]
Combining these results, for all \( n > \max\{n_0, n_1, n_2\} \), we have:
\[
\mathbb{P} \left( \sup_{t \leq \zeta_n+\delta} (X_0(t) + X_1(t)) < \frac{1}{2} \left( 1 + \frac{n}{K} \right), \gamma_n<\zeta_n+\delta \right) > 1-\varepsilon.
\]

Define the event
\[
\Omega_n := \left\{ \omega \,\middle|\, \sup_{t \leq \zeta_n+\delta} (X_0(t) + X_1(t)) \leq \frac{1}{2} \left( 1 + \frac{n}{K} \right), \gamma_n<\zeta_n+\delta \right\}.
\]
We have established that $\mathbb{P}(\Omega_n) \rightarrow 1$ as $n\rightarrow \infty$. By \textit{(A5)} of Assumption~\ref{assump:f}, for all $\omega\in \Omega_n$, the birth rate \( f(X_0, X_1) \) is bounded away from the death rate $d_1$. More precisely, define
\[
r_1^{\min} := \min_{\omega \in \Omega_n} f(K X_0(t), K X_1(t)) > d_1.
\]
To establish bounds on the number of surviving resistant clones in the original stochastic process, we introduce two auxiliary processes.

First, we define an \emph{upper envelope process}, denoted by \( \hat{Z}_0(s), \hat{Z}_1(s) \), and let \( \hat{I}_n(s) \) represent the number of surviving resistant clones at time \( s \) in this process. The upper envelope process evolves according to the same dynamics as the original process, except that resistant cells (those arising from mutations of sensitive cells) and their descendants experience no death events (i.e., have zero death rate). This modification ensures that any mutant clone that arises will survive indefinitely.

Next, we define a \emph{lower envelope process}, denoted by \( \bar{Z}_0(s), \bar{Z}_1(s) \), and let \( \bar{I}_n(s) \) represent the number of surviving resistant clones at time \( s \) in this process. In this lower envelope process, each resistant cell originating from mutation undergoes a birth–death process with a constant, state-independent birth rate \( r_1^{\min} \) and death rate \( d_1 \).

We now formally construct couplings between the original process and these two envelope processes.

\textbf{Upper envelope coupling:} We couple the upper envelope and original process so that each mutation in the upper envelope process simultaneously induces a mutation in the original process. Because mutant clones in the upper envelope process do not go extinct, we have \( \hat{I}_n(t) \geq I_n(t) \) for all \( t \geq 0 \).

\textbf{Lower envelope coupling:} The lower envelope process is similarly coupled to the original process through the following construction.
\begin{enumerate}
    \item Each mutation event in the lower envelope process triggers a mutation in the original process, ensuring that clones are generated in parallel in both processes.
    \item Let \( \bar{Z}_{1,i}(s) \) and \( Z_{1,i}(s) \) denote the population sizes of the \( i \)-th resistant clone in the lower envelope and original processes, respectively.
    \item For a birth event in clone \( Z_{1,i}(s) \), draw a uniform random variable \( U \sim \mathrm{Unif}[0,1] \). A corresponding birth event occurs in clone \( \bar{Z}_{1,i}(s) \) if
    \[
    U < \frac{\bar{Z}_{1,i}(s) r_1^{\min}}{Z_{1,i}(s) f(X_0(s), X_1(s))}.
    \]
    \item For each death event in clone \( Z_{1,i}(s) \), draw \( U \sim \mathrm{Unif}[0,1] \), and induce a death event in \( \bar{Z}_{1,i}(s) \) if
    \[
    U < \frac{\bar{Z}_{1,i}(s)}{Z_{1,i}(s)}.
    \]
\end{enumerate}
This coupling guarantees that \( \bar{Z}_{1,i}(s) \leq Z_{1,i}(s) \) for all \( s \in [0, t] \), because the two processes share the death events when their population sizes are equal, but the lower envelop process experiences fewer birth events. Therefore, under the event \( \Omega_n \), we have \( \bar{I}_n(t) \leq I_n(t) \).

From Theorem~2 in \cite{leder2024parameter}, it follows that:
\begin{align*}
\lim_{n \to \infty} \mathbb{P} \left( \left| \frac{1}{n^{1 - \alpha}} \hat{I}_n(\gamma_n) + \frac{1}{\lambda_0} \right| > \epsilon \right) &= 0, \text{ and}\\
\lim_{n \to \infty} \mathbb{P} \left( \left| \frac{1}{n^{1 - \alpha}} \bar{I}_n(\gamma_n) + \frac{r_1^{\min} - d_1}{\lambda_0 r_1^{\min}} \right| > \epsilon \right) &= 0.
\end{align*}
Define the constants
\begin{align*}
    c_I = \frac{1}{2} \cdot \frac{r_1^{\min} - d_1}{|\lambda_0 |r_1^{\min}}, \quad C_I = 2\cdot \frac{1}{|\lambda_0|}.
\end{align*}
It follows that
\begin{align*}
\mathbb{P} \left( c_I n^{1 - \alpha} \leq I_n(\gamma_n) \leq C_I n^{1 - \alpha} \right) 
&\geq \mathbb{P} \left( c_I n^{1 - \alpha} \leq I_n(\gamma_n) \leq C_I n^{1 - \alpha}, \Omega_n \right)  \xrightarrow[]{n \to \infty} 1.
\end{align*}
\qed

\end{proof}

\section{Proof of Proposition \ref{prop:conv_Xbeta}}
\label{sec:pf_prop_conv_Xbeta}
\begin{proof}
% To begin with, we express the dynamics of \( X_{\beta}(t) \) and \( X_1(t) \) as semimartingales involving compensated Poisson integrals:
Define the event
\begin{align*}
    \Omega_n = \left\{
        \gamma_n < \zeta_n + \delta,\;
        \sup_{t \leq \zeta_n + \delta} \left| \frac{X_0(t)}{y_0(t)} - 1 \right| < \epsilon,\;
        \sup_{t \leq \zeta_n + \delta} \left| \frac{X_1(t)}{y_1(t)} - 1 \right| < \epsilon,\;
        \sup_{t \leq \zeta_n + \delta} \left| \frac{X_{\beta}(t)}{y_{\beta}(t)} - 1 \right| < \epsilon
    \right\},
\end{align*}
which ensures that all subpopulations remain close to their deterministic counterparts up to time $\zeta_n + \delta$. The analysis below takes place on the event $\Omega_n$.

We first express \( X_{\beta}(t) \) and \( X_1(t) \) as semimartingales:
\begin{align}
\label{eqn:SDE_Xbeta}
X_{\beta}(t) &= X_{\beta}(0) + M_{\beta}(t) + \int_0^t X_{\beta}(s) \phi(KX(s)) \, ds, \\
\label{eqn:SDE_X1}
X_1(t) &= X_1(0) + M_1(t) + \int_0^t X_1(s) \phi(KX(s)) \, ds + n^{-\alpha} \int_0^t X_0(s) \, ds,
\end{align}
where the martingale terms \( M_{\beta}(t) \) and \( M_1(t) \) are given by:
\begin{align*}
M_{\beta}(t) &= \frac{1}{K} \int_0^t \int_0^{\infty} \mathbbm{1}_{\left\{ u \leq K X_{\beta}(s-) f(X(s-)) \right\}} \tilde{N}_1^b(ds, du) \\
&\quad - \frac{1}{K} \int_0^t \int_0^{\infty} \mathbbm{1}_{\left\{ u \leq K X_{\beta}(s-) d_1 \right\}} \tilde{N}_1^d(ds, du), \\
M_1(t) &= \frac{1}{K} \int_0^t \int_0^{\infty} \mathbbm{1}_{\left\{ u \leq K X_1(s-) f(X(s-)) \right\}} \tilde{N}_1^b(ds, du) \\
&\quad - \frac{1}{K} \int_0^t \int_0^{\infty} \mathbbm{1}_{\left\{ u \leq K X_1(s-) d_1 \right\}} \tilde{N}_1^d(ds, du) \\
&\quad + \frac{1}{K} \int_0^t \int_0^{\infty} \mathbbm{1}_{\left\{ u \leq K X_0(s-) n^{-\alpha} \right\}} \tilde{N}_0^m(ds, du).
\end{align*}

We define $\tau_{\beta} =\inf \left\{t : X_{\beta}(t) \leq 1/K\right\}$, $\tau_1 =\inf \left\{t : X_1(t) \leq 1/K\right\}$. Applying Itô's formula for semimartingales \cite{protter2012stochastic}  $X_{\beta}(t)$ and $X_1(t)$ yields:
%Differentiating both sides of \eqref{eqn:SDE_Xbeta} and \eqref{eqn:SDE_X1}, dividing by \( X_{\beta}(t) \) and \( X_1(t) \) respectively, and integrating, we obtain:
\begin{align}
\label{eq:log_X_beta}
\log X_{\beta}(t\wedge\tau_{\beta}) &= \log X_{\beta}(0) + \bar{M}_{\beta}(t\wedge\tau_{\beta}) + \int_0^{t\wedge\tau_{\beta}} \phi(KX(s)) \, ds + Q_{\beta}(t\wedge\tau_{\beta}), \\
\label{eq:log_X_1}
\log X_1(t\wedge\tau_1) &= \log X_1(0) + \bar{M}_1(t\wedge\tau_1) + \int_0^{t\wedge\tau_1} \phi(KX(s)) \, ds + n^{-\alpha} \int_0^{t\wedge\tau_1} \frac{X_0(s)}{X_1(s)} \, ds + Q_1(t\wedge\tau_1),
\end{align}
where
\begin{align*}
\bar{M}_{\beta}(t) &= \frac{1}{K} \int_0^t \int_0^{\infty} \frac{1}{X_{\beta}(s-)} \mathbbm{1}_{\left\{ u \leq K X_{\beta}(s-) f(X(s-)) \right\}} \tilde{N}_1^b(ds, du) \\
&\quad - \frac{1}{K} \int_0^t \int_0^{\infty} \frac{1}{X_{\beta}(s-)} \mathbbm{1}_{\left\{ u \leq K X_{\beta}(s-) d_1 \right\}} \tilde{N}_1^d(ds, du), \\
\bar{M}_1(t) &= \frac{1}{K} \int_0^t \int_0^{\infty} \frac{1}{X_1(s-)} \mathbbm{1}_{\left\{ u \leq K X_1(s-) f(X(s-)) \right\}} \tilde{N}_1^b(ds, du) \\
&\quad - \frac{1}{K} \int_0^t \int_0^{\infty} \frac{1}{X_1(s-)} \mathbbm{1}_{\left\{ u \leq K X_1(s-) d_1 \right\}} \tilde{N}_1^d(ds, du) \\
&\quad + \frac{1}{K} \int_0^t \int_0^{\infty} \frac{1}{X_1(s-)} \mathbbm{1}_{\left\{ u \leq K X_0(s-) n^{-\alpha} \right\}} \tilde{N}_0^m(ds, du).
\end{align*}
and 
\begin{align*}
Q_{\beta}(t) &= \int_0^t \int_0^{\infty} \left( \log\left(X_{\beta}(s-)+\frac{1}{K}\right)-\log\left(X_{\beta}(s-)\right)-\frac{1}{KX_{\beta}(s-)} \right)\mathbbm{1}_{\left\{ u \leq K X_{\beta}(s-) f(X(s-)) \right\}} N_1^b(ds, du) \\
&\quad + \int_0^t \int_0^{\infty} \left( \log\left(X_{\beta}(s-)-\frac{1}{K}\right)-\log\left(X_{\beta}(s-)\right)+\frac{1}{KX_{\beta}(s-)} \right)\mathbbm{1}_{\left\{ u \leq K X_{\beta}(s-) d_1 \right\}} N_1^d(ds, du), \\
Q_1(t) &= \int_0^t \int_0^{\infty} \left( \log\left(X_1(s-)+\frac{1}{K}\right)-\log\left(X_1(s-)\right)-\frac{1}{KX_1(s-)} \right)\mathbbm{1}_{\left\{ u \leq K X_1(s-) f(X(s-)) \right\}} N_1^b(ds, du) \\
&\quad + \int_0^t \int_0^{\infty} \left( \log\left(X_1(s-)-\frac{1}{K}\right)-\log\left(X_1(s-)\right)+\frac{1}{KX_1(s-)} \right)\mathbbm{1}_{\left\{ u \leq K X_1(s-) d_1 \right\}} N_1^d(ds, du)\\
&\quad + \int_0^t \int_0^{\infty} \left( \log\left(X_1(s-)+\frac{1}{K}\right)-\log\left(X_1(s-)\right)-\frac{1}{KX_1(s-)} \right)\mathbbm{1}_{\left\{ u \leq K X_0(s-) n^{-\alpha} \right\}} N_0^m(ds, du).
\end{align*}

Since $\inf_{t\leq \zeta_n+\delta} y_{\beta}(t) = \inf_{t\leq \zeta_n+\delta} y_1(t) = n^{\beta}/K$, under the event $\Omega_n$, it follows that $\gamma_n < \zeta_n + \delta < \min\{\tau_{\beta},\tau_1\}$.
From the definition of $ \gamma_n $, where $n = KX_1(\gamma_n)$, and by substituting $t$ with $\gamma_n$ into (\ref{eq:log_X_beta}) and (\ref{eq:log_X_1}) and exponentiating both sides, we obtain
% Through the definition of \( \gamma_n \), we know:
\begin{align*}
    n = KX_1(\gamma_n) 
    &= n^{\beta} \exp\left( \bar{M}_1(\gamma_n) \right) 
       \exp\left( \int_0^{\gamma_n} \phi(KX(s)) \, ds \right)
       \exp\left( n^{-\alpha} \int_0^{\gamma_n} \frac{X_0(s)}{X_1(s)} \, ds \right) \exp\left(Q_1(\gamma_n)\right).
\end{align*}
Therefore,
\begin{align*}
    KX_{\beta}(\gamma_n) 
    &= n^{\beta} \exp\left( \bar{M}_{\beta}(\gamma_n) \right) 
       \exp\left( \int_0^{\gamma_n} \phi(KX(s)) \, ds \right) \exp\left( Q_{\beta}(\gamma_n) \right) \\
    &= n \cdot \exp\Bigg( 
        \bar{M}_{\beta}(\gamma_n) 
        - \bar{M}_1(\gamma_n) 
        - n^{-\alpha} \int_0^{\gamma_n} \frac{X_0(s)}{X_1(s)} \, ds + Q_{\beta}(\gamma_n)-Q_1(\gamma_n) \Bigg).
\end{align*}
Taking logarithm yields
\begin{align}
    -\log \left( \frac{KX_{\beta}(\gamma_n)}{n} \right) 
    &=  n^{-\alpha} \int_0^{\gamma_n} \frac{X_0(s)}{X_1(s)} \, ds + \bar{M}_1(\gamma_n) -\bar{M}_{\beta}(\gamma_n) + Q_1(\gamma_n)-Q_{\beta}(\gamma_n).
    \nonumber
\end{align}
    % By Burkholder–Davis–Gundy inequality, we have 
    % \begin{align*}
    %     \mathbb{E} \left[ \sup_{s \leq t} \left| E_{\beta}(s) \right| \right]  &\leq C_1  \mathbb{E} \left[ \langle E_{\beta} \rangle_{t}^{1/2} \right] \\
    %     &\leq C_2 \mathbb{E} \left[ \left( \int_0^t \frac{1}{KX_{\beta}(s)}ds \right)^{1/2} \right]\\
    %      \mathbb{E} \left[ \sup_{s \leq t} \left| E_1(s) \right| \right]  &\leq C_3  \mathbb{E} \left[ \langle E_1 \rangle_{t}^{1/2} \right] \\
    %     &\leq C_4 \mathbb{E} \left[ \left( \int_0^t \left(\frac{1}{KX_{\beta}(s)}+\frac{n^{-\alpha}X_0(s)}{KX_1(s)^2}\right)ds \right)^{1/2} \right]
    % \end{align*}

We first consider the first term in the right hand side.
Under the event $\Omega_n$, using inequality (\ref{eq:upper_bound_1_over_y1}), we obtain the following bounds:
% then under the event \( \Omega_n \), by (\ref{eq:upper_bound_1_over_y1}) we have the following bounds:
\begin{align*}
    n^{-\alpha} \int_0^{\gamma_n} \frac{X_0(s)}{X_1(s)} \, ds
    &\leq n^{-\alpha} \int_0^{\gamma_n} \frac{y_0(s)(1 + \epsilon)}{y_1(s)(1 - \epsilon)} \, ds 
    \leq C n^{1 - \alpha - \beta}, \\
    n^{-\alpha} \int_0^{\gamma_n} \frac{X_0(s)}{X_1(s)} \, ds
    &\geq n^{-\alpha} \int_0^{\gamma_n} \frac{y_0(s)(1 - \epsilon)}{y_1(s)(1 + \epsilon)} \, ds 
    \geq c n^{1 - \alpha - \beta}.
    %, \\
    %n^{-\alpha}\int_0^{\gamma_n}\frac{X_0(s)}{K X_1(s)^2} \, ds
    %&\leq n^{-\alpha} \int_0^{\gamma_n} \frac{y_0(s)(1 - \epsilon)}{K y_1(s)^2(1 + \epsilon)^2} \, ds
    %= O(n^{1 - \alpha - 2\beta}).
\end{align*}
The explicit values for the constants $c$ and $C$ can be derived from the analysis in Proposition~\ref{prop:ybeta} (specifically, from equation (\ref{eq:bound_log_ybeta})), yielding the choices:
\begin{align}
\label{eq:setting_C}
c = \frac{1}{2}(\lambda_1 - \lambda_0), \qquad
C = \frac{2}{\bar{\lambda}_1 - \lambda_0}.
\end{align}
% To be precise, we observe from the proof of Proposition~\ref{prop:ybeta} (\ref{eq:bound_log_ybeta}) that we can choose
% \begin{align}
% \label{eq:setting_C}
% c = \frac{1}{2}(\lambda_1 - \lambda_0), \qquad
% C = \frac{2}{\bar{\lambda}_1 - \lambda_0}.
% \end{align}

% By (\ref{eq:upper_bound_1_over_y1}), (\ref{eq:upper_bound_1_over_ybeta}) and (\ref{eq:first_mom_Xm}), we also have the probabilistic estimate, for any $\theta>0$:
% \begin{align*}
%     &\mathbb{P}\left( \int_0^{\gamma_n} \frac{(f(X(s)) + d_1) X_m(s)}{K X_1(s) X_{\beta}(s)} \, ds >\theta n^{1 - \alpha - \beta} \right) \\
%     \leq\;& \mathbb{P}\left( \int_0^{\gamma_n} \frac{(f(X(s)) + d_1) X_m(s)}{K X_1(s) X_{\beta}(s)} \, ds >\theta n^{1 - \alpha - \beta},\; \Omega_n \right)
%     + \mathbb{P}(\Omega_n^C) \\
%     \leq\;& \mathbb{P}\left( \int_0^{\zeta_n + \delta} \frac{X_m(s)}{K y_1(s) y_{\beta}(s)} \, ds 
%     > \frac{(1 - \epsilon)^2\theta}{r_1 + d_1} n^{1 - \alpha - \beta} \right)
%     + \mathbb{P}(\Omega_n^C) \\
%     =\;& O\left( n^{\alpha + \beta - 1} \int_0^{\zeta_n + \delta} \frac{\mathbb{E}[X_m(s)]}{K y_1(s) y_{\beta}(s)} \, ds\right)
%     + \mathbb{P}(\Omega_n^C) \\
%     =\;& O(n^{-\beta}) + \mathbb{P}(\Omega_n^C) \xrightarrow[n \to \infty]{} 0.
% \end{align*}

The difference $\bar{M}_1(\gamma_n) - \bar{M}_{\beta}(\gamma_n)$ can be expressed as a sum of stochastic integrals with respect to the compensated Poisson measures for birth, death, and mutation events:
\begin{align*}
    \bar{M}_1(\gamma_n) - \bar{M}_{\beta}(\gamma_n) &= 
    \frac{1}{K} \int_0^{\gamma_n} \int_0^{\infty} 
        \frac{-X_m(s-)}{X_1(s-)X_{\beta}(s-)} 
        \mathbbm{1}_{\left\{ u \leq K X_{\beta}(s-) f(X(s-)) \right\}} \tilde{N}_1^b(ds, du) \\
    &\quad + \frac{1}{K} \int_0^{\gamma_n} \int_0^{\infty} 
        \frac{1}{X_1(s-)} 
        \mathbbm{1}_{\left\{ K X_{\beta}(s-) f(X(s-)) < u \leq K X_1(s-) f(X(s-)) \right\}} \tilde{N}_1^b(ds, du) \\
    &\quad - \frac{1}{K} \int_0^{\gamma_n} \int_0^{\infty} 
        \frac{-X_m(s-)}{X_1(s-)X_{\beta}(s-)} 
        \mathbbm{1}_{\left\{ u \leq K X_{\beta}(s-) d_1 \right\}} \tilde{N}_1^d(ds, du) \\
    &\quad - \frac{1}{K} \int_0^{\gamma_n} \int_0^{\infty} 
        \frac{1}{X_1(s-)} 
        \mathbbm{1}_{\left\{ K X_{\beta}(s-) d_1 < u \leq K X_1(s-) d_1 \right\}} \tilde{N}_1^d(ds, du) \\
    &\quad + \frac{1}{K} \int_0^{\gamma_n} \int_0^{\infty} 
        \frac{1}{X_1(s-)} 
        \mathbbm{1}_{\left\{ u \leq K X_0(s-) n^{-\alpha} \right\}} \tilde{N}_1^m(ds, du).
\end{align*}
Thus, it suffices to analyze the following three terms:
\begin{align}
    D_1(t) &:= \frac{1}{K} \int_0^t \int_0^{\infty} 
        \frac{X_m(s-)}{X_1(s-) X_{\beta}(s-)} 
        \mathbbm{1}_{\left\{ u \leq K X_{\beta}(s-) \right\}} \tilde{N}(ds, du), \\
    D_2(t) &:= \frac{1}{K} \int_0^t \int_0^{\infty} 
        \frac{1}{X_1(s-)} 
        \mathbbm{1}_{\left\{ K X_{\beta}(s-) < u \leq K X_1(s-) \right\}} \tilde{N}(ds, du), \\
    D_3(t) &:= \frac{1}{K} \int_0^t \int_0^{\infty} 
        \frac{1}{X_1(s-)} 
        \mathbbm{1}_{\left\{ u \leq K X_0(s-) n^{-\alpha} \right\}} \tilde{N}(ds, du),
\end{align}
where \( \tilde{N}(ds, du) \) denotes the corresponding compensated Poisson martingale measure. We begin by establishing a bound for \( D_1(t) \). For $t\le \gamma_n$, and conditional on the event $\Omega_n$, we have
\begin{align*}
    D_1(t) 
    &\leq \frac{1}{K} \int_0^{\zeta_n+\delta} \int_0^{\infty} 
    \frac{X_m(s-)}{y_1(s)y_{\beta}(s)(1 - \epsilon)^2} 
    \mathbbm{1}_{\{ u \leq K y_{\beta}(s)(1 + \epsilon) \}} 
    \tilde{N}(ds, du) 
    =: \bar{D}_1(\zeta_n + \delta).
\end{align*}
To bound the expectation of $\bar{D}_1(\zeta_n + \delta)$, we apply Jensen's inequality and (\ref{eq:second_mom_Xm}):
% By Jensen's inequality and (\ref{eq:second_mom_Xbeta}), we obtain:
\begin{align*}
    \mathbb{E}[\bar{D}_1(\zeta_n + \delta)] 
    &\leq \mathbb{E} \left[ \bar{D}_1(\zeta_n + \delta)^2 \right] ^{1/2} \leq \mathbb{E} \left[ \langle \bar{D}_1 \rangle_{\zeta_n + \delta} \right]^{1/2}\\
    &= C\left( \int_0^{\zeta_n + \delta} \frac{\mathbb{E}[X_m(s)^2]}{K y_1(s)^2 y_{\beta}(s)} ds \right)^{1/2}
    = O(n^{1 - \alpha - 3\beta/2}).
\end{align*}
We now bound the remaining terms \( D_2(t) \) and \( D_3(t) \) using a similar argument. For $D_2(t)$, we have
\begin{align*}
    D_2(t) 
    &= \frac{1}{K} \int_0^t \int_0^{\infty} 
    \frac{1}{X_1(s-)} 
    \mathbbm{1}_{\{ u \leq K X_m(s-) \}} 
    \tilde{N}(ds, du) \\
    &\leq \frac{1}{K} \int_0^{\zeta_n+\delta} \int_0^{\infty} 
    \frac{1}{y_1(s)(1 - \epsilon)} 
    \mathbbm{1}_{\{ u \leq K X_m(s-) \}} 
    \tilde{N}(ds, du) 
    =: \bar{D}_2(\zeta_n + \delta),
\end{align*}
and
\begin{align*}
    \mathbb{E}[\bar{D}_2(\zeta_n + \delta)] 
    &\leq  \mathbb{E} \left[ \langle \bar{D}_2 \rangle_{\zeta_n + \delta} \right]^{1/2} = C\left( \int_0^{\zeta_n + \delta} \frac{\mathbb{E}[X_m(s)]}{K y_1(s)^2} ds \right)^{1/2}
    = O(n^{1/2 - \alpha/2 - \beta}).
\end{align*}
For \( D_3(t) \), we have
\begin{align*}
    D_3(t) 
    &\leq \frac{1}{K} \int_0^{\zeta_n+\delta} \int_0^{\infty} 
    \frac{1}{y_1(s)(1 - \epsilon)} 
    \mathbbm{1}_{\{ u \leq K y_0(s)(1 + \epsilon) n^{-\alpha} \}} 
    \tilde{N}(ds, du) 
    =: \bar{D}_3(\zeta_n + \delta),
\end{align*}
and
\begin{align*}
    \mathbb{E}[\bar{D}_3(\zeta_n + \delta)] 
    &\leq  \mathbb{E} \left[ \langle \bar{D}_3 \rangle_{\zeta_n + \delta}^{1/2} \right] = C\left( \int_0^{\zeta_n + \delta} \frac{y_0(s) n^{-\alpha}}{K y_1(s)^2} ds \right)^{1/2}
    = O(n^{1/2 - \alpha/2 - \beta}).
\end{align*}
Therefore, for any $\theta>0$ and \( i = 1, 2, 3 \), Markov's inequality yields:
\begin{align*}
    \mathbb{P}(D_i(t) > \theta n^{1 - \alpha - \beta}) 
    &\leq \mathbb{P}(\Omega_n^c) + \mathbb{P}(D_i(t) >\theta n^{1 - \alpha - \beta}, \Omega_n) \\
    &\leq \mathbb{P}(\Omega_n^c) + \mathbb{P}(\bar{D}_i(\zeta_n + \delta) >\theta n^{1 - \alpha - \beta}) \\
    &\leq \mathbb{P}(\Omega_n^c) + \theta^{-1}n^{\alpha + \beta - 1} \mathbb{E}[\bar{D}_i(\zeta_n + \delta)] \xrightarrow[n \to \infty]{} 0,
\end{align*}
which implies that for any $\theta>0$,
\begin{align}
\label{eq:log_bound_Xbeta}
    \lim_{n \to \infty} \mathbb{P}\left( \bar{M}_1(\gamma_n) - \bar{M}_{\beta}(\gamma_n) >\theta n^{1 - \alpha - \beta} \right) = 0.
\end{align}

Lastly, we analyze the term $Q_1(\gamma_n) - Q_{\beta}(\gamma_n)$. By Taylor's theorem, we have 
\begin{align*}
    Q_1(\gamma_n) - Q_{\beta}(\gamma_n) &=  
    \int_0^{\gamma_n} \int_0^{\infty} 
        \left(-\frac{1}{2K^2X_1(s-)^2} +O\left(\frac{1}{K^3X_1(s-)^3}\right) \right)
        \mathbbm{1}_{\left\{ u \leq K X_1(s-) f(X(s-)) \right\}} N_1^b(ds, du) \\
        &\quad +     \int_0^{\gamma_n} \int_0^{\infty} 
        \left(-\frac{1}{2K^2X_1(s-)^2} +O\left(\frac{1}{K^3X_1(s-)^3}\right) \right)
        \mathbbm{1}_{\left\{ u \leq K X_1(s-) d_1 \right\}} N_1^d(ds, du) \\
        &\quad +     \int_0^{\gamma_n} \int_0^{\infty} 
        \left(-\frac{1}{2K^2X_1(s-)^2} +O\left(\frac{1}{K^3X_1(s-)^3}\right) \right)
        \mathbbm{1}_{\left\{ u \leq K X_0(s-) n^{-\alpha} \right\}} N_0^m(ds, du) \\
   &\quad - \int_0^{\gamma_n} \int_0^{\infty} 
        \left(-\frac{1}{2K^2X_{\beta}(s-)^2} +O\left(\frac{1}{K^3X_{\beta}(s-)^3}\right) \right)
        \mathbbm{1}_{\left\{ u \leq K X_{\beta}(s-) f(X(s-)) \right\}} N_1^b(ds, du) \\
    &\quad - \int_0^{\gamma_n} \int_0^{\infty} 
        \left(-\frac{1}{2K^2X_{\beta}(s-)^2} +O\left(\frac{1}{K^3X_{\beta}(s-)^3}\right) \right)
        \mathbbm{1}_{\left\{ u \leq K X_{\beta}(s-) d_1 \right\}} N_1^d(ds, du) \\
    &= -\frac{1}{2}\int_0^{\gamma_n} \int_0^{\infty} 
        \frac{1}{K^2X_1(s-)^2} \mathbbm{1}_{\left\{ u \leq K X_1(s-) f(X(s-)) \right\}} \tilde{N}_1^b(ds, du)\\
        &\quad - \frac{1}{2}\int_0^{\gamma_n} \int_0^{\infty} 
        \frac{1}{K^2X_1(s-)^2} \mathbbm{1}_{\left\{ u \leq K X_1(s-) d_1 \right\}} \tilde{N}_1^d(ds, du)\\
        &\quad - \frac{1}{2}\int_0^{\gamma_n} \int_0^{\infty} 
        \frac{1}{K^2X_1(s-)^2} \mathbbm{1}_{\left\{ u \leq K X_0(s-) n^{-\alpha} \right\}} \tilde{N}_0^m(ds, du)\\
        &\quad + \frac{1}{2}\int_0^{\gamma_n} \int_0^{\infty} 
        \frac{1}{K^2X_{\beta}(s-)^2} \mathbbm{1}_{\left\{ u \leq K X_{\beta}(s-) f(X(s-)) \right\}} \tilde{N}_1^b(ds, du)\\
        &\quad + \frac{1}{2}\int_0^{\gamma_n} \int_0^{\infty} 
        \frac{1}{K^2X_{\beta}(s-)^2} \mathbbm{1}_{\left\{ u \leq K X_{\beta}(s-) d_1 \right\}} \tilde{N}_1^d(ds, du)\\
        &\quad -\frac{1}{2}\int_0^{\gamma_n}\frac{f(X(s)+d_1)}{KX_1(s)}ds+ \frac{1}{2}\int_0^{\gamma_n}\frac{f(X(s)+d_1)}{KX_{\beta}(s)}ds - \frac{1}{2}\int_0^{\gamma_n}\frac{X_0(s)n^{-\alpha}}{KX_1(s)^2}ds + R_n\\
        & = o\left(D_1(\gamma_n)\right) + o\left(D_2(\gamma_n)\right)+o\left(D_3(\gamma_n)\right) +R_n \\
        &\quad  +\int_0^{\gamma_n} \frac{(f(X(s)) + d_1) X_m(s)}{K X_1(s) X_{\beta}(s)} \, ds - \frac{1}{2}\int_0^{\gamma_n}\frac{X_0(s)n^{-\alpha}}{KX_1(s)^2}ds .\\
\end{align*}
where \(R_n\) denotes a negligible remainder term. The last equality comes from the fact on the event $\Omega_n$, $KX_1(s)\geq KX_{\beta}(s)\geq (1-\epsilon)n^{\beta}$ for $s\leq \zeta_n+\delta$. Thus, it suffices to analyze the last two terms. On the event \( \Omega_n \), by (\ref{eq:upper_bound_1_over_y1}) we have
\begin{align*}
n^{-\alpha}\int_0^{\gamma_n}\frac{X_0(s)}{K X_1(s)^2} \, ds
    &\leq n^{-\alpha} \int_0^{\gamma_n} \frac{y_0(s)(1 - \epsilon)}{K y_1(s)^2(1 + \epsilon)^2} \, ds
    = O(n^{1 - \alpha - 2\beta}).    
\end{align*}
Applying bounds from (\ref{eq:upper_bound_1_over_y1}), (\ref{eq:upper_bound_1_over_ybeta}), and the moment estimate (\ref{eq:first_mom_Xm}), we obtain for any \(\theta>0\):
% Moreover, by (\ref{eq:upper_bound_1_over_y1}), (\ref{eq:upper_bound_1_over_ybeta}) and (\ref{eq:first_mom_Xm}), we obtain the following probabilistic estimate for any \(\theta>0\):
 \begin{align*}
     &\mathbb{P}\left( \int_0^{\gamma_n} \frac{(f(X(s)) + d_1) X_m(s)}{K X_1(s) X_{\beta}(s)} \, ds >\theta n^{1 - \alpha - \beta} \right) \\
     \leq\;& \mathbb{P}\left( \int_0^{\gamma_n} \frac{(f(X(s)) + d_1) X_m(s)}{K X_1(s) X_{\beta}(s)} \, ds >\theta n^{1 - \alpha - \beta},\; \Omega_n \right)
     + \mathbb{P}(\Omega_n^C) \\
     \leq\;& \mathbb{P}\left( \int_0^{\zeta_n + \delta} \frac{X_m(s)}{K y_1(s) y_{\beta}(s)} \, ds 
     > \frac{(1 - \epsilon)^2\theta}{r_1 + d_1} n^{1 - \alpha - \beta} \right)
     + \mathbb{P}(\Omega_n^C) \\
     =\;& O\left( n^{\alpha + \beta - 1} \int_0^{\zeta_n + \delta} \frac{\mathbb{E}[X_m(s)]}{K y_1(s) y_{\beta}(s)} \, ds\right)
     + \mathbb{P}(\Omega_n^C) \\
     =\;& O(n^{-\beta}) + \mathbb{P}(\Omega_n^C) \xrightarrow[n \to \infty]{} 0,
 \end{align*}
which implies \(Q_1(\gamma_n) - Q_{\beta}(\gamma_n) =o\left( n^{1-\alpha-\beta} \right)\). Consequently,
\begin{align*}
    \lim_{n \to \infty} \mathbb{P}\left( c n^{1 - \alpha - \beta} < -\log \left( \frac{K X_{\beta}(\gamma_n)}{n} \right) < C n^{1 - \alpha - \beta} \right) = 1.
\end{align*}
\qed
\end{proof}

    % Under the event of $\Omega_n$, we have 
    % \begin{align*}
    %     X_m(s) &= X_1(s)-X_{\beta}(s) \\
    %     &= (X_1(s)-y_1(s))-(X_{\beta}(s)-y_{\beta}(s))+ y_1(s)-y_{\beta}(s)\\
    %     & \leq |X_1(s)-y_1(s)|-|X_{\beta}(s)-y_{\beta}(s)|+ y_m(s)\\
    %     &\leq y_m(s) + y_1(s)\epsilon + y_{\beta}(s)\epsilon
    % \end{align*}

\section{Proof of Theorem \ref{thm: estimators}}
\label{sec:pf_thm_estimators}
\begin{proof} In what follows, We prove the consistency for the estimators $\hat{\alpha}$, $\hat{\beta}$, $\hat{\lambda}_0$, and $\hat{\lambda}_1$.
\\

\noindent\textbf{(1)} \textbf{\boldmath$\hat{\alpha}$:} From Proposition~\ref{prop:bound_of_In}, we obtain
\begin{align*}
    \lim_{n\to\infty}\mathbb{P} \left( c n^{1 - \alpha} \leq I_n(\gamma_n) \leq C n^{1 - \alpha} \right) = 1.
\end{align*}
Taking logarithms yields
\begin{align*}
    \lim_{n\to\infty}\mathbb{P} \left( \alpha-\log_n c \leq 1 - \log_n I_n(\gamma_n) \leq \alpha-\log_n C \right) = 1.
\end{align*}
Since $\log_n c \rightarrow 0$ and $\log_n C \rightarrow 0$ as $n \rightarrow \infty$, it follows that
\[
\hat{\alpha} - \alpha = 1 - \log_n I_n(\gamma_n) - \alpha \xrightarrow{p} 0, 
\]
establishing the consistency of the estimator $\hat{\alpha}$.
\\

\noindent\textbf{(2)} \textbf{\boldmath$\hat{\beta}$:} From the proof of Proposition~\ref{prop:ybeta} (specifically equation (\ref{eq:y_beta_zeta})), we obtain
\begin{align*}
   n - K y_{\beta}(\zeta_n) 
   &= n \left( 1 - \exp\left( -n^{-\alpha} \int_0^{\zeta_n} \frac{y_0}{y_1} \, ds \right) \right) \\
   &= O\left( n \left( 1 - \exp\left( -\frac{n^{1 - \alpha - \beta}}{2(\lambda_1(s))- \lambda_0(s))} \right) \right) \right) \\
   &= O\left( n^{2 - \alpha - \beta} \right),
\end{align*}
This asymptotic bound further implies $\frac{K y_{\beta}(\zeta_n)}{n} \rightarrow 1$ as $n\rightarrow \infty$.

Now consider:
\begin{align*}
    \hat{\beta} - \beta 
    &= 1 - \hat{\alpha} - \frac{\log \log \left( \frac{n}{Z_{\beta}(\gamma_n)} \right)}{\log n} - \beta \\
    &= (1 - \alpha) + (\alpha - \hat{\alpha}) 
    - \frac{\log \log \left( \frac{n}{Z_{\beta}(\gamma_n)} \right)}{\log n}
    + \frac{\log \log \left( \frac{n}{K y_{\beta}(\zeta_n)} \right)}{\log n}
    - \frac{\log \log \left( \frac{n}{K y_{\beta}(\zeta_n)} \right)}{\log n}
    - \beta \\
    &\leq |\alpha - \hat{\alpha}| 
    + \left| 1 - \alpha - \beta - \frac{\log \log \left( \frac{n}{K y_{\beta}(\zeta_n)} \right)}{\log n} \right|
    + \left| \frac{\log \log \left( \frac{n}{Z_{\beta}(\gamma_n)} \right)}{\log n} 
    - \frac{\log \log \left( \frac{n}{K y_{\beta}(\zeta_n)} \right)}{\log n} \right|.
\end{align*}
By Proposition~\ref{prop:ybeta} and the established convergence \( \hat{\alpha} \xrightarrow{p} \alpha \), the first two terms converge to zero in probability. It therefore suffices to analyze the asymptotic behavior of the remaining term:
\begin{align*}
   \frac{\log \log \left( \frac{n}{Z_{\beta}(\gamma_n)} \right)}{\log n} - \frac{\log \log \left( \frac{n}{K y_{\beta}(\zeta_n)} \right)}{\log n} 
   &= \frac{1}{\log n} \log \left( \frac{ \log \left( \frac{Z_{\beta}(\gamma_n) - n}{n} + 1 \right) }{ \log \left( \frac{K y_{\beta}(\zeta_n) - n}{n} + 1 \right) } \right) \\
   &= \frac{1}{\log n} \log \left( \frac{ Z_{\beta}(\gamma_n) - n + o(Z_{\beta}(\gamma_n) - n) }{ K y_{\beta}(\zeta_n) - n + o(K y_{\beta}(\zeta_n) - n) } \right).
\end{align*}
By Proposition~\ref{prop:conv_Xbeta}, we have
\begin{align*}
    \lim_{n \to \infty} \mathbb{P} \left( c n^{1 - \alpha - \beta} 
    < -\log \left( \frac{Z_{\beta}(\gamma_n)}{n} \right) 
    < C n^{1 - \alpha - \beta} \right) = 1.
\end{align*}
This implies
\begin{align*}
    \lim_{n \to \infty} \mathbb{P} \left( 
    n - n \exp(-c n^{1 - \alpha - \beta}) 
    < n - Z_{\beta}(\gamma_n) 
    < n - n \exp(-C n^{1 - \alpha - \beta}) 
    \right) = 1.
\end{align*}
Applying a Taylor expansion to the exponential terms yields
\begin{align*}
    \lim_{n \to \infty} 
    \mathbb{P} \left( 
    \frac{c}{2} < 
    \frac{n - Z_{\beta}(\gamma_n)}{n^{2 - \alpha - \beta}} 
    < 2C 
    \right) = 1.
\end{align*}
From the proofs of Proposition~\ref{prop:ybeta} and Proposition~\ref{prop:conv_Xbeta}, particularly drawing on equations (\ref{eq:bound_log_ybeta}) and  (\ref{eq:setting_C}), we establish that for the same constants \( c, C > 0 \), the following bounds hold:
\begin{align*}
    \frac{c}{2} < 
    \liminf_{n \to \infty} \frac{n - K y_{\beta}(\zeta_n)}{n^{2 - \alpha - \beta}} 
    \leq 
    \limsup_{n \to \infty} \frac{n - K y_{\beta}(\zeta_n)}{n^{2 - \alpha - \beta}} 
    < 2C.
\end{align*}
Therefore, both \( n - Z_{\beta}(\gamma_n) \) and \( n - K y_{\beta}(\zeta_n) \) are of order \( n^{2 - \alpha - \beta} \) with high probability, and their ratio remains bounded away from zero and infinity. Thus, for any \( \epsilon > 0 \),
\begin{align*}
    \lim_{n \to \infty} 
    \mathbb{P} \left( 
        \frac{1}{\log n} \log \left( \frac{ Z_{\beta}(\gamma_n) - n + o(Z_{\beta}(\gamma_n) - n) }{ K y_{\beta}(\zeta_n) - n + o(K y_{\beta}(\zeta_n) - n) } \right) > \epsilon 
    \right) = 0,
\end{align*}
which completes the proof.
\\

\noindent\textbf{(3)} \textbf{\boldmath$\hat{\lambda}_0$:}
We now analyze the convergence of the estimator $\hat{\lambda}_0$. Consider the following decomposition:
\begin{align*}
    |\hat{\lambda}_0 - \lambda_0| 
    &= \left| \frac{1}{\gamma_n} \log \frac{Z_0(\gamma_n)}{n} - \lambda_0 \right| \\
    &= \left| \frac{1}{\gamma_n} \log \frac{K X_0(\gamma_n)}{n} 
    - \frac{1}{\gamma_n} \log \frac{K y_0(\gamma_n)}{n} 
    + \frac{1}{\gamma_n} \log \frac{K y_0(\gamma_n)}{n} - \lambda_0 \right|.
\end{align*}
Because \( K y_0(\gamma_n) = n e^{\lambda_0 \gamma_n} \), the last two terms combine to yield zero:
\begin{align*}
    |\hat{\lambda}_0 - \lambda_0|
    &= \left| \frac{1}{\gamma_n} \log \frac{X_0(\gamma_n)}{y_0(\gamma_n)} \right| 
    = \frac{1}{\gamma_n} \left| \log \left( \frac{X_0(\gamma_n)}{y_0(\gamma_n)} \right) \right| 
    = \frac{1}{\gamma_n} \left| \log \left( 1 + \left( \frac{X_0(\gamma_n)}{y_0(\gamma_n)} - 1 \right) \right) \right|.
\end{align*}
By Theorem~\ref{thm:ratio_conv} and Proposition~\ref{prop:convergence_of_gamma_n}, we have:
\begin{align*}
    &\lim_{n \to \infty} \mathbb{P}(|\gamma_n - \zeta_n| > \delta) = 0, \text{ and} \\
    &\lim_{n \to \infty} \mathbb{P} \left( \left| \frac{X_0(\gamma_n)}{y_0(\gamma_n)} - 1 \right| > \varepsilon,\; |\gamma_n - \zeta_n| < \delta \right) = 0.
\end{align*}
Hence, for any $\epsilon>0$,
\begin{align*}
    \mathbb{P}(|\hat{\lambda}_0 - \lambda_0| > \epsilon) 
    &\leq \mathbb{P}(|\hat{\lambda}_0 - \lambda_0| > \epsilon,\; |\gamma_n - \zeta_n| < \delta) 
    + \mathbb{P}(|\gamma_n - \zeta_n| \geq \delta) \\
    &= \mathbb{P} \left( \frac{1}{\gamma_n} \left| \log \left( 1 + \left( \frac{X_0(\gamma_n)}{y_0(\gamma_n)} - 1 \right) \right) \right| > \epsilon,\; |\gamma_n - \zeta_n| < \delta \right) 
    + \mathbb{P}(|\gamma_n - \zeta_n| \geq \delta) \\
    &\leq \mathbb{P} \left(  \left| \frac{X_0(\gamma_n)}{y_0(\gamma_n)} - 1 \right| > (\zeta_n - \delta)\epsilon/2,\; |\gamma_n - \zeta_n| < \delta\right) + \mathbb{P}(|\gamma_n - \zeta_n| \geq \delta) \xrightarrow{n \to \infty} 0.
\end{align*}
\\

\noindent\textbf{(4)} \textbf{\boldmath$\hat{\lambda}_1$:} Lastly, we analyze the convergence of the estimator $\hat{\lambda}_1$. Consider the following decomposition:
\begin{align*}
    \hat{\lambda}_1 - \lambda_1 
    &= \frac{1 - \hat{\beta}}{\gamma_n} \log n - \lambda_1 \\
    &= \left( \frac{1 - \hat{\beta}}{\gamma_n} - \frac{1 - \beta}{\gamma_n} \right) \log n 
    + \left( \frac{1 - \beta}{\gamma_n} - \frac{1 - \beta}{\zeta_n} \right) \log n 
    + \left( \frac{1 - \beta}{\zeta_n} \log n - \lambda_1 \right) \\
    &\leq \frac{\log n}{\gamma_n} |\hat{\beta} - \beta| 
    + \left| \frac{\log n}{\gamma_n} - \frac{\log n}{\zeta_n} \right| (1 - \beta) 
    + \left| \frac{1 - \beta}{\zeta_n} \log n - \lambda_1 \right|.
\end{align*}
Since we have established that \( \hat{\beta} \xrightarrow{p} \beta \), and since Proposition~\ref{prop:zeta} and Proposition~\ref{prop:convergence_of_gamma_n} imply \( \gamma_n \xrightarrow{p} \zeta_n \) with \( \zeta_n =\Theta(\log n) \), it follows that each term on the right-hand side converges to $0$ in probability.
Thus, we conclude:
\[
\hat{\lambda}_1 \xrightarrow{p} \lambda_1.
\]

\end{proof} 
\end{appendix}

\newpage
\bibliographystyle{plain}
\bibliography{ref}

\vspace{0.5in}
\end{document}